\documentclass[11pt,a4paper]{amsart}
\usepackage[utf8]{inputenc}
\usepackage[T1]{fontenc}
\usepackage{soul}
\usepackage{amsfonts, amsthm, amsmath,amssymb}
\usepackage{mathrsfs}
\usepackage{orcidlink}
\usepackage{tikz,wasysym,subfigure}
\usetikzlibrary{shapes}
\usetikzlibrary{perspective}
\usepackage{cite,pifont}
\usepackage{hyperref}
\usepackage{paralist, mathdots}
\usepackage{arydshln}
\usepackage{scalerel}
\usepackage{tikz}
\tikzset{every node/.style={draw, circle, inner sep=2pt}}
\usetikzlibrary{shapes.geometric, arrows.meta, positioning}
\usepackage{cancel}
\newcommand{\xmark}{\text{\ding{55}}}

\newtheorem{theorem}{Theorem}[section]
\newtheorem{lemma}[theorem]{Lemma}
\newtheorem{proposition}[theorem]{Proposition}
\newtheorem{corollary}[theorem]{Corollary}

\theoremstyle{definition}
\newtheorem{definition}[theorem]{Definition}

\newtheorem{remark}[theorem]{Remark}
\newtheorem{example}[theorem]{Example}

\newcommand{\trans}{^\top}
\newcommand{\diam}{\operatorname{diam}}
\newcommand{\hdv}{V_{d\geq 3}}
\newcommand{\degnottwo}{V_{d\ne 2}}
\newcommand{\chdv}{\kappa}
\newcommand{\gapzvezdica}{\gap^{*}}
\newcommand{\leaves}{V_{d=1}}
\newcommand{\isolated}{V_{d=0}}
\newcommand{\clover}{\mathrm{clover}}
\newcommand{\cloverNoFour}{{\mathrm{clover}_{\square \mkern-12mu \xmark}}}
\newcommand{\R}{\mathbb{R}}
\newcommand{\bN}{\mathbb{N}}
\newcommand{\mc}{\mathcal}

\newcommand{\stp}{\mathrm{st}_+} 
\newcommand{\C}{\mathscr C}

\DeclareMathOperator{\starc}{star}
\DeclareMathOperator{\gstar}{gstar}
\DeclareMathOperator{\garlic}{garlic}
\DeclareMathOperator{\gap}{gap}
\newcommand{\GnoFourCycle}{{\mc G}_{\square \mkern-12mu \xmark}}
\newcommand{\pathk}[3]{{#1}\; \overset{{#3}}{\text{---}}\; {#2}}

\definecolor{polona}{rgb}{.2,.2,.8}

\definecolor{helena}{rgb}{.2,.8,.4}

\title{Optimization problem for star covers of graphs without four cycles}
\author[Kokol Bukovšek, Oblak,  \v Smigoc]{Damjana Kokol Bukovšek \orcidlink{0000-0002-0098-6784}, Polona Oblak \orcidlink{0000-0002-4876-5163}, Helena \v Smigoc \orcidlink{0000-0002-4523-8181}}

\address[Damjana Kokol Bukovšek]{University of Ljubljana, School of Economics and Business, and Institute of Mathematics, Physics, and Mechanics, Slovenia}
\email{damjana.kokol.bukovsek@ef.uni-lj.si}
\address[Helena \v Smigoc]{School of Mathematics and Statistics, University College Dublin, Ireland}
\email{helena.smigoc@ucd.ie}
\address[Polona Oblak]{University of Ljubljana, Faculty of Computer and Information Science
and Faculty of Mathematics and Physics, Slovenia; Institute of Mathematics, Physics, and Mechanics, Slovenia}
\email{polona.oblak@fri.uni-lj.si}
\date{\today}

\begin{document}

\newcommand{\hugeGamma}{

  \node[vertex] (L_t)  at (0,1)  {};
  \node[vertex] (L_m)  at (0,0.3)  {};
  \node[vertex] (L_b)  at (0,-0.4)    {};
  \node[vertex] (R_tL) at (1,1){};
  \node[vertex] (R_mL) at (1,0.3){};
  \node[vertex] (R_bL) at (1,-0.4)  {};

  \draw[cyanedge] (L_t) -- (L_m) -- (L_b);
  \draw[cyanedge] (R_tL) -- (R_mL) -- (R_bL);
  \draw[cyanedge] (L_m) -- (R_mL);
  \draw[blackedge] (L_b) -- (R_mL);
  \draw[blackedge] (L_m) -- (R_bL);

  \node[vertex] (D_t) at (3,1.0) {};
  \node[vertex] (D_m) at (3,0.3) {};
  \node[vertex] (D_b) at (3,-0.4){};
  \node[vertex] (D_rt) at (4,0.6){};
  \node[vertex] (D_rb) at (4,0){};

  \draw[orangeedge] (D_t) -- (D_rt) -- (D_rb) -- (D_b) -- (D_m) --(D_t) ;
\path (D_m)  edge [bend left,color=orange,thick] (D_t) edge [bend right,color=black,thick]  (D_b);
\path (D_rt)  edge [bend left,color=black,thick] (D_rb);

  \node[vertex] (P_t) at (5,1) {};
  \node[vertex] (P_m) at (5,0.3) {};
  \node[vertex] (P_b) at (5,-0.4){};
  \node[vertex] (P_r) at (5.5,0.7){};  
  
  \draw[tealedge] (P_m) -- (P_r) -- (P_t) -- (P_m) -- (P_b);
  
  \node[vertex] (B0) at (0,2.0) {};
  \node[vertex] (B1) at (1,2.0) {};
  \node[vertex] (B2) at (3,2.0) {};
  \node[vertex] (B3) at (5,2.0) {};
  \tikzset{every loop/.style={min distance=4mm,in=120,out=60,looseness=10}}
\draw[color=magenta] (B3) to[loop above] (B3);
\draw[blackedge] (B2) to[loop above] (B2);
\draw[tealedge] (P_t) to[loop above] (P_t);
\draw[blackedge] (P_r) to[loop above] (P_r);
\draw[orangeedge] (D_t) to[loop above] (D_t);
\tikzset{every loop/.style={min distance=4mm,in=130,out=50,looseness=15}}
\draw[blackedge] (B3) to[loop above] (B3);
\tikzset{every loop/.style={min distance=4mm,in=240,out=300,looseness=10}}
\draw[orangeedge] (D_b) to[loop below] (D_b);
\draw[blackedge] (P_b) to[loop below] (P_b);

  \draw[blackedge] (B0)--(B1) -- (B2) -- (B3);

  \draw[blackedge] (L_t) -- (B0)--(R_tL);
  \draw[blackedge] (L_t) -- (B1)--(R_tL);
  \draw[blackedge] (L_b)--(R_bL);
  
  \draw[blackedge] (D_t) -- (B2);
  \draw[blackedge] (P_t) -- (B3);

  \draw[blackedge] (R_bL) -- (D_b);
  \draw[blackedge] (R_mL) -- (D_m);
  \draw[blackedge] (B2) edge [bend left,color=black,thick] (D_m);
}

\newcommand{\tauGamma}{
 \node[vertex] (H) at (0.5,1.2){};
  
  \node[vertex] (B0) at (0,2.0) {};
  \node[vertex] (B1) at (1,2.0) {};
  \node[vertex] (B2) at (2,2.0) {};
  \phantom{\node[vertex, fill=orange] (C1) at (1,1) {};}
 
 \tikzset{every loop/.style={min distance=4mm,in=240,out=300,looseness=10}}
\draw[blackedge] (B2) to[loop below] (B2);
\draw[blackedge] (H) to [loop below] (H);
  \draw[blackedge] (B1)--(H) -- (B0) -- (B1) -- (B2);

  \node[rectangle,draw=none] at (0.5,0.6) {$w_1$};
  \node[rectangle,draw=none] at (0,2.5) {$w_2$};
  \node[rectangle,draw=none] at (1,2.5) {$w_3$};
  \node[rectangle,draw=none] at (2,2.5) {$w_4$};
}

\newcommand{\tauGammaNoLabel}{
 \node[vertex] (H) at (0.5,1.2){};
  
  \node[vertex] (B0) at (0,2.0) {};
  \node[vertex] (B1) at (1,2.0) {};
  \node[vertex] (B2) at (2,2.0) {};
  \phantom{\node[vertex, fill=orange] (C1) at (1,1) {};}
 
 \tikzset{every loop/.style={min distance=4mm,in=240,out=300,looseness=10}}
\draw[blackedge] (B2) to[loop below] (B2);
\draw[blackedge] (H) to [loop below] (H);
  \draw[blackedge] (B1)--(H) -- (B0) -- (B1) -- (B2);
}

\newcommand{\tauGammaDeg}{
\node[vertex] (H) at (-1,1.2){};
  
  \node[vertex] (B0) at (0,1.2) {};
 \phantom{ \node[vertex] (B1) at (1,0) {};}
  \node[vertex] (B2) at (1,1.2) {};
  
 \tikzset{every loop/.style={min distance=4mm,in=120,out=60,looseness=10}}
\draw[blackedge] (B2) to[loop above] (B2);
 \draw[blackedge] (H) to [loop above] (H);
 
 \draw (B0) edge [bend right,color=green,thick] (H);
 
  \draw[blackedge] (H) -- (B0) --  (B2);

}

\newcommand{\tauGammatauGamma}{
\node[vertex] (H) at (0,1.2){};
   \node[vertex] (B2) at (1,1.2) {};
 
 \tikzset{every loop/.style={min distance=4mm,in=120,out=60,looseness=10}}
\draw[blackedge] (B2) to[loop above] (B2);
 \draw[blackedge] (H) to [loop above] (H);
 
  \draw[blackedge] (H) --  (B2);
  }

\maketitle

\begin{abstract}
This work presents a 
study of star covers on graphs. Unlike traditional formulations that minimize the number of stars, our aim is to optimize the number of bipartite components used in the cover. This problem, motivated by a symmetric nonnegative trifactorization of matrices and the SNT-rank of graphs, is in general hard to solve. We consider a family of graphs that do not contain four cycles, and develop an algorithm to determine the SNT-rank of such graphs.
\end{abstract}

\noindent {\bf Keywords:} 
Graph cover; 
Star Graph;
Weighted multigraph;
Symmetric matrix tri-factorization; 
Nonnegative matrix.

\noindent {\bf AMS subject classification:} 
{05C35, 05C50, 15A23}

\section{Introduction and notation}

\subsection{Motivation and related work}

This work investigates star covers in graphs, in which we aim to minimize the number of bipartite components required to form the cover. While this setup may at first appear highly specialized, it arises naturally from the study of symmetric nonnegative matrix tri-factorizations. 
 
Matrix factorizations that require entrywise nonnegativity on their latent factors can be used as a tool for discovering structures that are inherently additive and physically interpretable. 
Studying the support pattern of a matrix provides  lower bounds on various factorization ranks defined through nonnegative factorization,~\cite{MR3578391,MR1871878, MR3386390}. 
Covering problems extend to broader matrix questions involving ranks and eigenvalues under structural constraints. For example, in the classic inverse eigenvalue problem for graphs, determining the minimum rank of a matrix family is closely connected to its path cover number,~\cite{MR4478249,MR1416462}.

The Nonnegative Matrix Factorization (NMF) is the most common such factorization. In the approximate NMF framework, a given nonnegative matrix $V$ is decomposed into nonnegative factors $W$ and $H$ such that $V \approx WH$,~\cite{1999-Lee-Seung-NMF}, while the exact NMF requires that $V = WH$,~\cite{MR2563025}. The nonnegative rank of a matrix $V$ is the minimal number of columns of $W$ over all such factorizations of $V$. The existence of an exact NMF of rank $k$ for a matrix $V$ requires the existence of a Boolean factorization of rank $r \leq k$ for its support pattern. In particular, the Boolean rank of a matrix’s zero-nonzero pattern serves as a combinatorial lower bound for its nonnegative rank. In addition, Boolean Matrix Factorization (BMF) can inform the construction of Nonnegative Matrix Factorization (NMF) models, when the approximate decomposition requires specific entries to remain zero to preserve the underlying support of the data, and is of independent interest for its capacity to extract interpretable, set-based patterns from discrete data,~\cite{CGP81}. For an overview of all the concepts mentioned above, we refer the reader to \cite{2021-Gillis-book}.

When dealing with symmetric data, this framework is mirrored by Symmetric Nonnegative Matrix Tri-Factorization (SN-Trifactorization).  SN-Trifactorization approximates a symmetric $n \times n$ input matrix $A$ as the product of nonnegative matrices: an $n \times r$ matrix $B$ and a symmetric $r \times r$ matrix $C$, such that $A=BCB\trans$ (for exact factorization), or $A \approx BCB\trans$ (for approximate factorization).  This framework has applications across a variety of data science applications, such as topic modeling and community detection,~\cite{song2019improved}. 
The symmetric nonnegative rank of a matrix $A$, denoted by $\stp(A)$, is defined as the minimum number of columns in $B$ required to satisfy this equality across all such factorizations, see \cite{KBS23}. Analogous to the asymmetric case, a combinatorial lower bound for this rank is provided by the equivalent factorization under Boolean arithmetic. 

 By treating the pattern of $A$ as a graph $G$, we can cast this setup as a combinatorial problem on graphs. This setup was first explored in~\cite{KBS24}, where the \emph{SNT-rank of a graph $G$} is defined as follows. Let $G$ be a graph that allows loops on $n$ vertices with $V(G) = [n]$ and let $\mc S^+(G)$ be the set of all symmetric nonnegative $n \times n$ matrices $A= [a_{ij}]$ with $a_{ij} > 0$ if and only if $\{i, j\} \in E(G)$. We define the SNT-rank of a graph $G$ as
$$\stp(G):=\min\{\stp(A)\colon A \in \mc S^+(G)\}.$$
In this paper, we consider  the gap of the graph $G$,  an equivalent parameter defined as
$$\gap(G):=|V(G)|-\stp(G).$$
If $G$ is an empty graph, then we define $\gap(G)=0$.
Determining the value of $\stp(G)$ reduces to minimizing the number of components required in a specific cover (called set-join cover) of $G$. A formal definition of set-join covers and their components, skipped here for brevity, can be found at the beginning of Section~\ref{Sec:2}. 

 This optimization problem simplifies significantly when restricted to the class of graphs that exclude a specific set of forbidden subgraphs listed in Figure~\ref{fig:forbidden-subgraphs-4-walk}. In such cases, the decomposition requires only the simplest form of set-joins—those corresponding to star graphs.  While our approach uses star covers, we emphasize an essential difference with more common optimization problem:  rather than minimizing the total number of stars, we focus on minimizing the number of components required to achieve the cover. This difference is highlighted on a small graph in our first example below.

\begin{example}\label{ex:W5}
    Let $W_5$ be a wheel graph on five vertices as shown in Figure~\ref{fig:wheel-5}. We highlight four edge covers of this graph, each satisfying a different optimization criterion. 

\begin{figure}[h]
\centering

\subfigure[${\mathscr C}_1$]{\begin{tikzpicture}\label{fig:W5-C1}
 \node[fill=white] (5) at (0,0){};
 \node[fill=white] (1) at (-1,-1){};
 \node[fill=white] (2) at (1,-1){};
 \node[fill=white] (3) at (1,1){};
 \node[fill=white] (4) at (-1,1){};
 \draw[thick,red] (1)--(2)--(3)--(4)--(1);
 \draw[thick,blue] (3)--(5)--(1);
 \draw[thick,blue] (2)--(5)--(4);
 \node[rectangle,draw=none] at (-1.25,-1.25) {\small $1$};
  \node[rectangle,draw=none] at (1.25,-1.25) {\small $2$};
   \node[rectangle,draw=none] at (1.25,1.25) {\small $3$};
  \node[rectangle,draw=none] at (-1.25,1.25) {\small $4$};
   \node[rectangle,draw=none] at (0,0.3) {\small $5$};
\end{tikzpicture}}
\subfigure[${\mathscr C}_2$]{\begin{tikzpicture}\label{fig:W5-C2}
 \node[fill=gray!70] (5) at (0,0){};
 \node[fill=white] (1) at (-1,-1){};
 \node[fill=black] (2) at (1,-1){};
 \node[fill=white] (3) at (1,1){};
 \node[fill=black] (4) at (-1,1){};
 \draw[thick,red] (1)--(2)--(3)--(4)--(1);
 \draw[thick,blue] (3)--(5)--(1);
 \draw[thick,green] (2)--(5)--(4);
 \node[rectangle,draw=none] at (-1.25,-1.25) {\small $1$};
  \node[rectangle,draw=none] at (1.25,-1.25) {\small $2$};
   \node[rectangle,draw=none] at (1.25,1.25) {\small $3$};
  \node[rectangle,draw=none] at (-1.25,1.25) {\small $4$};
   \node[rectangle,draw=none] at (0,0.3) {\small $5$};
\end{tikzpicture}}
\subfigure[${\mathscr C}_3$]{\begin{tikzpicture}\label{fig:W5-C3}
 \node (5) at (0,0){};
 \node (1) at (-1,-1){};
 \node (2) at (1,-1){};
 \node (3) at (1,1){};
 \node (4) at (-1,1){};
 \draw[thick,red] (2)--(5);
 \draw[thick,red] (1)--(2)--(3); 
 \draw[thick,blue] (3)--(4)--(1);
 \draw[thick,blue] (5)--(4);
 \draw[thick,green] (1)--(5)--(3);
 \node[rectangle,draw=none] at (-1.25,-1.25) {\small $1$};
  \node[rectangle,draw=none] at (1.25,-1.25) {\small $2$};
   \node[rectangle,draw=none] at (1.25,1.25) {\small $3$};
  \node[rectangle,draw=none] at (-1.25,1.25) {\small $4$};
   \node[rectangle,draw=none] at (0,0.3) {\small $5$};
\end{tikzpicture}}
\subfigure[${\mathscr C}_4$]{\begin{tikzpicture}\label{fig:W5-C4}
 \node[fill=gray!40] (5) at (0,0){};
 \node[fill=white] (1) at (-1,-1){};
 \node[fill=black] (2) at (1,-1){};
 \node[fill=gray!70] (3) at (1,1){};
 \node[fill=black] (4) at (-1,1){};
 \draw[thick,red](4)--(1)--(2);
 \draw[thick,blue] (4)--(3)--(2);
 \draw[thick,green] (4)--(5)--(2);
 \draw[thick,orange] (3)--(5);
 \draw[thick,yellow] (1)--(5);
 \node[rectangle,draw=none] at (-1.25,-1.25) {\small $1$};
  \node[rectangle,draw=none] at (1.25,-1.25) {\small $2$};
   \node[rectangle,draw=none] at (1.25,1.25) {\small $3$};
  \node[rectangle,draw=none] at (-1.25,1.25) {\small $4$};
   \node[rectangle,draw=none] at (0,0.3) {\small $5$};
\end{tikzpicture}}
\caption{Wheel graph $W_5$.}\label{fig:wheel-5}
\end{figure}

For sets of vertices $\mc S_1$ and $\mc S_2$,  we use the notation $\mc S_1\vee \mc S_2$ 
to denote the graph 
with edges $\{\{i_1,i_2\}\colon i_1 \in \mc S_1, i_2 \in \mc S_2\}$, and we say that $\mc S_1$ and $\mc S_2$ are the components of this bipartite graph. 

\begin{enumerate}[$(1)$]
\item The cover $\C_1=\{\{1,3\}\vee \{2,4\}, \{1,2,3,4\}\vee \{5\}\}$ consists of two complete bipartite graphs that together use four different components. This cover is constructed to minimize the number complete bipartite graphs needed to cover all edges of $W_5$. See Figure~\ref{fig:W5-C1}.
\item The cover $\C_2=\{\{1,3\}\vee \{2,4\}, \{1,3\}\vee \{5\}, \{2,4\}\vee \{5\}\}$ consists of three complete bipartite graphs and is constructed to minimize the number of components in any cover of $W_5$ with complete bipartite graphs. Note, the components used in $\C_1$ are  $\{1,3\}$, $\{2,4\}$ and $\{5\}$, hence the resulting minimum value of components needed is three. See~Figure \ref{fig:W5-C2}. 
\item  The cover $\C_3=\{\{4\}\vee \{1,3,5\}, \{2\}\vee \{1,3,5\}, \{5\}\vee \{1,3\}\}$ consists of three stars that that use five components, and minimizes the number of stars in any star cover of $W_5$. 
\item The cover $\C_4=\{\{1\}\vee \{2,4\}, \{1\}\vee \{5\}, \{2,4\}\vee \{3\}, \{2,4\}\vee \{5\}, \{3\}\vee \{5\}\}$ consists of five stars and is minimizing the number of components in any star cover of $W_5$. Note, the cover uses four components, $\{1\}$, $\{3\}$, $\{5\}$, and $\{2,4\}$. See Figure~\ref{fig:W5-C4}. 
\end{enumerate} 

 The cover $\C_1$ is an optimal cover of graph $W_5$ with bipartite graphs, see \cite{MR2519293}. The cover $\C_2$ is an optimal set-join cover of graph $W_5$. As proven in  \cite[Theorem 2.12]{KBS24} this means $\stp(W_5) = 3$. The cover $\C_3$ is an optimal star cover of the graph $W_5$, which means that $\starc(W_5) = 3$. In this paper, we will study covers as in item (4). We will show in Proposition~\ref{prop:gen4cycle} that for a graph $G$ that does not contain forbidden subgraphs, every set-join cover is a star cover, so minimizing the number of components in a star cover computes the SNT-rank of a graph $G$.   
\end{example}

\subsection{Structure of the paper}
We organize the paper as follows. Sections~\ref{Sec:2} and~\ref{sec:3} develop several observations concerning $\gap$: Section~\ref{Sec:2} treats the general case, while Section~\ref{sec:3} focuses on graphs that exclude certain $4$-walk configurations. In particular, Lemma~\ref{lem:P4toP2} shows that, in this restricted setting, $\gap(G)$ depends only on the parity of path lengths in $G$.
Motivated by this result, Section~\ref{sec:definition-kappa} introduces a weighted multigraph $\kappa(G)$ with binomial edge weights, together with the parameter $\gapzvezdica(\kappa(G))$, which coincides with $\gap(G)$.
In Sections~\ref{sec:operations-preserve-gap*} and~\ref{sec:tau}, we study general weighted multigraphs $\Gamma$ with binomial weights and present several graph operations that preserve $\gapzvezdica(\Gamma)$. These operations lead to an algorithm that reduces any such multigraph $\Gamma$ to a weighted simple graph $\tau(\Gamma)$ (possibly with loops), in which all edge weights are zero and all vertex degrees are at least three, while maintaining control over $\gapzvezdica$.
Since $\gapzvezdica(\tau(\Gamma))$ may still be impossible to compute directly, Section~\ref{sec:vtx-removal} introduces an alternative method based on the sequential removal of vertices. The paper concludes with several illustrative examples in Section~\ref{sec:app}.

\subsection{Definitions and notation}
The term graph in this paper will always mean a simple graph $G=(V(G), E(G))$ that allows loops.  Edges $e\in E(G)$ are subsets of $V(G)$ of the form $e=\{u,v\}$.  If the edge is of the form $\{u,u\}$, it means that a vertex $u$ has a loop.
Below, we list some basic notions associated with graphs that we will depend on:
\begin{itemize}
\item By $G \cong H$ we denote that graphs $G$ and $H$ are isomorphic. 
\item Let $P_n$ denote the path on $n$ vertices, $C_n$ a cycle on $n$ vertices, $K_n$ a complete graph on $n$ vertices and $K_{m,n}$ a complete bipartite graph on $m+n$ vertices, all without loops.
\item Vertices $u, v \in V(G)$ are \emph{neighbours} if $\{u,v\} \in E(G)$.
 The neighbourhood of $v$ in $G$, denoted by $N_G[v]$, is the set of all neighbours of $v$. Note that vertex $v \in V(G)$ has a loop if and only if $v \in N_G[v]$.
\item Vertices $u$ and $v$ are \emph{twin vertices} if $N_G[u]=N_G[v]$.
\item If a vertex $v \in V(G)$ does not have a loop, then the \emph{degree of $v$} in $G$ equals the cardinality of its neighbourhood, i.e., $\deg_G(v) = |N_G[v]|$. If a vertex $v \in V(G)$ has a loop, then $\deg_G(v) = |N_G[v]| + 1$, i.e., loops count twice towards the degree.
\item A vertex $v\in V(G)$ is called a \emph{leaf} if $|N_G[u]|=1$ and is called \emph{a high degree vertex} if $\deg_G(v) \geq 3$.  We denote by $\hdv(G)$ the set of all high degree vertices of $G$, by $V_{d=2}(G)$ the set of all degree two vertices, by $\leaves(G)$ the set of all leaves, by  $V_{d=0}(G)$ the set of all vertices of degree $0$, and by  $\degnottwo(G)=V(G)\setminus V_{d=2}(G)$.
\item Removing a set of vertices $\{v_1, \ldots, v_k\}$ from a simple graph $G$ is a well-known operation in which we remove all vertices $v_i$, $i\in [k]$ together with all edges $\{v_i,u\}$, $u\in N_{G}[v_i]$, $i\in [k]$. We denote this graph as $G(\{v_1, \ldots, v_k\})$.
When convenient, we write $G(v_1,\ldots,v_k)$ instead of $G(\{v_1,\ldots,v_k\})$.
Furthermore, the subgraph of $G$ induced on the set $\{v_1, \ldots, v_k\}$ is denoted by $G[\{v_1, \ldots, v_k\}]$.
\item 
Let $k\in \bN_0$. If for $u,v\in V(G)$ and for distinct vertices $x_i\in V_{d=2}(G)\setminus\{u,v\}$, $i\in [k]$, there exists a walk $u-x_1-\ldots-x_k-v$, we abbreviate it by $\pathk{u}{v}{k}$. 
  
\item If $G$ contains $w-\pathk{u}{v}{k}$ for  $k\geq 0$, $w\in \hdv(G)$, $u\in V_{d=2}(G)$ and  $v\in \leaves(G)$, then we call  $\pathk{u}{v}{k}$ a pendent path $P_{k+2}$ at $w$.

\end{itemize}

We follow~\cite{2010-Diestel-Graph-Theory} and consider undirected \emph{multigraphs} $\Gamma=(V(\Gamma), E(\Gamma))$, in which to every $e\in E(\Gamma)$ we assign one or two endpoints from the set of vertices $V(\Gamma)$. 
Note that this definition allows several edges between a pair of distinct vertices and several loops at vertex $u$, which we denote as $\{u,u\}$. If a multigraph $\Gamma$ is \emph{weighted}, then $\Gamma=(V(\Gamma),E(\Gamma),w_{\Gamma})$, where $w_{\Gamma}\colon E(\Gamma)\to \R$ is a function that assigns a weight to each edge. 

If a vertex of a (weighted) multigraph is incident to $s$ loops and  $t$ non-looped edges, then we define its {\emph degree} as $\deg_{\Gamma}(v)=t+2s$. (For example, 
$\deg_{\Gamma}(w)=6$ and $\deg_{\Gamma}(v)=4$ for vertices $w,v\in V(\Gamma)$ in a weighted multigraph $\Gamma$ shown in Figure~\ref{fig:removing-vtx}.)

The \emph{edge contraction along an edge} $e\in E(\Gamma)$ with different endpoints $u$ and $v$  is a graph obtained from $\Gamma$ by identifying vertices $u$ and $v$ (into a single vertex), deleting the edge $e$ and preserving all other incidences (and weights in the case of a weighted multigraph).

Other definitions from simple graphs to multigraphs extend in a natural way.

\section{Initial results on gap}\label{Sec:2}

Before limiting our analysis to graphs without forbidden subgraphs, we offer a few preliminary results on the gap of graphs satisfying a local assumption.

SNT-rank of a graph $G$ can be described combinatorially using set-join covers of the graph. Let $\mc K$ and $\mc L$ be two nonempty subsets of $V(G)$ with possibly nonempty intersection. We define \emph{the set-join of $\mc K$ and $\mc L$}, denoted by $\mc K \vee \mc L$, to be the graph with $V(\mc K \vee \mc L)=V(G)$, and $E(\mc K \vee \mc L)=\{\{i,j\}\colon i \in \mc K, j \in \mc L\}$. 
Note that $\mc K \vee \mc L$ has a loop on vertex $i$ precisely when $i \in \mc K \cap \mc L$. We say that $\C=\{\mc K_i \vee \mc L_i\colon i \in [t]\}$ is \emph{a set-join cover} of $G$ if  
$E(G)=\bigcup_{i=1}^t E(\mc K_i \vee \mc L_i)$. 
For a set-join cover $\C$ of graph  $G$ {the component set} of $\C$ is $$V(\C)=\{\mc K_i\colon i \in [t]\}\cup \{\mc L_i\colon i \in [t]\}$$
and we call each element of $V(\C)$ \emph{a component of $\C$}. \emph{The order} of $\C$ is the number of components of $\C$, i.e.~$|V(\C)|$.
In \cite[Theorem 2.12]{KBS24} it is proven that $$\stp(G)=\min\{|V(\C)|\colon \C \text{ a set-join cover of } G\}.$$
If $\C$ is a set-join cover of $G$ of order $\stp(G)$, then we say that $\C$ is an \emph{optimal set-join cover} of $G$.

We recall selected results from \cite{KBS24}, needed in the sequel. While these were originally stated in terms of $\stp(G)$, we restate them here using $\gap(G)$.

\begin{proposition}\label{prop:from KBS24} Let $G$ be a simple graph. 
    \begin{enumerate}[$(1)$]
        \item \cite[Lemma 3.3]{KBS24} If $v\in V(G)$ is a leaf and $H=G(\{v,w\})$ the graph obtained from $G$ by deleting $v$ and its unique neighbour $w$, then $\gap(G)=\gap(H)$. \label{prop:2.1(1)}
        \item \cite[Proposition 2.28]{KBS24} If $u, u' \in V(G)$ are twin vertices, then $\gap(G)=\gap(G(u)) + 1.$ \label{prop:2.1(2)}
        \item \cite[Example 3.4]{KBS24} $\gap(P_n) = 1$ for odd $n$ and $\gap(P_n) = 0$ for even $n$. \label{prop:2.1(3)}
        \item \cite[Proposition 3.10]{KBS24} $\gap(C_4) = 2$ and $\gap(C_n) = 0$ for $n\ne 4$. \label{prop:2.1(4)}
    \end{enumerate}
\end{proposition}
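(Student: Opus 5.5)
This proposition collects four results already proved in \cite{KBS24}: Lemma 3.3, Proposition 2.28, Example 3.4 and Proposition 3.10. So the proof only has to translate each statement from $\stp$ to $\gap$, using $\gap(G)=|V(G)|-\stp(G)$ and, for explicit values, the characterisation $\stp(G)=\min\{|V(\C)|\}$ over set-join covers. I will handle the items in order.

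For item~(1), the cited lemma gives $\stp(G)=\stp(H)+2$ when $H$ is obtained by deleting a leaf $v$ and its neighbour $w$. Since $|V(H)|=|V(G)|-2$, subtracting yields $\gap(G)=\gap(H)$. If $H$ is empty, the convention $\gap=0$ is consistent, as in $G=P_2$.

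For item~(2), the cited result gives $\stp(G)=\stp(G(u))$ for twins $u,u'$. The reason is that any optimal cover of $G(u)$ extends to $G$ by adding $u$ to every component that contains $u'$, which creates no new components. Since $|V(G(u))|=|V(G)|-1$, this gives $\gap(G)=\gap(G(u))+1$.

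For item~(3), I would argue by induction using item~(1). Deleting an end leaf of $P_n$ together with its neighbour leaves $P_{n-2}$, so $\gap(P_n)=\gap(P_{n-2})$. The base cases are $\gap(P_1)=1-0=1$, since an isolated vertex needs no components, and $\gap(P_2)=2-2=0$.

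For item~(4), for $C_4\cong K_{2,2}$ the single set-join $\{1,3\}\vee\{2,4\}$ gives $\stp\le2$. Two components are also necessary, since at least one edge must be covered, so $\gap(C_4)=2$. For $n\ne4$ the lower bound $\stp(C_n)\ge n$ is the nontrivial part, and I would simply quote \cite[Proposition~3.10]{KBS24}.

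The only step with real content is that lower bound, which the citation supplies. Everything else is bookkeeping.
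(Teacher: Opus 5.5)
Your proposal is correct and takes the same approach as the paper, which likewise quotes the four results from \cite{KBS24} and rewrites them using $\gap(G)=|V(G)|-\stp(G)$; your added sketches (extending a cover past a twin, induction on $P_n$ via item~(1), and the cover $\{1,3\}\vee\{2,4\}$ of $C_4$) are all consistent with this. One small wording fix: $\stp(C_4)\ge 2$ holds not merely because some edge must be covered, but because a one-component cover $\mathcal K\vee\mathcal K$ would put loops on the vertices of $\mathcal K$, and $C_4$ has no loops.
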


\begin{remark}\label{thm:tree-max-st+}
Item (\ref{prop:2.1(1)}) in Proposition \ref{prop:from KBS24} reduces the problem of determining the gap for graphs that contain a leaf to a smaller graph, and gives an algorithmic way to compute the gap for trees. 
 If $T$ is a tree with at least two vertices, then repeated application of this result can be used to prove that $\gap(T)=0$ if and only if $T$ is obtained from $P_2$ by consecutively adding a pendent  $P_2$ to any vertex in the existing graph.

To verify this, assume first that $T$ is obtained from $P_2$ by consecutively adding a pendent  $P_2$ to a vertex. Since $\gap(P_2)=0$ by Proposition~\ref{prop:from KBS24}~\eqref{prop:2.1(3)}, it follows by Proposition~\ref{prop:from KBS24}~\eqref{prop:2.1(1)} that $\gap(T)=0$.

Suppose that $\gap(T)=0$. If $|V(T)|=2$, then $T=P_2$ and the statement follows by Proposition~\ref{prop:from KBS24}~\eqref{prop:2.1(3)}. Assume now that $|V(T)|\geq 3$ and that all trees on at most $|V(T)|-1$ vertices of gap $0$ are obtained from $P_2$ by consecutively adding a pendent  $P_2$ to a vertex. Let $v$ and $w$ be two leaves of $T$, such that the distance between them is equal to the diameter of $T$, and let $v_1$ be the unique neighbor of $v$. We claim that $\deg_T(v_1)=2$. Indeed, $\deg_T(v_1)\geq 3$ and the assumption that $T$ does not contain any cycles, implies that there exists $v'\in N_T[v_1]$, $v'\neq v$, with $d(v',w)=d(v,w)=\diam(T)$. 
Note, $v'$ is not a leaf, since the assumptions that $\gap(T)=0$ implies that $T$ has no twin vertices, by Proposition~\ref{prop:from KBS24}~\eqref{prop:2.1(2)}. Therefore there exists a vertex $v_0\in N_T[v']$, such that $d(v_0,w)=d(v',w)+1=\diam(T)+1$, a contradiction. 

In particular there exists a vertex $v_2$ so that $N_T[v_1]=\{v,v_2\}$ and $v-v_1$ is $P_2$ a pendent  path at $v_2$ in $T$. 
By item (\ref{prop:2.1(1)}) in Proposition \ref{prop:from KBS24}, $T(\{v,v_1\})$ is a tree on $|V(T)|-2$ vertices with $\gap(T(\{v,v_1\}))=0$. By the induction hypothesis, $T(\{v,v_1\})$ is obtained from $P_2$ by consecutively adding to a vertex a pendent  $P_2$.  Hence, $T$ is obtained in the same way. 
\end{remark}

In the proposition below, we restrict our attention to graphs that do not contain a certain closed $4$-walk. This assumption will also be adopted in subsequent sections.

\begin{proposition}\label{prop:G1G2}
Let $G$ be a graph containing two vertex disjoint induced subgraphs $G_1$ and $G_2$ so that $V(G)=V(G_1)\cup V(G_2)$, and there do not exist vertices $u_1, v_1 \in V(G_1)$ and $u_2, v_2 \in V(G_2)$ with and $\{u_i,v_j\} \in E(G)$ for all $i,j\in\{1,2\}$. 
Then 
 $$\gap(G) \leq \gap(G_1)+\gap(G_2).$$ 

In particular, if $\gap(G_i)=0$ for $i=1,2$, then $\gap(G)=0$.
\end{proposition}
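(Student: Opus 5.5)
The inequality $\gap(G)\le \gap(G_1)+\gap(G_2)$ is equivalent to $\stp(G)\ge \stp(G_1)+\stp(G_2)$, since $|V(G)|=|V(G_1)|+|V(G_2)|$. By \cite[Theorem 2.12]{KBS24} it suffices to show the following: from an optimal set-join cover $\C=\{\mc K_i\vee \mc L_i\colon i\in[t]\}$ of $G$ we can build set-join covers $\C_1$ of $G_1$ and $\C_2$ of $G_2$ with $|V(\C_1)|+|V(\C_2)|\le |V(\C)|$. Write $V_1=V(G_1)$ and $V_2=V(G_2)$. The natural construction is restriction. For each join $\mc K\vee\mc L\in\C$ and each side $s\in\{1,2\}$ with $\mc K\cap V_s\neq\emptyset$ and $\mc L\cap V_s\neq\emptyset$, we put $(\mc K\cap V_s)\vee(\mc L\cap V_s)$ into $\C_s$.

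\emph{$\C_s$ covers $G_s$.} Since $G_s$ is induced, every edge of $G_s$ is an edge of $G$. Such an edge is covered by some $\mc K\vee \mc L$ with one endpoint in each set, so it is covered by the restricted join. Conversely, no restricted join creates a non-edge, because it is contained in $\mc K\vee\mc L$. Hence $\C_s$ is a set-join cover of $G_s$.

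\emph{Key step: mixed components.} Call a component $\mc K\in V(\C)$ \emph{mixed} if it meets both $V_1$ and $V_2$. The main obstacle is that a mixed component could a priori produce a component in both $\C_1$ and $\C_2$, which would break the count. I claim that all join partners of a mixed $\mc K$ lie entirely within a single side.
\begin{itemize}
\item Take $u_1\in\mc K\cap V_1$ and $u_2\in \mc K\cap V_2$.
\item Suppose a partner $\mc L$ of $\mc K$ contains $v_1\in V_1$, and a partner $\mc L'$ of $\mc K$ (possibly $\mc L'=\mc L$) contains $v_2\in V_2$.
\item Then $\{u_i,v_j\}\in E(\mc K\vee\mc L)\cup E(\mc K\vee\mc L')\subseteq E(G)$ for all $i,j\in\{1,2\}$, which the hypothesis forbids.
\end{itemize}
Applying this with $\mc L=\mc K$ shows that a mixed component is never joined to itself. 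Consequently, every partner of a mixed $\mc K$ lies in $V_1$ only, or every partner lies in $V_2$ only. So a mixed $\mc K$ generates a restricted component on exactly one side, namely $\mc K\cap V_s$ for the side $s$ containing its partners. Its other part only meets cross edges between $V_1$ and $V_2$, which need not be covered.

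\emph{Counting.} Define a map from $V(\C)$ onto $V(\C_1)\sqcup V(\C_2)$ (disjoint union, so a set that occurs in both covers counts twice). A non-mixed $\mc K\subseteq V_s$ is sent to $\mc K$ in $\C_s$, if it occurs there. A mixed $\mc K$ is sent to $\mc K\cap V_s$, where $s$ is the side of its partners. By construction every component of $\C_1$ and of $\C_2$ has the form $\mc K\cap V_s$ for a component $\mc K$ that is sent to it, so the map is surjective. Therefore
\[
|V(\C_1)|+|V(\C_2)|\le |V(\C)|=\stp(G),
\]
and since $|V(\C_s)|\ge\stp(G_s)$, we obtain $\stp(G_1)+\stp(G_2)\le \stp(G)$.

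Subtracting from $|V(G)|=|V(G_1)|+|V(G_2)|$ gives the inequality. The particular case follows because $\gap\ge 0$: indeed $\stp(G)\le|V(G)|$, as the stars $\{v\}\vee N_G[v]$ give a cover of order at most $|V(G)|$.
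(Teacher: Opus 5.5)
Your proof is correct and is essentially the paper's argument. The paper likewise takes, inside an optimal cover, the joins covering an edge of $G_1$, resp.\ $G_2$, and uses the same forbidden four-edge configuration to show that no component can serve both sides; your explicit restriction-and-surjection count just spells out the paper's ``clearly $|V(\C_i)|\ge\stp(G_i)$''.

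The only slip is in your side remark justifying $\gap\ge 0$. The star cover $\{\{v\}\vee N_G[v]\}$ can have up to $2|V(G)|$ components; for $P_3$ it has $4$. Use instead the cover $\{\{u\}\vee\{v\}\colon \{u,v\}\in E(G)\}$, all of whose components are singletons, so its order is at most $|V(G)|$.
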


\begin{proof}
Let $\C$ be an optimal set-join cover of $G$, and let $\C_i \subseteq \C$, $i=1,2$, be the subset of $\C$ containing all the elements of $\C$ that cover at least one edge in $G_i$. Clearly, $|V(\C_i)| \geq \stp(G_i)$. 

We claim that $V(\C_1) \cap V(\C_2)=\emptyset$. Assuming the opposite, let $\mc K \in V(\C_1) \cap V(\C_2)$. Then for $i=1,2$: there exists  $u_i \in \mc K \cap V(G_i)$ and a component $\mc L_i \in V(\C_i)$ so that $\mc K \vee \mc L_i \in \C_i$. In particular, there exist $v_i \in \mc L_i \cap V(G_i)$.  From $\mc K \vee \mc L_i \in \C$ we conclude that $\{u_i,v_j\} \in E(G)$ for all $i,j=1,2$, a contradiction. 

Since $V(\C_1) \cup V(\C_2) \subseteq V(\C)$, this implies $\stp(G) \geq \stp(G_1)+\stp(G_2)$ and thus also $\gap(G) \leq \gap(G_1)+\gap(G_2)$.
\end{proof}

\begin{example} 
Note that the inequality in Proposition~\ref{prop:G1G2} can be strict. Let $G = P_6$ with vertices $v_1, ..., v_6$ in the natural cycle order. Let $G_1$, $G_2$ be induced subgraphs with $V(G_1) = \{v_1, v_2, v_3\}$ and $V(G_2) = \{v_4, v_5, v_6\}$. Then $G_1$ and $G_2$ are both isomorphic to $P_3$, so $\gap(G_1) = \gap(G_2) =1$, and $\gap(G) = 0$.
\end{example}

\begin{corollary} \label{cor:remove-C2k}
 Let $G$ be a graph containing an induced subgraph $H$ isomorphic to $C_{2k}$, $k \geq 3$, where vertices $V(H)=\{v_1,v_2,\ldots,v_{2k}\}$ are indexed in a natural cycle order, and $\{v_2,v_4,\ldots,v_{2k}\} \subseteq V_{d=2}(G)$. 
Then $$\gap(G)=\gap(G(V(H))).$$ 
\end{corollary}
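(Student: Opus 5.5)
Two inequalities are needed: $\gap(G)\le\gap(G(V(H)))$ and $\gap(G)\ge\gap(G(V(H)))$. Write $G'=G(V(H))$. For the upper bound I will apply Proposition~\ref{prop:G1G2} with $G_1=H$ and $G_2=G'$. These are vertex disjoint induced subgraphs covering $V(G)$, and $\gap(H)=\gap(C_{2k})=0$ by Proposition~\ref{prop:from KBS24}\eqref{prop:2.1(4)} since $2k\ge 6$. It remains to check the forbidden configuration: there should be no $u_1,v_1\in V(H)$ and $u_2,v_2\in V(G')$ with $\{u_i,v_j\}\in E(G)$ for all $i,j$.

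Suppose such vertices exist. Each even-indexed $v_{2j}$ has degree two in $G$, and both its neighbours lie in $H$, so it has no neighbour in $G'$. Hence $u_1,v_1$ are odd-indexed. (If $u_1=v_1$, then this vertex must be adjacent to $u_2$ and $v_2$, which is fine in principle; I will need to check how the definition treats $u_1=v_1$.) Similarly $u_2,v_2\in V(G')$ are both adjacent to $u_1$ and to $v_1$.

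In the case $u_1\ne v_1$ and $u_2\ne v_2$, this gives a $4$-cycle $u_1u_2v_1v_2$ through odd cycle vertices. I do not yet see a contradiction with the hypotheses, so I expect the upper bound route to need a direct argument rather than Proposition~\ref{prop:G1G2}. The proposition's hypothesis is probably meant to hold in the paper's setting, but the corollary as stated only assumes degree-two even vertices.

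So, as an alternative for the upper bound, I plan a direct construction. Take an optimal set-join cover $\C'$ of $G'$ and add the stars $\{v_{2j}\}\vee\{v_{2j-1},v_{2j+1}\}$ for $j=1,\dots,k$. These stars cover every edge of $H$ and every edge between $H$ and $G'$? No: edges from odd vertices $v_{2j+1}$ to $G'$ are not covered. To cover them, I would replace each such star with $\{v_{2j}\}\vee\{v_{2j+1}\}$ and add stars $\{v_{2j+1}\}\vee(\text{its neighbours in } G' \cup\{v_{2j},v_{2j+2}\})$. The new components are the $2k$ singletons $\{v_i\}$ plus possibly the new neighbour sets, which would cost more than $2k$.

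The better approach is to keep only singleton components for the cycle vertices. For the $G'$-neighbours of $v_{2j+1}$, add $\{v_{2j+1}\}\vee\{x\}$, where $\{x\}$ should already be a component of $\C'$; that is not guaranteed. This shows that the upper bound via construction is the real obstacle, and that Proposition~\ref{prop:G1G2} is the intended tool. I will therefore verify its hypothesis carefully, and handle $u_1=v_1$ (a single odd vertex with two $G'$-neighbours) by checking that the proposition's proof only uses a component meeting both sides, which then forces the cross adjacency.

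For the lower bound $\gap(G)\ge\gap(G')$, I need $\stp(G)\le\stp(G')+2k$. I will take an optimal cover $\C'$ of $G'$ and build a cover of $G$ whose new components are the $k$ singletons $\{v_{2j}\}$ and the $k$ sets $S_j=\{v_{2j+1}\}$. It uses the stars $\{v_{2j}\}\vee\{v_{2j\pm1}\}$, and for each odd $v$ the star $\{v\}\vee N_{G'}(v)$. The last star adds the set $N_{G'}(v)$ as a component, which may be too costly. To fix this, I will instead enlarge components of $\C'$: for each component $\mc K\in V(\C')$, add $v$ to the partner... This is also delicate.

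The simplest fix is to pair each odd vertex with its even neighbour. Use components $\{v_{2j}\}$ and $\{v_{2j-1}\}\cup N_{G'}(v_{2j-1})$? That would create wrong edges.

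I expect this lower bound to be the main obstacle. My first attempt will be to use exactly $2k$ new components, namely the singletons $\{v_i\}$ for all $i$, together with the stars $\{v_i\}\vee\{v_{i+1}\}$, and to cover the edges from odd $v$ to $x\in G'$ by $\{v\}\vee\mc L$, where $\mc L\in V(\C')$ contains $x$ and satisfies $\mc L\subseteq N(v)$. If no such $\mc L$ exists, I will fall back on the induction via twin or leaf reductions in Proposition~\ref{prop:from KBS24}.
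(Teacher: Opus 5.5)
Your proposal has a genuine gap in each direction.

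\textbf{The bound $\gap(G)\le\gap(G(V(H)))$.} You chose the right tool: Proposition~\ref{prop:G1G2} with $G_1=H$ and $G_2=G(V(H))$, which is exactly the paper's argument. However, you misread its hypothesis and so never verified it. The condition ``$\{u_i,v_j\}\in E(G)$ for all $i,j\in\{1,2\}$'' includes the pairs with $i=j$. The forbidden configuration is therefore the closed $4$-walk $u_1-v_1-u_2-v_2-u_1$, with one edge $\{u_1,v_1\}$ inside $G_1$, one edge $\{u_2,v_2\}$ inside $G_2$, and two cross edges. This is what the proof of the proposition produces, with $u_1,u_2\in\mc K$ and $v_i\in\mc L_i$. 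It is not a complete bipartite $K_{2,2}$ between $\{u_1,v_1\}$ and $\{u_2,v_2\}$.

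With the correct reading you already have everything you need. Both $u_1$ and $v_1$ have a neighbour in $G(V(H))$, so both are odd-indexed. Yet $\{u_1,v_1\}$ is an edge of the induced loopless cycle $H$, so one endpoint is even-indexed, a contradiction. The case $u_1=v_1$ would require a loop on $H$, which does not exist.

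Note also that your ``direct construction for the upper bound'' points the wrong way. Exhibiting a cover of $G$ bounds $\stp(G)$ from above, and hence bounds $\gap(G)$ from below.

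\textbf{The bound $\gap(G)\ge\gap(G(V(H)))$,} i.e.\ $\stp(G)\le\stp(G(V(H)))+2k$. Here you never reach a valid construction. Insisting on singleton components for all $2k$ cycle vertices leaves no budget for the edges from odd vertices into $G(V(H))$. Your final plan needs a component $\mc L\subseteq N_G[v]$ of the optimal cover of $G(V(H))$, which need not exist, and the fallback is only vague.

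The missing idea is that even vertices need no components of their own. Add to an optimal cover of $G(V(H))$ the $k$ stars $\{v_{2t-1}\}\vee N_G[v_{2t-1}]$, $t\in[k]$. These introduce at most $2k$ new components: the singletons $\{v_{2t-1}\}$ and the full neighbourhoods, which already contain the even neighbours $v_{2t-2},v_{2t}$. They cover every edge at an odd vertex. They therefore also cover every edge at an even vertex, since even vertices have only odd neighbours. This is the paper's construction.
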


\begin{proof}
  Let $W=V(H)$. Observe first that  we have $\gap(G)\le \gap(G(W))+\gap(H)=\gap(G(W))$ since $H$ and $G(W)$ satisfy the hypothesis of Proposition~\ref{prop:G1G2}.
Now, let $\C_W$ be an optimal set-join cover of $G(W)$, and $\C_1:=\{\{v_{2t-1}\}\vee N_G[v_{2t-1}]\colon t\in[k]\}$. Since $\C:=\C_W\cup \C_1$ is a set-join cover of $G$ with $|V(\C)| = |V(\C_W)|+|V(\C_1)|=\stp(G(W))+2k=|V(G)|-\gap(G(W))$, it follows that $\gap(G)\ge \gap(G(W))$.
\end{proof}

\section{On graphs without four cycles}\label{sec:3}

Now we limit our investigation to the family of graphs described in the following definition. 
\begin{definition}
By $\GnoFourCycle$ we denote the family of all graphs $G$ 
with no 
vertices $u_1, u_2, v_1, v_2 \in V(G)$, $u_1 \ne u_2$ and $v_1 \ne v_2$, such that $\{u_i, v_j\} \in E(G)$ for all $i,j \in \{1,2\}$. 
\end{definition}

The family $\GnoFourCycle$ can also be characterised with the set of forbidden subgraphs given in Figure~\ref{fig:forbidden-subgraphs-4-walk}. Note, all three graphs are closed $4$-walks in $G$.

\begin{figure}[h]
\centering
\subfigure[$C_4$]{\begin{tikzpicture}[scale=0.9]
\draw (0,0)--(0,2)--(2,2)--(2,0)--(0,0);
 \foreach \i in {0,2} {
 \foreach \j in {2,0} {
    \node[fill=white] at (\i,\j) {}; 
 }
 }
 \node[rectangle,draw=none] at (-0.3,-0.3) {$u_1$};
 \node[rectangle,draw=none] at (2.3,-0.3) {$v_1$};
 \node[rectangle,draw=none] at (2.3,2.3) {$u_2$};
 \node[rectangle,draw=none] at (-0.3,2.3) {$v_2$};
\end{tikzpicture}}
\qquad
\subfigure[$C_3^{\circ}$]{\begin{tikzpicture}[scale=0.9]
\draw (0,0)--(2,0)--(1,1.5) -- (0,0);
\tikzset{every loop/.style={min distance=18mm,in=60,out=120,looseness=30}}
\path (1,1.5) edge  [loop above]  ();
 \foreach \i in {(0,0),(2,0),(1,1.5)} {
    \node[fill=white] at \i {}; 
 }
 \node[rectangle,draw=none] at (-0.3,-0.3) {$u_1$};
 \node[rectangle,draw=none] at (2.3,-0.3) {$v_1$};
 \node[rectangle,draw=none] at (2,1.5) {$u_2=v_2$};
\end{tikzpicture}}
\qquad
\subfigure[$P_2^{\circ,\circ}$]{\label{fig:P2-OO}\begin{tikzpicture}[scale=0.9]
\draw (0,0)--(1,0);
\tikzset{every loop/.style={min distance=18mm,in=30,out=330,looseness=50}}
\path (1,0) edge  [loop right]  ();
\tikzset{every loop/.style={min distance=18mm,in=150,out=210,looseness=50}}
\path (0,0) edge  [loop left]  ();
 \foreach \i in {(0,0),(1,0)} {
    \node[fill=white] at \i {}; 
 }
  \node[rectangle,draw=none] at (0,-0.6) {$u_1=v_1$};
 \node[rectangle,draw=none] at (1.2,0.6) {$u_2=v_2$};
\end{tikzpicture}}
\caption{Forbidden subgraphs of graphs in $\GnoFourCycle$.}\label{fig:forbidden-subgraphs-4-walk}
\end{figure}

Set-join covers of graphs in $\GnoFourCycle$ have special properties listed in the proposition below. 

\begin{proposition}\label{prop:gen4cycle}
 Let $G \in \GnoFourCycle$. Any set-join cover  $\C$ of $G$ has the following properties:
 \begin{enumerate}[$(1)$]
 \item All elements in $\C$  are of the form $\{v\}\vee \mc K$ for some $v \in V(G)$.\label{lem:1}
 \item Let $\mc K \in V(\C)$ with $|\mc K|\geq 2$ and $\{v_1\}\vee \mc K \in \C$. Then:
 \begin{enumerate}[$(a)$]
 \item $\{v_1\}\vee \mc K$ is the only element of  $\C$ with a component $\mc K$. \label{lem:2a}
\item If $\C$ is an optimal set-join cover of $G$ and $\{v_1\}\vee \mc L \in \C$ for $\mc L \not= \mc K$, then $|\mc L|=1$.\label{lem:2b}
 \end{enumerate}
 \item If $\C$ is an optimal set-join cover of $G$, then any two components in $V({\C})$ intersect in at most one vertex of $G$.
 \end{enumerate}
\end{proposition}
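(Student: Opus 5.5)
Each item follows by exhibiting a forbidden configuration of $\GnoFourCycle$ directly from the definition of a set-join. For (2b) we additionally use an exchange argument that contradicts optimality.

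\textbf{Item (1).} Let $\mc K\vee\mc L\in\C$. Since $\C$ is a cover of $G$, every edge $\{i,j\}$ with $i\in\mc K$, $j\in\mc L$ belongs to $E(G)$. If both $|\mc K|\ge 2$ and $|\mc L|\ge 2$, pick $u_1\ne u_2$ in $\mc K$ and $v_1\ne v_2$ in $\mc L$. Overlaps between $\mc K$ and $\mc L$ are allowed, and the definition of $\GnoFourCycle$ only requires $u_1\ne u_2$ and $v_1\ne v_2$. Then $\{u_i,v_j\}\in E(G)$ for all $i,j$, which is a contradiction. Hence one of the two components is a singleton $\{v\}$.

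\textbf{Item (2a).} Let $|\mc K|\ge 2$ and $\{v_1\}\vee\mc K\in\C$. Suppose another element of $\C$ has $\mc K$ as a component. By (1) its other component is a singleton. It cannot be $\mc K\vee\mc K$, since then $\mc K$ would have to be a singleton. So the element is $\{v_2\}\vee\mc K$, and as it is different from $\{v_1\}\vee\mc K$, we have $v_2\ne v_1$. Taking $u_1=v_1$, $u_2=v_2$ and two distinct vertices of $\mc K$ gives the forbidden configuration.

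\textbf{Item (2b).} Let $\C$ be optimal and suppose $\{v_1\}\vee\mc L\in\C$ with $\mc L\ne\mc K$ and $|\mc L|\ge 2$. Replace the two elements $\{v_1\}\vee\mc K$ and $\{v_1\}\vee\mc L$ by the single element $\{v_1\}\vee(\mc K\cup\mc L)$. Its edge set is exactly the union of the two removed edge sets, so the result $\C'$ is still a set-join cover of $G$.
\begin{itemize}
\item By (2a), applied to both $\mc K$ and $\mc L$ (each has size at least $2$), neither component occurs anywhere else in $\C$, so both disappear from $V(\C')$.
\item The component $\{v_1\}$ is still present.
\item At most one new component, $\mc K\cup\mc L$, is added.
\end{itemize}
Hence $|V(\C')|\le|V(\C)|-1$, contradicting optimality.

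\textbf{Item (3).} Let $\mc A\ne\mc B$ be components of an optimal $\C$ with $|\mc A\cap\mc B|\ge 2$. Then $|\mc A|,|\mc B|\ge 2$. By (1) and (2a) they occur in unique elements $\{a\}\vee\mc A$ and $\{b\}\vee\mc B$ of $\C$.
\begin{itemize}
\item If $a\ne b$, take $u_1=a$, $u_2=b$ and $v_1\ne v_2$ in $\mc A\cap\mc B$. All four edges $\{u_i,v_j\}$ lie in $E(G)$, which is forbidden.
\item If $a=b$, then $\{a\}\vee\mc A$ and $\{a\}\vee\mc B$ are two elements at the same vertex with distinct components of size at least $2$. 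This contradicts (2b).
\end{itemize}

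The only delicate points are bookkeeping. In (2b) one must check that the components $\mc K$ and $\mc L$ are genuinely removed, which is exactly what (2a) provides, and that $\{v_1\}$ is not double counted. In (3) one must handle the case $a=b$ separately, which is exactly where (2b) is needed. No deeper obstacle is expected.
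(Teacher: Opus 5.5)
Your proposal is correct and follows the paper's proof essentially step for step. You use the forbidden four-vertex configuration for (1), (2a) and the case $a\ne b$ of (3); the merge of $\{v_1\}\vee\mc K$ and $\{v_1\}\vee\mc L$ into $\{v_1\}\vee(\mc K\cup\mc L)$ for (2b); and an appeal to (2b) when $a=b$ in (3). Your bookkeeping in (2b), which gives $|V(\C')|\le|V(\C)|-1$ rather than the paper's stated equality, is slightly more careful than the paper's, since $\mc K\cup\mc L$ may coincide with $\mc K$ or with another component already in the cover.
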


\begin{proof}
To prove item (1), suppose $\mc K \vee \mc L \in \C$, with $|\mc K| \geq 2$, $|\mc L| \geq 2$. Then there exist $u_1, u_2 \in \mc K$  and $v_1, v_2 \in \mc L$,  $u_1 \ne u_2$ and $v_1 \ne v_2$, so that $\{u_i, v_j\} \in E(G)$ for all $i,j=1,2$, contradicting  the assumption $G\in \GnoFourCycle$.

For item (2) assume that $\{v_1\}\vee \mc K \in \C$ with $u_1, u_2 \in \mc K$,  $u_1 \ne u_2$. If  $\{v_1\} \vee \mc K, \{v_2\}\vee \mc K \in \C$ and $v_1 \ne v_2$, then $\{u_i, v_j\} \in E(G)$ for all $i,j=1,2$. As this contradicts $G\in \GnoFourCycle$, (\ref{lem:2a}) follows. 
If  $\{v_1\}\vee \mc L \in \C$, with $|\mc L|\ge 2$ and ${\mc L}\ne {\mc K}$, then $\{v_1\}\vee \mc L$ is  the only element in the optimal set-join cover $\C$ with a component $\mc L$  by~(\ref{lem:2a}). Hence, $\C' = \C \cup \{\{v_1\}\vee (\mc K \cup \mc L)\} \setminus\{\{v_1\}\vee \mc K, \{v_1\}\vee \mc L\}$ is a set-join cover of $G$ satisfying $|V(\C')| = |V(\C)|-1$, contradicting the optimality assumption for $\C$, and proving~(\ref{lem:2b}).

For item (3) suppose that $u_1, u_2 \in \mc K \cap \mc L$ with $u_1 \ne u_2$ and $\mc K \vee \{v_1\}, \mc L \vee \{v_2\} \in \C$. Then $\{u_i, v_j\} \in E(G)$ for all $i,j=1,2$. If $v_1 = v_2$ this contradicts \eqref{lem:2b}, if $v_1 \ne v_2$ this contradicts $G\in \GnoFourCycle$. 
\end{proof}

\begin{corollary}\label{cor:2c2d}
For $G \in \GnoFourCycle$, let $\C$ be a set-join cover of $G$, $\mc K \in V(\C)$ with $|\mc K|\geq 2$, and $\{v\}\vee \mc K \in \C$. Let $\C_1$ be obtained from $\C$ by removing all elements with a component $\{v\}$, and by removing $v$ from all remaining components. Then $\C_1$ is a set-join cover of $G(v)$ satisfying $|V(\C_1)|\leq |V(\C)|-2$. Furthermore, $\widehat{\C}:=\C_1 \cup \{\{v\}\vee N_G[v]\}$ is a set-join cover of $G$.
\end{corollary}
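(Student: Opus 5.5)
The plan is to check the three assertions directly from the definitions, using Proposition~\ref{prop:gen4cycle}\eqref{lem:2a} only for the counting step. Throughout, for a component $\mc M\in V(\C)$ write $\mc M':=\mc M\setminus\{v\}$. The elements of $\C$ that are kept in $\C_1$ are exactly those with no component equal to $\{v\}$.

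\emph{Step 1: $\C_1$ is well defined.} If $\mc M$ is a component of a kept element, then $\mc M\neq\{v\}$, so $\mc M'\neq\emptyset$. Hence every element $\mc M\vee\mc N$ of $\C$ that is kept yields a genuine set-join $\mc M'\vee\mc N'$ on $V(G(v))$.

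\emph{Step 2: $\C_1$ is a set-join cover of $G(v)$.} Each $\mc M'\vee\mc N'$ has edge set contained in $E(\mc M\vee\mc N)\subseteq E(G)$, and it avoids $v$. So every edge of $\C_1$ is an edge of $G(v)$. Conversely, let $\{a,b\}\in E(G(v))$, so $a,b\neq v$. Some element $\mc M\vee\mc N\in\C$ has $a\in\mc M$ and $b\in\mc N$. This element is not removed: if, say, $\mc M=\{v\}$, then all its edges would contain $v$, which is impossible. Since $a,b\neq v$, we get $a\in\mc M'$ and $b\in\mc N'$, so $\{a,b\}$ is covered by $\C_1$. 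Loops at $a\neq v$ are the case $a=b$ and are handled the same way.

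\emph{Step 3: the bound $|V(\C_1)|\le|V(\C)|-2$.} This is the only step that uses the hypothesis $\mc K\in V(\C)$ with $|\mc K|\ge2$. Let $S\subseteq V(\C)$ be the set of components occurring in kept elements. Then $V(\C_1)=\{\mc M'\colon \mc M\in S\}$, so $|V(\C_1)|\le|S|$. The two components $\{v\}$ and $\mc K$ both lie in $V(\C)$, because $\{v\}\vee\mc K\in\C$, and they are distinct because $|\mc K|\ge2$. Clearly $\{v\}\notin S$. By Proposition~\ref{prop:gen4cycle}\eqref{lem:2a}, $\{v\}\vee\mc K$ is the only element of $\C$ with component $\mc K$, and that element is removed; hence $\mc K\notin S$. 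Therefore $|V(\C_1)|\le |S|\le|V(\C)|-2$. The main point to be careful about is exactly this appeal to \eqref{lem:2a}: without it, $\mc K$ could survive in another element. Distinct components may also collapse after deleting $v$, but that only decreases $|V(\C_1)|$, which is why the argument uses the inequality $|V(\C_1)|\le|S|$ rather than equality.

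\emph{Step 4: $\widehat{\C}$ is a set-join cover of $G$.} Note that $N_G[v]\neq\emptyset$ because $v\in\mc K\vee\{v\}$ is incident to edges. The element $\{v\}\vee N_G[v]$ covers precisely the edges of $G$ incident to $v$, including the loop at $v$ when $v\in N_G[v]$. $\C_1$ covers precisely $E(G(v))$, which consists of the edges of $G$ not incident to $v$. Together they cover $E(G)$ and introduce no non-edges, so $\widehat{\C}$ is a set-join cover of $G$.
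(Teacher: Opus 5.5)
Your proof is correct and follows the same route as the paper: the cover properties are checked directly from the definitions, and the bound $|V(\C_1)|\le |V(\C)|-2$ comes from Proposition~\ref{prop:gen4cycle}\eqref{lem:2a}, which guarantees that both $\{v\}$ and $\mc K$ disappear. Counting through the set $S$ of surviving components is slightly more careful than the paper's one-line phrasing, because it does not need to rule out that some other component collapses onto $\mc K$ once $v$ is deleted.
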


\begin{proof}
  Clearly, $\C_1$ is a set-join cover of $G(v)$. Since $\{v\}\vee \mc K \in \C$ is the unique element of $\C$ with a component $\mc K$ by~Proposition \ref{prop:gen4cycle}, $V(\C_1)$ does not contain $\{v\}$ nor $\mc K$, and the inequality $|V(\C_1)|\leq |V(\C)|-2$ follows. The second claim is straightforward.   
\end{proof}

Consider $G \in \GnoFourCycle$ with a set-join cover $\C$. Let $V_{1}(\C)$ be the set of all vertices $v \in V(G)$ with $\{v\}\in V(\C)$, and let $$N^1_{\C}[x]:=N_G[x]\cap V_{1}(\C),\, N^{\star}_{\C}[x]:=N_G[x]\setminus V_{1}(\C)$$
for all $x\in V(G)$. Let $\C[x]$ be the subset of  $\C$ that contains all the set-joins of $\C$ with at least one component containing $x$. Note that $\C[x]$ covers all the edges that are incident to $x$. We define three different types of vertices $x \in V(G)$ with respect to $\C$: 
\begin{enumerate}
\item[{\bf Type A:}] $|N^{\star}_{\C}[x]| \geq 2$ and $\C[x]=\{\{x\}\vee N^{\star}_{\C}[x]\}\cup \{\{x\}\vee \{v\}\colon v \in N^1_{\C}[x]\}$.  
\item[{\bf Type B:}] $N^{\star}_{\C}[x]=\emptyset$ and $\C[x]=\{\{x\}\vee \{v\}\colon v \in N^1_{\C}[x]\}$.  
\item[{\bf Type C:}] $x \not\in V^1(\C)$ and $\C[x]=\{N^{\star}_{\C}[v]\vee \{v\}\colon v \in N_{G}[x]\}$. 
\end{enumerate}

\begin{definition}
    An optimal set-join cover $\C$ in which every $x \in V(G)$ is of Type A, of Type B or of Type C with respect to $\C$,  
    is called an \emph{\text{$\mathrm{ABC}$} cover}.
\end{definition}

For a general set-join cover $\C$, there can be vertices that are not of any of the three types (A, B or C)  with respect to $\C$. The following proposition shows that any graph in $\GnoFourCycle$ has at least one $\mathrm{ABC}$ cover.

\begin{proposition}\label{prop:Types}
  For every $G \in \GnoFourCycle$ there exists an $\mathrm{ABC}$ cover of $G$.
\end{proposition}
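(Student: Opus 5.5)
We start from an arbitrary optimal set-join cover $\C$ of $G$ and modify it step by step, without increasing its order, until every vertex has Type A, B or C. By Proposition~\ref{prop:gen4cycle}\,(1), every element of $\C$ has the form $\{v\}\vee\mc K$. Among all optimal covers we choose one that is extremal for a suitable potential. A natural choice is to maximize the number of set-joins $\{u\}\vee\{v\}$ with $u,v\in V_1(\C)$, or lexicographically to minimize $\sum_{\mc K\in V(\C),\,|\mc K|\ge 2}|\mc K|$. We then show that a vertex failing all three types allows a modification that keeps optimality but improves the potential, which is a contradiction.

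First we normalize the large components. Let $\mc K\in V(\C)$ with $|\mc K|\ge2$. By Proposition~\ref{prop:gen4cycle}\,(2a) there is a unique $v$ with $\{v\}\vee\mc K\in\C$, and by (2b) every other element containing $v$ is of the form $\{v\}\vee\{w\}$. Every edge $\{v,w\}$ with $w\in V_1(\C)$ can then be covered by $\{v\}\vee\{w\}$ at no cost in components, provided $\{v\}\in V(\C)$. So we remove $V_1(\C)$ from $\mc K$ and add the singleton joins, and we enlarge $\mc K$ to all of $N^\star_\C[v]$ by absorbing any remaining neighbours. The order does not increase, and by optimality it does not decrease either. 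We must handle the case $\{v\}\notin V(\C)$: then $\mc K$ together with the singleton-free structure at $v$ can be reorganized as in Corollary~\ref{cor:2c2d}, which replaces everything at $v$ by $\{v\}\vee N_G[v]$ and gives order at most $|V(\C)|$. After this step, each centre $v$ of a large component satisfies $\{v\}\in V(\C)$ and $\C[v]$ has exactly the form of Type A. If $|N^\star_\C[v]|\le1$ after the normalization, then $v$ becomes Type B.

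Next consider vertices $x\in V_1(\C)$ that are not centres of large components. All elements containing $\{x\}$ are then of the form $\{x\}\vee\mc L$. If some $\mc L$ is large, $x$ is its centre, which was already handled. Otherwise every such $\mc L$ is a singleton $\{v\}$. We must check that $v\in V_1(\C)$, and that $x$ lies in no large component, so that $N^\star_\C[x]=\emptyset$ and $x$ is Type B. If $x$ belongs to a large component $\mc K$ with centre $c$, then the edge $\{x,c\}$ is covered twice, and removing $x$ from $\mc K$ decreases the potential. Finally, a vertex $x\notin V_1(\C)$ is covered only through large components $\mc K\ni x$, each with its centre $v\in N_G[x]$. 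After normalization, $\mc K=N^\star_\C[v]$ for the relevant centres, and every neighbour $v$ of $x$ must be such a centre, since the edge $\{x,v\}$ has to be covered and $x$ is not a singleton. This is exactly Type C.

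The main obstacle is the normalization step: it must not create new large components, and it must not break the type of a vertex that was already fixed. Absorbing neighbours into $\mc K$, or deleting members of $V_1(\C)$ from it, changes $N^\star$ for other vertices. We must also guarantee that no vertex ends up lying in a large component while its own singleton is still in $V(\C)$. We would handle this by choosing the lexicographic potential carefully, first maximizing $|V_1(\C)|$ and then minimizing the total size of large components. We would also use Proposition~\ref{prop:gen4cycle}\,(3), which says two components meet in at most one vertex, so that each local modification affects the other components only in a controlled way and the potential strictly improves.
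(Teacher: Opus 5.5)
Your plan works once you commit to the lexicographic potential from your last paragraph. Its core move is the same as the paper's. For a centre $c$ with $\{c\}\vee\mc K\in\C$ and $|\mc K|\ge 2$, replace this join by $\{c\}\vee N^{\star}_{\C}[c]$ together with the joins $\{c\}\vee\{w\}$ for $w\in N^{1}_{\C}[c]$. Then read off Type A for centres, Type B for the remaining vertices of $V_1(\C)$, and Type C for vertices outside $V_1(\C)$.

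The difference is in the bookkeeping. The paper applies the replacement repeatedly and says that vertices already of Type A stay of Type A or become Type B. That is not literally true when $N^{\star}_{\C}[c]$ is a single vertex $y$, because the replacement then creates a new component $\{y\}$ and changes $N^{\star}$ of other centres. For example, take $P_6$ with vertices $t,z,y,x,s,r$ in path order and the optimal cover $\{x\}\vee\{y,s\}$, $\{z\}\vee\{y,t\}$, $\{s\}\vee\{r\}$. Normalising $x$ leaves $z$ of neither Type A nor Type B. So the paper has to re-apply the replacement and rely on $|V_1|$ only growing for termination. Your extremal choice removes this at once: over optimal covers, maximise $|V_1(\C)|$, then minimise $\sum_{|\mc K|\ge 2}|\mc K|$. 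The first criterion excludes the singleton case, and the two criteria together make all normalisations compatible.

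To turn the sketch into a proof, state the two facts that carry it rather than hedging.

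\emph{First fact.} $N^{\star}_{\C}[c]\subseteq \mc K$ always holds, so nothing needs to be absorbed. Every join has a singleton side, so a neighbour $u\notin V_1(\C)$ of $c$ is never a centre. Hence $\{c,u\}$ is covered by some $\{c\}\vee\mc L$ with $u\in\mc L$ and $|\mc L|\ge 2$, and $\mc L=\mc K$ by Proposition~\ref{prop:gen4cycle}(2b).

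\emph{Second fact.} Replace $\{c\}\vee\mc K$ by $\{c\}\vee(\mc K\setminus V_1(\C))$ and the joins $\{c\}\vee\{w\}$ for $w\in\mc K\cap V_1(\C)$. This does not increase the order, since $\mc K$ occurs only once by (2a), and it does not shrink $V_1(\C)$. Suppose $\mc K\cap V_1(\C)\neq\emptyset$. If $|\mc K\setminus V_1(\C)|=0$, the order drops, contradicting optimality. If it equals $1$, $|V_1(\C)|$ grows, contradicting maximality. If it is at least $2$, the total size drops, contradicting minimality. So in the extremal cover every large component equals $N^{\star}_{\C}[c]$ for its centre, and your type analysis goes through.

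Smaller points:
\begin{itemize}
\item The case $\{v\}\notin V(\C)$ cannot occur, since the centre of a join is a component by definition. So Corollary~\ref{cor:2c2d} is not needed.
\item In your Type B step the edge $\{x,c\}$ need not be covered twice. Adding $\{c\}\vee\{x\}$ uses only existing components, so the move is still free; it is in fact the normalisation of $c$.
\item Your first candidate potential, the number of singleton joins, does not strictly improve under the move, so it would not suffice alone.
\item Proposition~\ref{prop:gen4cycle}(3) plays no role.
\end{itemize}
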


\begin{proof}
 Starting with a set-join  cover $\C_0$ of $G$, we will construct a set-join cover $\C$ of $G$ with the desired property and $|V(\C)|\leq |V(\C_0)|$. 
 
 First consider $x\in V_1(\C_0)$ with $\C_0$ containing a set-join of the form $\mc K \vee \{x\}$ with $|\mc K| \geq 2$. Construct $\C_1$ by replacing all the set-joins in $\C_0[x]$ with $\{\{x\}\vee N^{\star}_{\C_0}[x]\}\cup \{\{x\}\vee \{v\}\colon v \in N^1_{\C_0}[x]\}$. Note that this replacement does not increase the number of components in the cover since $x,v \in V_1(G)$ and if $\{x\}\vee \mc K_j \in \C_0$ for $j\in [t]$, then $\{\mc K_j\}_{j\in [t]}$ are substituted by $N^{\star}_{\C_0}[x]$. If $|N^{\star}_{\C_0}[x]|\geq 2$, then $x$ is now of Type A with respect to $\C_1$, otherwise it is of Type B. Furthermore, all vertices in $G$ that are of Type B with respect to $\C_0$ remain of Type B with respect to $\C_1$, and all vertices in $G$ that are of Type A with respect to $\C_0$ either remain of Type A with respect to $\C_1$ or become Type B with respect to $\C_1$. 

Let $\C$ be the set-join that we obtain after repeatedly performing the above replacement for all $x \in G$ that are joined with a component that is not a singleton. In particular, all $x \in V^1(\C)$  are of Type A or of Type B, and all components of $\C$ that are not a singleton are of the form $N^{\star}_{\C}[v]$ for some $v \in V^1(\C)$. Hence, all $x \in V(G)$ that are not in $V^1(\C)$ are of Type C with respect to $\C$.
\end{proof}

\begin{remark}\label{rem:ABC}
 Below we list a few straightforward observations for an $\mathrm{ABC}$ cover $\C$   of $G \in \GnoFourCycle$.
\begin{itemize}
\item If $x \in V_1(\C)$, then $x$ is not part of any component in $V(\C)$ that contains more than one element. Hence, $V(G)\setminus V_1(\C)$ is equal to the set of all Type C vertices with respect to $\C$. 
\item If $x$ is of Type A, then it has at least two neighbors of Type C. 
\item If $x$ is of Type C, then all its neighbors are of Type A. 
In particular, two vertices of Type C cannot be neighbors. 
\end{itemize}
\end{remark}

\begin{lemma}\label{lem:remove-x1-x2}
   Let $H \in \GnoFourCycle$ and  $\C$ an $\mathrm{ABC}$ cover of $H$. If for $u,v\in V(H)$ and $x_1,x_2\in V_{d=2}(H)\setminus\{u,v\}$ there exists  $u-x_1-x_2-v$ in $H$, then
      \begin{itemize}
       \item $x_1$ is of Type $\mathrm{C}$ and $x_2$ is of Type $\mathrm{A}$ with respect to  $\C$ or vice versa, or
       \item $x_1$ and $x_2$ are both of Type $\mathrm{B}$ in $\C$. 
   \end{itemize}
   In particular, when $u = v$, both $x_1$ and $x_2$ are of Type $\mathrm{B}$ in $\C$.
\end{lemma}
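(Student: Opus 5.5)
The plan is to do a direct case analysis on the type of $x_1$ (and symmetrically $x_2$), using only the definitions of Types A, B, C and the observations collected in Remark~\ref{rem:ABC}. The key structural input is that $x_1, x_2 \in V_{d=2}(H)$ and the walk $u-x_1-x_2-v$ has distinct interior vertices not equal to $u,v$. Hence $N_H[x_1]=\{u,x_2\}$ and $N_H[x_2]=\{x_1,v\}$; neither $x_i$ has a loop, since a loop would already contribute $2$ to the degree.

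First I will record what a Type A vertex of degree two looks like. If $x_1$ is of Type A, then $|N^{\star}_{\C}[x_1]|\geq 2$, so both neighbours $u$ and $x_2$ lie outside $V_1(\C)$. By the first item of Remark~\ref{rem:ABC}, the vertices outside $V_1(\C)$ are exactly the Type C vertices, so $u$ and $x_2$ are both of Type C. Similarly, if $x_2$ is of Type A, then $x_1$ and $v$ are of Type C. Next, if $x_1$ is of Type C, then all its neighbours are of Type A by Remark~\ref{rem:ABC}; in particular $x_2$ is of Type A. By symmetry, if $x_2$ is of Type C then $x_1$ is of Type A.

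Combining these observations gives the main claim. If either of $x_1, x_2$ is of Type A or of Type C, then one of them is of Type C and the other of Type A. Otherwise neither is of Type A or C, and since $\C$ is an $\mathrm{ABC}$ cover, both are of Type B.

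For the case $u=v$, I will rule out the A/C alternative. Suppose, say, $x_1$ is of Type C and $x_2$ is of Type A. Since $x_2$ is of Type A with degree two, both of its neighbours $x_1$ and $v=u$ are of Type C. But $u$ is a neighbour of the Type C vertex $x_1$, so $u$ must be of Type A. This is a contradiction, because $u$ would be both Type C (outside $V_1(\C)$) and Type A (in $V_1(\C)$). The symmetric case, with $x_2$ of Type C and $x_1$ of Type A, is identical. Hence both $x_1$ and $x_2$ are of Type B.

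I expect the only delicate step to be justifying that a degree-two Type A vertex forces both neighbours to be of Type C. This rests on identifying $V(H)\setminus V_1(\C)$ with the set of Type C vertices, which I will take from Remark~\ref{rem:ABC}, together with the fact that $x_i$ has no loop.
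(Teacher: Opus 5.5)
Your proof is correct and follows essentially the same route as the paper: a case analysis on the types of $x_1$ and $x_2$ using the facts in Remark~\ref{rem:ABC}, with the $u=v$ case excluded because the A/C configuration forces one of $u,v$ to be of Type~A and the other of Type~C. You are somewhat more explicit than the paper's terse argument, for instance in checking that $x_1,x_2$ carry no loops so that $N_H[x_1]=\{u,x_2\}$ and $N_H[x_2]=\{x_1,v\}$.
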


\begin{proof}
 We consider the Type of $x_1$ with respect to $\C$. If $x_1$ is of Type A, then $x_2\not\in V_1(\C)$, hence it is of Type C. Note that in this case $u$ must be of Type C and $v$ of Type A, and so $u\ne v$. Similarly, if $x_1$ is of Type C, then $x_2\in V_1(\C_0)$ and it must be of Type A. In this case $u$ must be of Type A and $v$ of Type C, so again $u\ne v$. Finally, if $x_1$ is of Type B, then $x_2$ must also be of Type B. It follows that in the case $u=v$ the only possible option is that $x_1$ and $x_2$ are both of Type B.
\end{proof}

The following lemma enables us to establish families of graphs that have the same gap, as detailed in Section \ref{sec:definition-kappa}.

\begin{lemma} \label{lem:P4toP2} Let $G \in \GnoFourCycle$ be a graph with a walk  $u- x_1-x_2-v$ for $u,v\in V(H)$ and $x_1,x_2\in V_{d=2}(H)\setminus\{u,v\}$.
Let $G'$ be a graph obtained from $G$ by removing vertices $x_1$ and $x_2$ and adding an edge between $u$ and $v$. Then $\gap(G) \le \gap(G')$. 
Furthermore, if $G' \in \GnoFourCycle$, then $\gap(G) = \gap(G')$.
\end{lemma}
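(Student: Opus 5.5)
Since $|V(G)|=|V(G')|+2$, the inequality $\gap(G)\le\gap(G')$ is equivalent to $\stp(G)\ge\stp(G')+2$. The equality under $G'\in\GnoFourCycle$ then amounts to the reverse bound $\stp(G)\le\stp(G')+2$. I will prove the two directions separately, working with set-join covers and the characterization $\stp=\min|V(\C)|$.

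\emph{Direction $\stp(G')\le \stp(G)-2$.} By Proposition~\ref{prop:Types}, take an $\mathrm{ABC}$ cover $\C$ of $G$; it is optimal. By Lemma~\ref{lem:remove-x1-x2} there are two cases.

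In the first case, up to symmetry, $x_1$ is of Type C and $x_2$ is of Type A. Then $u$ is of Type A, $v$ is of Type C, and $u\ne v$. Since $x_2$ has degree two, its Type A set-join is $\{x_2\}\vee\{x_1,v\}$. The vertex $u$ has the set-join $\{u\}\vee N^\star_\C[u]$ with $x_1\in N^\star_\C[u]$. I delete the components $\{x_2\}$ and $\{x_1,v\}$ and remove $x_1$ from the component $N^\star_\C[u]$. I then replace $\{u\}\vee N^\star_\C[u]$ by $\{u\}\vee\big((N^\star_\C[u]\setminus\{x_1\})\cup\{v\}\big)$, which covers the new edge $uv$. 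This loses the two components $\{x_2\}$ and $\{x_1,v\}$ and gains none. A new component could coincide with an old one, but that only decreases the count, which is harmless. By Proposition~\ref{prop:gen4cycle}(\ref{lem:2a}), $\{x_1,v\}$ occurs only in the set-join with $x_2$, so nothing else is lost. One point needs checking: if $N^\star_\C[u]=\{x_1\}$, the modified set-join is simply $\{u\}\vee\{v\}$, which is still fine.

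In the second case, $x_1$ and $x_2$ are of Type B. Then $\{x_1\},\{x_2\}\in V(\C)$, and the set-joins involving them are $\{u\}\vee\{x_1\}$, $\{x_1\}\vee\{x_2\}$ and $\{x_2\}\vee\{v\}$, or the corresponding Type A joins at $u$ and $v$ containing $x_1$ and $x_2$. I delete the components $\{x_1\}$ and $\{x_2\}$ together with all their occurrences inside larger components. The edge $uv$ is then covered by adding $v$ to whichever component at $u$ contained $x_1$, or by adding the set-join $\{u\}\vee\{v\}$ if both $\{u\}$ and $\{v\}$ are components. This subcase is the delicate one. If $u$ is Type C, it lies in a component $N^\star_\C[w]$, and I must instead use the component containing $x_1$; that component is $\{u\}$ or a set at some Type A vertex. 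Since $x_1$ itself is Type B, its partner $u$ must lie in $V_1(\C)$, so $\{u\}$ is a component and I can add $\{u\}\vee\{v\}$ when $v\in V_1(\C)$ as well. If $v$ is Type C, then $x_2$ appears in the set-join $\{x_2\}\vee\{v\}$ only if $\{v\}$ is a component, which contradicts Type C. So in the Type B case both $u$ and $v$ are in $V_1(\C)$, and adding $\{u\}\vee\{v\}$ introduces no new component. In every case I obtain a cover of $G'$ with at most $|V(\C)|-2$ components.

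\emph{Direction $\stp(G)\le\stp(G')+2$ when $G'\in\GnoFourCycle$.} Take an $\mathrm{ABC}$ cover $\C'$ of $G'$. The edge $uv$ is covered by a set-join in which one endpoint, say $u$, is a singleton component $\{u\}$ and $v\in\mc K$. If $\mc K=\{v\}$, I replace $\{u\}\vee\{v\}$ by $\{u\}\vee\{x_1\}$, $\{x_1\}\vee\{x_2\}$ and $\{x_2\}\vee\{v\}$, adding the two components $\{x_1\},\{x_2\}$. Otherwise $|\mc K|\ge 2$, and I replace $v$ by $x_1$ in $\mc K$ and add $\{x_2\}\vee\{x_1,v\}$, adding the components $\{x_2\}$ and $\{x_1,v\}$. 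In both cases the result covers exactly the edges of $G$, with two extra components. The main obstacle is the careful type bookkeeping in the first direction, namely ensuring that the rerouted edge $uv$ never forces a new component; the hypothesis $G\in\GnoFourCycle$ enters through Lemma~\ref{lem:remove-x1-x2}. For the second direction, the $\mathrm{ABC}$ structure of $\C'$, which requires $G'\in\GnoFourCycle$, is what gives a set-join with a singleton side for $uv$.
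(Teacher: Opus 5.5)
Your proposal is correct and is essentially the paper's proof: an $\mathrm{ABC}$ cover of $G$ split via Lemma~\ref{lem:remove-x1-x2} into the Type A/C and Type B cases, and an $\mathrm{ABC}$ cover of $G'$ split by whether $uv$ is covered by $\{u\}\vee\{v\}$ or by a join with a non-singleton side. The constructions are the same up to the symmetry $u\leftrightarrow v$, $x_1\leftrightarrow x_2$. Two points could be tightened. In the first direction, Type B of $x_1,x_2$ immediately forces $u,v\in V_1(\C)$ and rules out the alternatives you entertain, so that discussion can be shortened. In the converse direction, you should say explicitly, as the paper does tacitly, that the $\mathrm{ABC}$ structure makes the set-join covering $uv$ unique and that $\mc K$ is used nowhere else by Proposition~\ref{prop:gen4cycle}~\eqref{lem:2a}.
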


\begin{proof}
Let  $\C$ be an $\mathrm{ABC}$ cover of $G$. Using Lemma \ref{lem:remove-x1-x2} and symmetry, we need to consider two cases.

\noindent{\bf Case 1:} $x_1$ is of Type A and $x_2$ is of Type C. In this case we have $\{\{x_1\}, \{u,x_2\}, \{v\}, N^{\star}_{\C}[v]\} \subseteq V(\C)$ and $x_2 \in N^{\star}_{\C}[v]$. To construct a set-join cover $\C'$ of $G'$ from $\C$ we remove the set-join $\{x_1\} \vee \{u,x_2\}$ from $\C$ and replace  $\{v\} \vee N^{\star}_{\C}[v]$ by $\{v\} \vee (N^{\star}_{\C}[v]\setminus\{x_2\}\cup\{u\})$. Since $|V(\C')|=|V(\C)|-2$ we have $\stp(G')\le \stp(G)-2$  and the conclusion follows.

\noindent{\bf Case 2:} Both $x_1$ and $x_2$ are of Type B. In this case $u,v,x_1,x_2 \in V_1(\C)$. A set-join cover $\C'$ of $G'$ is obtained by removing set-joins $\{u\}\vee \{x_1\}$, $\{x_1\}\vee \{x_2\}$, $\{x_2\}\vee \{v\}$ from $\C$ and adding a set-join $\{u\}\vee \{v\}$. Again we have $|V(\C')|=|V(\C)|-2$ and the conclusion follows.

Suppose now that $G' \in \GnoFourCycle$ and  $\C'$ is an $\mathrm{ABC}$ cover of $G'$. If $u$ and $v$ are both in $V_1(\C')$, then $\{u\} \vee \{v\} \in \C'$ and $$\C = \C' \setminus \{\{u\} \vee \{v\}\} \cup \{\{u\} \vee \{x_1\}, \{x_1\} \vee \{x_2\}, \{x_2\} \vee \{v\}\}$$
is a set-join cover of $G$ of order $|V(\C')|+2$. Otherwise, either $u$ or $v$ is of Type C. Without loss of generality we suppose that $u$ is of Type C and then $v$ is of Type A. It follows that $u \in N^{\star}_{\C'}[v]$ and 
$$\C = \C' \setminus \{\{v\} \vee N^{\star}_{\C'}[v]\} \cup \{\{v\} \vee (N^{\star}_{\C'}[v]\setminus\{u\}\cup\{x_2\}),  \{x_1\} \vee \{u, x_2\}\}$$
is a set-join cover of $G$ of order $|V(\C')|+2$.
This proves $\stp(G) \leq \stp(G') +2$, or equivalently, $\gap(G') \leq \gap(G)$.
\end{proof}

Note, in Lemma \ref{lem:P4toP2} the assumption $G \in \GnoFourCycle$ excludes the possibility $\{u,v\} \in E(G)$. Furthermore, in the case $u=v$, $G'$ contains a loop at $u$.

\section{A weighted multigraph corresponding to a simple graph}
\label{sec:definition-kappa}

Lemma~\ref{lem:P4toP2} enables the substitution of an induced path in a graph $G \in \GnoFourCycle$ with another path of the same length-parity, while preserving the gap. For example, the graphs in Figures~\ref{fig:example-Smmax-graph} and~\ref{fig:example-Smmax-graph1} have the same $\gap$. This motivates our next definition, where we introduce a multigraph $\kappa(G)$ with binomial weights (see Figure~\ref{fig:chdv-example-Smmax3}), in which edges of weight $0$ represent even paths in $G$ and edges of weight $1$ in $\kappa(G)$ represent odd paths in $G$. 

\begin{definition}
For a graph $G=(V(G),E(G)) \in \GnoFourCycle$, we define $\chdv(G)$ to be a weighted multigraph $\chdv(G)=\Gamma=(V(\Gamma),E(\Gamma),w_\Gamma)$ with binomial weights $w_{\Gamma}\colon E(\Gamma) \to \{0,1\}$, defined by:
\begin{itemize}
\item $V(\Gamma)=\degnottwo(G)$,
\item  For any $u,v\in V(\Gamma)$, the set of $\pathk{u}{v}{k}$ in $G$, $k\in \bN_0$, is in bijective correspondence with edges  $e\in E(\Gamma)$ with endpoints $u$ and $v$ in $\Gamma$ and 
$w_{\Gamma}(e)=k \!\! \mod 2$.
\end{itemize}
In a weighted multigraph $\Gamma$ an edge of weight 1 is called \emph{$1$-edge} and an edge of weight 0 is called \emph{$0$-edge}; \emph{$1$-loops} and \emph{$1$-loops} are defined equivalently.
\end{definition}

\begin{figure}[h]
\centering
\subfigure[$G$]{\begin{tikzpicture}[scale=0.6]\label{fig:example-Smmax-graph}
\draw (3,3)--(4,4.5)--(3,6)--(7,6)--(7,3)--(3,3);
\draw (4,1)--(4,2) -- (4,3)--(7,6);
\draw (3,3)--(3,6);
\draw (2,6)--(3,6);
\draw (3,6)--(3.5,6.6)--(3.5,7.4)--(3,8)--(2.5,7.4)--(2.5,6.6)--(3,6);
 \foreach \i in {4,5,6} {
 \foreach \j in {3,6} {
    \node[fill=white] at (\i,\j) {}; 
 }
 }
\node[fill=white] at (7,3) {}; 
\node[fill=white] at (4,2) {}; 
\foreach \i in {(7,6),(3,6),(2,6),(4,1),(3,3),(4,3)} {
    \node[fill=black] at \i {}; 
}
 \foreach \j in {4,5} {
    \node[fill=white] at (7,\j) {}; }
 \foreach \j in {3.75,4.5,5.25} {
    \node[fill=white] at (3,\j) {}; 
 }
\foreach \i in {4,5.5} {
    \node[fill=white] at (\i,4.5) {}; 
 }
 \node[fill=white] at (2.5,7.4) {};  
 \node[fill=white] at (3.5,7.4) {};  
 \node[fill=white] at (2.5,6.6) {};  
 \node[fill=white] at (3.5,6.6) {};  
 \node[fill=white] at (3,8) {};  
\tikzset{every loop/.style={min distance=23mm,in=240,out=300,looseness=50}}
\path (3,3) edge  [loop below] (3,6);
\end{tikzpicture}}
\subfigure[$\chdv(G)$]{\begin{tikzpicture}[scale=0.6]\label{fig:chdv-example-Smmax3}
\tikzset{
every node/.style={inner sep=2pt}
}
\path (3,3) edge node [above]  {$0$} (7,3)  edge [bend left] node [left] {$1$} (3,6) edge [bend right] node [right] {$1$} (3,6);
\path (7,6) edge node [above]  {$1$} (3,6)  edge [bend left] node [right] {$1$} (7,3) edge [bend right] node [left] {$1$} (7,3);
\path (7,3) edge node [right]  {$1$} (7,1.5) ;
\path (3,6) edge node [above]  {$0$} (1.5,6) ;
\tikzset{every loop/.style={min distance=23mm,in=240,out=300,looseness=50}}
\path (3,3) edge  [loop below] node {$0$} ();
\tikzset{every loop/.style={min distance=23mm,in=120,out=60,looseness=50}}
\path (3,6) edge  [loop above] node {$1$} ();
\tikzset{
every node/.style={draw, circle, inner sep=2pt}
}
\foreach \i in {(7,6),(3,3),(3,6),(7,1.5),(1.5,6),(7,3),(3,6)} {
    \node[fill=black] at \i {}; 
}
\phantom{  \node[fill=black] at (0.5,3) {}; 
\node[fill=black] at (8.5,3) {}; }
\end{tikzpicture}}
\subfigure[$\zeta(\kappa(G))$]{\begin{tikzpicture}[scale=0.6]\label{fig:example-Smmax-graph1}
\draw (3,3)--(2.5,2)--(3.5,2)--(3,3)--(4,3.75)--(4,5.25)--(3,6)--(7,6)--(7,3)--(3,3);
\draw (5.4,1)-- (5.4,3)--(7,6);
\draw (3,3)--(3,6);
\draw (0.9,6)--(3,6);
\draw (3,6)--(3.5,6.6)--(3.5,7.4)--(3,8)--(2.5,7.4)--(2.5,6.6)--(3,6);
 \foreach \i in {4,5,6} {
    \node[fill=white] at (\i,6) {}; 
 }
\node[fill=white] at (7,3) {}; 
\node[fill=white] at (5.4,2.5) {}; 
\node[fill=white] at (5.4,2) {}; 
\node[fill=white] at (5.4,1.5) {}; 
\node[fill=white] at (5.8,3.75) {};
\node[fill=white] at (6.6,5.25) {};
 \node[fill=white] at (2.5,7.4) {};  
 \node[fill=white] at (3.5,7.4) {};  
 \node[fill=white] at (2.5,6.6) {};  
 \node[fill=white] at (3.5,6.6) {};  
 \node[fill=white] at (3,8) {};  
 \node[fill=white] at (2.3,6) {};  
 \node[fill=white] at (1.6,6) {};  
 \node[fill=black] at (0.9,6) {};  
\foreach \i in {(7,6),(3,6), (5.4,1),(3,3),(5.4,3)} {
    \node[fill=black] at \i {}; 
}
\foreach \i in {(3.8,3),(4.6,3),(6.2,3)} {
    \node[fill=white] at \i {}; 
}
 \foreach \j in {3.75,4.5,5.25} {
    \node[fill=white] at (3,\j) {}; 
     \node[fill=white] at (4,\j) {}; 
 }
\foreach \i in {6.2,7} {
    \node[fill=white] at (\i,4.5) {}; 
 }
\foreach \i in {2.5,3.5} {
    \node[fill=white] at (\i,2) {}; 
 }
\end{tikzpicture}}
\caption{An example of graph $G$, its weighted multigraph $\chdv(G)$ and the graph $\zeta(\chdv(G))$.
 All high degree vertices and leaves of simple graphs are  colored black.}\label{fig:compressed-hdv-graph} 
\end{figure}

Note that $\kappa(P_n)$ is $P_2$ with the only edge of weight $1$ if $n$ is odd and of weight $0$ if $n$ is even.
Moreover, a cycle $C_n$ has $V(C_n)=V_{d=2}(C_n)$ and so $\kappa(C_n)$ is an empty graph, $V(\kappa(C_n))=\emptyset$.

\begin{definition}
Let $\Gamma$ be a weighted multigraph with binomial weights. Let $\zeta(\Gamma)$ be a simple graph (without loops) obtained from $\Gamma$ in the following way:
\begin{itemize} 
\item $V(\Gamma)\subseteq V(\zeta(\Gamma))$. The set $V(\zeta(\Gamma)) \setminus V(\Gamma)$ contains precisely the vertices, defined in the following items.  
\item Substitute each edge $e=\{u,v\} \in E(\Gamma)$ of weight $w_{\Gamma}(e)=0$ by $\pathk{u}{v}{2}$. This operation adds two new vertices $x_1$ and  $x_2$ to $V(\zeta(G))$, where $\deg_{\zeta(\Gamma)}(x_j)=2$ for $j\in[2]$.
\item Substitute each edge $e=\{u,v\} \in E(\Gamma)$, where $u\ne v$ and $w_{\Gamma}(e)=1$, by $\pathk{u}{v}{3}$. This operation adds three new vertices $x_1,x_2,x_3$ to $V(\zeta(\Gamma))$, where $\deg_{\zeta(\Gamma)}(x_j)=2$ for $j\in[3]$.
\item Substitute each loop $e=\{u,u\}\in  E(\Gamma)$ of weight $w_{\Gamma}(e)=1$ by $\pathk{u}{u}{5}$. This operation adds five new vertices $x_1,\ldots,x_5$ to $V(\zeta(\Gamma))$ and, where $\deg_{\zeta(\Gamma)}(x_j)=2$ for $j\in[5]$.
\end{itemize} 
\end{definition}

An illustration of mappings $\kappa$  and $\zeta$ is given in Figure~\ref{fig:compressed-hdv-graph}.

Observe that $\zeta(\kappa(G))$ may contain more or less vertices than $G$. Furthermore, $\zeta(\kappa(G))\in \GnoFourCycle$ and has no loops. Therefore, by Lemma~\ref{lem:P4toP2} we have
\begin{equation}\label{eq:gap-zeta}\gap(G) = \gap(\zeta(\chdv(G))).
\end{equation}
Note that the mapping $\kappa$ is not injective, but $\zeta$ is. Hence, we define the $\gap$ of a multigraph $ \Gamma$ as the $\gap$ of its image via $\zeta$ mapping.

\begin{definition}
 For every multigraph $\Gamma$ with binomial weights we define $$\gapzvezdica(\Gamma):=\gap(\zeta(\Gamma)).$$
\end{definition}

Note that the definition of $\gapzvezdica$ and~\eqref{eq:gap-zeta} imply 
\begin{equation}\label{eq:gap-gapzvezdica}
    \gap(G) = \gapzvezdica(\kappa(G))
\end{equation}
for $G\in \GnoFourCycle$. We use this equality to compute the gap of a graph $G\in \GnoFourCycle$, and in order to do this, we present operations on $\Gamma$ that preserve $\gapzvezdica(\Gamma)$ in the following section.

\section{Operations on $\Gamma$ that preserve $\gapzvezdica$}\label{sec:operations-preserve-gap*}

This section examines how the parameter $\gapzvezdica$ behaves under several fundamental operations on weighted multigraphs. In each case, we show that the $\gapzvezdica$ is preserved, and these operations
will later enable us to design an algorithm for computing $\gapzvezdica$.

\subsection{Removing a vertex from $\Gamma$} To remove a vertex $v\in V(\Gamma)$ from the multigraph $\Gamma$ together with all edges incident to it, we follow the notation used for simple graphs and denote the resulting graph by $\Gamma(v)$. Note that $\gap(\zeta(\Gamma(v)))$ is not necessarily equal to $\gap(\zeta(\Gamma)(v))$, so we define a different way of vertex removal from a multigraph $\Gamma$ with binomial weights, which we denote by $\Gamma_{-v}$.

\begin{definition}
 Let $\Gamma=(V(\Gamma),E(\Gamma),w_\Gamma)$ be  a weighted multigraph  with binomial weights and let $v\in V(\Gamma)$. 
 Let the edges in $E(\Gamma)$ that are incident to $v$ be:
\begin{itemize}
\item $\ell_j$ loops with weight  $j\in\{0,1\}$,
\item $f_j$ edges incident to leaves with weight  $j\in\{0,1\}$. We denote the leaves incident to edges of weight $1$ by $v_i$, $i\in [f_1]$.  
\item $k_j$ edges  incident to non-leaf vertices with weight $j\in\{0,1\}$. The edges of weight $1$ we denote by $e_{i}=\{u_i,v\}$, $j\in [k_1]$. 
\end{itemize}
We define multigraph $\Gamma_{-v}$ as
\begin{align*} 
V(\Gamma_{-v})&=(V(\Gamma(v))\setminus\{v_i\colon i \in [f_1]\}) \cup \{v'_i\colon i\in [\ell_1]\} \cup \{u'_i\colon i \in [k_1]\},\\
E(\Gamma_{-v})&=E(\Gamma(v))\cup \{e'_i=\{u_i,u'_i\}  \colon i \in [k_1] \} \text { and}\\
 w_{\Gamma_{-v}}(e)&=\begin{cases}
 0,& \text{if }e\in \{\{u_i,u'_i\} \colon i \in [k_1] \},\\
 w_{\Gamma}(e), & \text{otherwise.}
 \end{cases}
\end{align*}
\end{definition}

\begin{remark}
To construct the multigraph $\Gamma_{-v}$ from $\Gamma$, we first remove the vertex $v$  with all the incident edges, which results in the graph $\Gamma(v)$.  Note that $V(\Gamma(v))$ has $f_0+f_1$ new vertices of degree $0$. To obtain $\Gamma_{-v}$:
\begin{itemize}
\item Remove $f_ 1$ of vertices of degree $0$. 
\item Add vertices $v_i'$, $i \in [\ell_1]$, to  $V(\Gamma(v))$. These are kept as vertices of degree $0$.
\item  Add vertices $u'_i$, $i\in[k_1]$, to $V(\Gamma(v))$. Note that a vertex $u\in V(\Gamma)$ may appear several times in the multiset $\{u_i\colon i\in [k_1]\}$, but $u'_i$, $i\in [k_1]$, are all distinct in $V(\Gamma_{-v})$.
\item $E(\Gamma_{-v})$ consists of all the edges in $E(\Gamma(v))$ with the same weights as in $\Gamma$, together with new edges $\{e'_i=\{u_i,u'_i\}  \colon i \in [k_1] \} $. 
\end{itemize}
\end{remark}

\begin{example} Consider the weighted multigraph $\Gamma=\chdv(G)$ for the graph $G$ from Figure~\ref{fig:compressed-hdv-graph} and  vertices $v,w\in V(\Gamma)$ as shown in Figure~\ref{fig:chdv-example-Smmax4}. Multigraphs $\Gamma_{-v}$ and $\Gamma_{-w}$ are shown in Figures~\ref{fig:example-remove-v} and~\ref{fig:example-remove-w}.

\begin{remark}\label{rem:gamma-v}
 We acknowledge that the operation $\Gamma_{-v}$ is technical, and we justify its inclusion by Lemma~\ref{lem:zetaGamma-v}.  
However, the operation simplifies if $v$ is incident only to edges of weight $0$. In this case,  $\Gamma_{-v} = \Gamma(v)$. 
\end{remark}

 \begin{figure}[htb]
\centering
\subfigure[$\Gamma$]{
\begin{tikzpicture}[scale=0.6]\label{fig:chdv-example-Smmax4}
\tikzset{
every node/.style={inner sep=2pt}
}
\path (3,3) edge node [above]  {$0$} (7,3)  edge [bend left] node [left] {$1$} (3,6) edge [bend right] node [right] {$1$} (3,6);
\path (7,6) edge node [above]  {$1$} (3,6)  edge [bend left] node [right] {$1$} (7,3) edge [bend right] node [left] {$1$} (7,3);
\path (7,3) edge node [right]  {$1$} (7,1.5) ;
\path (3,6) edge node [above]  {$0$} (1.5,6) ;
\tikzset{every loop/.style={min distance=23mm,in=240,out=300,looseness=50}}
\path (3,3) edge  [loop below] node {$0$} ();
\tikzset{every loop/.style={min distance=23mm,in=120,out=60,looseness=50}}
\path (3,6) edge  [loop above] node {$1$} ();
\tikzset{
every node/.style={draw, circle, inner sep=2pt}
}
\foreach \i in {(7,6),(3,3),(3,6),(7,1.5),(1.5,6)} {
    \node[fill=black] at \i {}; 
}
\node[rectangle,draw=none] at (3.5,6.3) {\small $w$};
\node[rectangle,draw=none] at (7.5,3) {\small $v$};
 \node[fill=cyan] at  (7,3) {}; 
  \node[fill=purple] at  (3,6) {}; 
\end{tikzpicture}}
\subfigure[$\Gamma_{-v}$]{\begin{tikzpicture}[scale=0.6]\label{fig:example-remove-v}
\tikzset{
every node/.style={inner sep=2pt}
}
\path (3,3)  edge [bend left] node [left] {$1$} (3,6) edge [bend right] node [right] {$1$} (3,6);
\path (7,6) edge node [above]  {$1$} (3,6)  edge [bend left] node [right] {$0$} (7.5,3.5) edge [bend right] node [left] {$0$} (6.5,3.5);
\path (3,6) edge node [above]  {$0$} (1.5,6) ;
\tikzset{every loop/.style={min distance=23mm,in=240,out=300,looseness=50}}
\path (3,3) edge  [loop below] node {$0$} ();
\tikzset{every loop/.style={min distance=23mm,in=120,out=60,looseness=50}}
\path (3,6) edge  [loop above] node {$1$} ();
\tikzset{
every node/.style={draw, circle, inner sep=2pt}
}
\foreach \i in {(7,6),(3,6),(3,3),(6.5,3.5),(7.5,3.5),(1.5,6)} {
    \node[fill=black] at \i {}; 
}
\end{tikzpicture}}
\subfigure[$\Gamma_{-w}$]{\begin{tikzpicture}[scale=0.6]\label{fig:example-remove-w}
\tikzset{
every node/.style={inner sep=2pt}
}
\path (3,3)  edge [bend left] node [left] {$0$} (2.5,5.5) edge [bend right] node [right] {$0$} (3.5,5.5);
\path (7,6) edge node [above]  {$0$} (3.5,6)  edge [bend left] node [right] {$1$} (7,3) edge [bend right] node [left] {$1$} (7,3);
\path (7,3) edge node [right]  {$1$} (7,1.5);
\path (3,3) edge node [above]  {$0$} (7,3);
\tikzset{every loop/.style={min distance=23mm,in=240,out=300,looseness=50}}
\path (3,3) edge  [loop below] node {$0$} ();
\tikzset{
every node/.style={draw, circle, inner sep=2pt}
}
\foreach \i in {(7,6),(3.5,5.5),(3.5,6),(2.5,5.5),(3,3),(1.5,6),(3,7),(7,1.5),(7,3)} {
    \node[fill=black] at \i {}; 
}
\end{tikzpicture}}
\caption{Two examples of removal of vertices.}\label{fig:removing-vtx}
\end{figure}
\end{example}

\begin{lemma}\label{lem:zetaGamma-v}
Let $\Gamma=(V(\Gamma),E(\Gamma),w_{\Gamma})$ be  a weighted multigraph  with binomial weights and $v\in V(\Gamma)$. Then
\begin{equation*}
\gapzvezdica(\Gamma_{-v})=\gap(\zeta(\Gamma_{-v}))=\gap(\zeta(\Gamma)(v)).
\end{equation*}
\end{lemma}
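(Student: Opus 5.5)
The first equality is the definition of $\gapzvezdica$, so the task is to show $\gap(\zeta(\Gamma_{-v}))=\gap(\zeta(\Gamma)(v))$. The plan is to describe $\zeta(\Gamma)(v)$ explicitly and transform it into $\zeta(\Gamma_{-v})$ using only gap-preserving moves: Proposition~\ref{prop:from KBS24}~\eqref{prop:2.1(1)} (deleting a leaf together with its neighbour), Lemma~\ref{lem:P4toP2} (shortening a path of degree-two vertices by two), and the fact that the gap is additive over disjoint unions.

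Deleting $v$ from $\zeta(\Gamma)$ affects each edge incident to $v$ separately, since the subdivision paths are internally disjoint. We go through them by type.
\begin{itemize}
\item A $0$-loop at $v$ was $\pathk{v}{v}{2}$. It becomes an isolated $P_2$, which has gap $0$, so we drop it.
\item A $1$-loop was $\pathk{v}{v}{5}$. It becomes an isolated $P_5$. Since $\gap(P_5)=1=\gap(P_1)$, we replace it by the isolated vertex $v_i'$.
\item A $0$-edge to a leaf $x$ leaves a $P_3$ ending at $x$, whose gap is $1$. This matches the degree-$0$ vertex $x$ kept in $\Gamma_{-v}$.
\item A $1$-edge to a leaf $v_i$ leaves a $P_4$, whose gap is $0$. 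This matches the removal of $v_i$ in $\Gamma_{-v}$.
\item A $0$-edge to a non-leaf $u$ leaves $u-x_2-x_1$, a pendent path $P_2$ at $u$. A $0$-edge in $\Gamma(v)$ would be $\pathk{u}{\cdot}{2}$, so instead we apply Proposition~\ref{prop:from KBS24}~\eqref{prop:2.1(1)} to the leaf $x_1$. This deletes $x_1,x_2$, exactly as when the edge is simply removed in $\Gamma(v)$.
\item A $1$-edge $e_i=\{u_i,v\}$ to a non-leaf leaves the pendent path $u_i-x_3-x_2-x_1$. In $\zeta(\Gamma_{-v})$ the $0$-edge $\{u_i,u_i'\}$ becomes $u_i-y_1-y_2-u_i'$ with $u_i'$ a leaf, which is isomorphic to it. So nothing needs to be done here.
\end{itemize}
When $u_i$ coincides with another neighbour, or appears several times, the pendent paths still hang at distinct vertices, so these moves apply one at a time.

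The point needing care is that removing the leaf pairs one after another is legitimate. Each application of Proposition~\ref{prop:from KBS24}~\eqref{prop:2.1(1)} is to a genuine leaf of the current graph whose neighbour is a degree-two vertex of a pendent path. The isolated components are handled by additivity of $\stp$ over disjoint unions: an optimal set-join cover splits along components, so $\gap$ of a disjoint union is the sum of the gaps. Their gaps come from Proposition~\ref{prop:from KBS24}~\eqref{prop:2.1(3)}.

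The main obstacle is the bookkeeping in the definition of $\zeta$. One must check that the surviving multigraph part reproduces $\zeta(\Gamma(v))$ exactly. One must also check that vertices which become leaves or isolated in $\Gamma(v)$ are represented correctly: $\zeta$ keeps a vertex of $\Gamma$ even when it has degree $0$ or $1$, so the vertex counts match. No application of Lemma~\ref{lem:P4toP2} should be needed, since every piece matches after only leaf deletions and comparisons with isolated paths. Having handled each incident edge, one multiplies out the matched contributions and concludes the equality.
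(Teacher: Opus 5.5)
Your proposal is correct and follows essentially the same case analysis as the paper's proof. Each type of edge at $v$ ($0$/$1$-loops, $0$/$1$-edges to leaves, $0$/$1$-edges to non-leaves) is matched using $\gap(P_2)=\gap(P_4)=0$, $\gap(P_3)=\gap(P_5)=\gap(K_1)=1$, deletion of pendent $P_2$'s by Proposition~\ref{prop:from KBS24}~\eqref{prop:2.1(1)}, and the isomorphism of the pendent $P_3$'s. Your remarks on additivity over components and on the legitimacy of sequential leaf deletions only make explicit what the paper leaves implicit. One small correction: parallel pendent paths may hang at the same vertex $u$, and what actually matters is that they are internally disjoint.
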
 
\begin{proof}
We consider different cases, depending on the edges that are incident to $v$ in $\Gamma$.

If $v$ has a loop of weight $0$,  it is removed in  $\Gamma_{-v}$, and  $\zeta(\Gamma)(v)$ contains a component isomorphic to the path $P_2$ with $\gap(P_2) = 0$. 
If $v\in V(\Gamma)$ has a loop of weight $1$, $\zeta(\Gamma)(v)$ contains a component isomorphic to the path $P_5$. Graphs  $\Gamma_{-v}$ and $\zeta(\Gamma_{-v})$ both contain a component isomorphic to $K_1$, and $\gap(P_5) = \gap(K_1) = 1$. 

If $v$ is adjacent to a leaf with the edge of weight $0$, $\zeta(\Gamma)(v)$ contains a component isomorphic to the path $P_3$, and  $\Gamma_{-v}$ contains a component isomorphic to $K_1$, and $\gap(P_3) = \gap(K_1) = 1$. 
If $v$ is adjacent to a leaf with the edge of weight 1, $\zeta(\Gamma)(v)$ contains a component isomorphic to the path $P_4$ with $\gap(P_4) = 0$, and this edge together with the leaf is removed in $\Gamma_{-v}$. 

If there is an edge $e$ of weight 0 incident to $v$ and $u$ with $u\notin \leaves(\Gamma)$, $u\ne v$, then $\zeta(\Gamma)(v)$ contains a pendent  $P_2$ at $u$ which can be removed by Proposition~\ref{prop:from KBS24}~\eqref{prop:2.1(1)}. Also, this edge is removed from  $\zeta(\Gamma_{-v})$. Finally, if there is an edge $e$ of weight 1 incident to $v$ and $u$ with $u\notin \leaves(\Gamma)$, $u\ne v$, then $\zeta(\Gamma)(v)$ contains a pendent  $P_3$  at $u$. In  $\Gamma_{-v}$, this edge is substituted by an edge to a new leaf of weight $0$, thus  $\zeta(\Gamma_{-v})$ contains a pendent  $P_3$  on vertex $u$ as well. 

Hence,  $\gap(\zeta(\Gamma_{-v}))=\gap(\zeta(\Gamma)(v))$, which completes the proof.
\end{proof}

\subsection{Elimination of low degree vertices}

\begin{lemma}[Elimination of leaves]\label{lem:removing-leaves-on-chdv}
Suppose $\Gamma$ is a multigraph with binomial weights. Let $v$ be a leaf in $\Gamma$, $u$ its unique neighbour and $e=\{u,v\}$.
\begin{enumerate}
\item If $w_{\Gamma}(e)=1$, then $\gapzvezdica(\Gamma)=\gapzvezdica(\Gamma(v))$.
\item If $w_{\Gamma}(e)=0$, then $\gapzvezdica(\Gamma)=\gapzvezdica(\Gamma(v)_{-u})$.
 \end{enumerate}
\end{lemma}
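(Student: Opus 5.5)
We work directly in $\zeta(\Gamma)$, where $\gapzvezdica(\Gamma)=\gap(\zeta(\Gamma))$. Since $v$ is a leaf of $\Gamma$, $\deg_\Gamma(v)=1$, so $e$ is not a loop and $u\neq v$. In $\zeta(\Gamma)$ the edge $e$ becomes a pendent path attached at $u$: for $w_\Gamma(e)=1$ it is $u-x_1-x_2-x_3-v$, with $v$ a leaf of $\zeta(\Gamma)$; for $w_\Gamma(e)=0$ it is $u-x_1-x_2-v$. The plan is to strip this path off with Proposition~\ref{prop:from KBS24}~\eqref{prop:2.1(1)}, which removes a leaf together with its unique neighbour and keeps $\gap$ unchanged.

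\emph{Case $w_\Gamma(e)=1$.} First remove the leaf $v$ together with its neighbour $x_3$. In the resulting graph $x_2$ is a leaf whose neighbour is $x_1$, so we remove $x_2$ and $x_1$ as well. Both steps preserve $\gap$ by Proposition~\ref{prop:from KBS24}~\eqref{prop:2.1(1)}. What remains is $\zeta(\Gamma)$ with the whole path $x_1,x_2,x_3,v$ deleted. This is exactly $\zeta(\Gamma(v))$, because $\zeta$ acts edge by edge and $u$ remains a vertex of $\Gamma(v)$. If $u$ becomes isolated, it is a vertex of degree $0$ in both graphs, so the identification still holds. Hence $\gapzvezdica(\Gamma)=\gapzvezdica(\Gamma(v))$.

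\emph{Case $w_\Gamma(e)=0$.} Remove the leaf $v$ and its neighbour $x_2$. Then $x_1$ is a leaf adjacent to $u$, so the next application removes $x_1$ and $u$. The result is $\zeta(\Gamma)(\{v,x_1,x_2,u\})$, which equals $\zeta(\Gamma(v))(u)$ since again $\zeta$ acts edge by edge. Lemma~\ref{lem:zetaGamma-v}, applied to the multigraph $\Gamma(v)$ and its vertex $u$, gives $\gap(\zeta(\Gamma(v))(u))=\gapzvezdica(\Gamma(v)_{-u})$, which is the claim.

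The only point needing care is the bookkeeping identification of $\zeta$ of a subgraph with the corresponding induced subgraph of $\zeta(\Gamma)$. One must check that the degree-$0$ vertices and the leaves created at other vertices are treated consistently. This is exactly what Lemma~\ref{lem:zetaGamma-v} was designed to handle, so I expect no real obstacle beyond this check.
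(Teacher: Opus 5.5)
Your proposal is correct and follows essentially the same route as the paper. The paper likewise removes the pendent $P_4$ (weight $1$) or pendent $P_3$ (weight $0$) at $u$ in $\zeta(\Gamma)$ by applying Proposition~\ref{prop:from KBS24}~\eqref{prop:2.1(1)} twice, identifies the result with $\zeta(\Gamma(v))$ or $\zeta(\Gamma(v))(u)$ respectively, and in the second case concludes with Lemma~\ref{lem:zetaGamma-v} applied to $\Gamma(v)$ and $u$.
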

\begin{proof}
The graph $\zeta(\Gamma)$ contains either pendent  $P_4$  (in the case $w_{\Gamma}(e)=1$) or pendent  $P_3$  (in the case $w_{\Gamma}(e)=0$) at vertex $u$. Using Proposition~\ref{prop:from KBS24}(\ref{prop:2.1(1)}) we remove a leaf twice from $\zeta(\Gamma)$. In the first case we obtain the graph $\zeta(\Gamma(v))$, so (1) follows. In the second case we obtain the graph $\zeta(\Gamma(v))(u)$ and (2) follows by Lemma~\ref{lem:zetaGamma-v}. 
\end{proof}

\begin{example}\label{ex:5.9}
Consider weighted multigraph $\Gamma = \chdv(G)$ from Figure~\ref{fig:compressed-hdv-graph} and let $u_1$ and $u_2$ be the leaves in it (see Figure \ref{fig:chdv-example-Smmax}). By Lemma~\ref{lem:removing-leaves-on-chdv}, we have  $\gapzvezdica(\Gamma)=\gapzvezdica(\Gamma(u_2))$ and so the gap does not change by elimination of the leaf $u_2$.
However, the elimination of leaf $u_1$ in $\Gamma$ results in the removal of a high degree vertex, which will result in a graph $\Gamma_1$ with new leaves  (see Figure~\ref{fig:chdv-example-Smmax1}). So, by consequently eliminating some new leaves in $\Gamma_1$ (denoted by blue in Figure~\ref{fig:chdv-example-Smmax1}), we obtain a graph with fewer vertices, and in some cases this reduces to a graph with a known $\gapzvezdica$ (see Figure~\ref{fig:chdv-example-Smmax2}).  
Thus 
$$\gapzvezdica(\Gamma)=\gapzvezdica(\Gamma_1)=\gapzvezdica(\Gamma_2)=2+\gapzvezdica(P_3) = 2+\gap(P_7) = 3.$$

\begin{figure}[h]
\centering
\subfigure[$\Gamma$]{
\begin{tikzpicture}[scale=0.6]\label{fig:chdv-example-Smmax}
\tikzset{
every node/.style={inner sep=2pt}
}
\path (3,3) edge node [above]  {$0$} (7,3)  edge [bend left] node [left] {$1$} (3,6) edge [bend right] node [right] {$1$} (3,6);
\path (7,6) edge node [above]  {$1$} (3,6)  edge [bend left] node [right] {$1$} (7,3) edge [bend right] node [left] {$1$} (7,3);
\path (7,3) edge node [right]  {$1$} (7,1.5) ;
\path (3,6) edge node [above]  {$0$} (1.5,6) ;
\tikzset{every loop/.style={min distance=23mm,in=240,out=300,looseness=50}}
\path (3,3) edge  [loop below] node {$0$} ();
\tikzset{every loop/.style={min distance=23mm,in=120,out=60,looseness=50}}
\path (3,6) edge  [loop above] node {$1$} ();
\tikzset{
every node/.style={draw, circle, inner sep=2pt}
}
\foreach \i in {(7,6),(3,3),(7,3),(3,6)} {
    \node[fill=black] at \i {}; 
}
\node[rectangle,draw=none] at (1,6) {\small $u_1$};
\node[rectangle,draw=none] at (7.5,1.5) {\small $u_2$};
 \node[fill=cyan] at  (7,1.5) {}; 
  \node[fill=cyan] at  (1.5,6) {}; 
\end{tikzpicture}}
\subfigure[$\Gamma_1$]{\begin{tikzpicture}[scale=0.6]\label{fig:chdv-example-Smmax1}
\tikzset{
every node/.style={inner sep=2pt}
}
\path (3,3)  edge [bend left] node [left] {$0$} (2.5,5.5) edge [bend right] node [right] {$0$} (3.5,5.5);
\path (7,6) edge node [above]  {$0$} (3.5,6)  edge [bend left] node [right] {$1$} (7,3) edge [bend right] node [left] {$1$} (7,3);
\path (3,3) edge node [above]  {$0$} (7,3);
\tikzset{every loop/.style={min distance=23mm,in=240,out=300,looseness=50}}
\path (3,3) edge  [loop below] node {$0$} ();
\tikzset{
every node/.style={draw, circle, inner sep=2pt}
}
\foreach \i in {(7,6),(3,3),(3,7),(3.5,5.5),(7,3)} {
    \node[fill=black] at \i {}; 
}
\foreach \i in {(3.5,6),(2.5,5.5)} {
    \node[fill=cyan] at \i {}; 
}
\phantom{  \node[fill=black] at (1.5,3) {}; 
\node[fill=black] at (8.5,3) {}; }
\end{tikzpicture}}
\subfigure[$\Gamma_2$]{\begin{tikzpicture}[scale=0.6]\label{fig:chdv-example-Smmax2}
\tikzset{
every node/.style={inner sep=2pt}
}
\phantom{ \node[fill=black] at (3,0.8) {};}
\path (7,3)  edge [bend left] node [left] {$0$} (6.5,5.5) edge [bend right] node [right] {$0$} (7.5,5.5);
\tikzset{
every node/.style={draw, circle, inner sep=2pt}
}
\foreach \i in {(4.5,5.5),(4,7),(7,3),(6.5,5.5),(7.5,5.5)} {
    \node[fill=black] at \i {}; 
}
\end{tikzpicture}}
\caption{Elimination of blue leaves in each step from Example~\ref{ex:5.9}.}\label{fig:chdv-removing-leaves}
\end{figure}
\end{example}

For any graph $G\in \GnoFourCycle$ the multigraph $\Gamma = \chdv(G)$ does not contain vertices of degree 2 by construction. But it may happen that after some operations introduced in this section, the resulting multigraph also contains vertices of degree 2, see e.g.~Figure~\ref{fig:chdv-example-Smmax2}. We can eliminate them as described below. 

\begin{lemma}[Elimination of vertices of degree 2]\label{lem:removing-ver-deg-2}
      Suppose $\Gamma$ is a multigraph with binomial weights and $u\in V(\Gamma)$ with $\deg_{\Gamma}(u)=2$. If $u$ is a vertex with one loop and incident to no other edges, let $\Gamma' = \Gamma(u)$. If not, denote the  only incident edges to $u$ by $e_1=\{v_1,u\}$ and $e_2=\{v_2,u\}$and let $\Gamma'$ be obtained from $\Gamma$ by replacing $v_1-u-v_2$ by an edge $e'$ from $v_1$ to $v_2$ of weight $w_{\Gamma'}(e')=|w_{\Gamma}(e_1)+w_{\Gamma}(e_2)-1|$. Then $$\gapzvezdica(\Gamma)=\gapzvezdica(\Gamma').$$
\end{lemma}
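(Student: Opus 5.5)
The plan is to argue directly in the simple graph $\zeta(\Gamma)$ and repeatedly apply Lemma~\ref{lem:P4toP2}, which shortens an induced path through degree-$2$ vertices by two vertices without changing the gap. Since $\gapzvezdica(\Gamma)=\gap(\zeta(\Gamma))$ and $\gapzvezdica(\Gamma')=\gap(\zeta(\Gamma'))$, it suffices to show that $\zeta(\Gamma')$ arises from $\zeta(\Gamma)$ by a sequence of such shortenings. Each intermediate graph must lie in $\GnoFourCycle$, so that the equality part of Lemma~\ref{lem:P4toP2} applies.

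\textbf{Degenerate case.} First I dispose of the case where $u$ carries a single loop and no other edges. Then $\zeta(\Gamma)$ is the disjoint union of $\zeta(\Gamma(u))$ with one extra component. That component is $C_3$ if the loop has weight $0$ (since $\pathk{u}{u}{2}$ closes a triangle), and $C_6$ if it has weight $1$ (since $\pathk{u}{u}{5}$ closes a $6$-cycle). By Proposition~\ref{prop:from KBS24}~\eqref{prop:2.1(4)} both cycles have gap $0$. For a disjoint union, the gap is additive. The inequality $\le$ follows from Proposition~\ref{prop:G1G2}, because there are no edges between the two parts. The inequality $\ge$ follows by taking the union of optimal set-join covers of the parts. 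Hence $\gapzvezdica(\Gamma)=\gapzvezdica(\Gamma(u))$.

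\textbf{Main case: counting internal vertices.} Otherwise $u$ lies on two edges $e_1=\{v_1,u\}$ and $e_2=\{u,v_2\}$, possibly with $v_1=v_2$. In $\zeta(\Gamma)$, the vertex $u$ together with the subdivision vertices of $e_1$ and $e_2$ forms a walk $\pathk{v_1}{v_2}{m}$ whose internal vertices all have degree $2$. Since a $0$-edge contributes $2$ internal vertices and a non-loop $1$-edge contributes $3$, we get $m=w_1+w_2+5$, where $w_i=w_\Gamma(e_i)$. The new edge $e'$ contributes $m'$ internal vertices, where $m'$ is read off from the definition of $\zeta$ using $w(e')=|w_1+w_2-1|$; the loop case $v_1=v_2$ must be treated separately there. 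Checking the possibilities:
\begin{itemize}
\item $w_1=w_2=0$: $m=5$, and $m'=3$ if $v_1\ne v_2$, while $m'=5$ if $v_1=v_2$.
\item $w_1+w_2=1$: $m=6$ and $m'=2$.
\item $w_1=w_2=1$: $m=7$, and $m'=3$ if $v_1\ne v_2$, while $m'=5$ if $v_1=v_2$.
\end{itemize}
In every case $m-m'\in\{0,2,4\}$ and $m'\ge 2$. So $\zeta(\Gamma')$ is obtained from $\zeta(\Gamma)$ by at most two applications of the shortening in Lemma~\ref{lem:P4toP2}, each deleting two consecutive degree-$2$ internal vertices $x_1,x_2$ and joining their outer neighbours. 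All other parts of the graph are untouched, so the result agrees with $\zeta(\Gamma')$ up to isomorphism.

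\textbf{Main obstacle: staying inside $\GnoFourCycle$.} Every intermediate graph must be in $\GnoFourCycle$ for the equality in Lemma~\ref{lem:P4toP2} to hold. The shortenings never create loops, because at least $2$ internal vertices remain. A forbidden $C_4$ would have to use the shortened path, whose internal vertices have degree $2$. If $v_1\ne v_2$, such a $4$-cycle would need the path to have at most $2$ edges, or would need an edge $v_1v_2$. The path always keeps at least $2$ internal vertices, hence at least $3$ edges, and $v_1,v_2$ are never adjacent in any $\zeta$-image since every edge of $\Gamma$ is subdivided. If $v_1=v_2$, the path becomes a cycle of length $m'+1$ and the intermediate cycles have lengths $m+1,\dots,m'+1$ stepping by $2$: these are $7,5,3$ in the mixed-weight case and $8,6$ in the weight-$(1,1)$ case. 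None of them equals $4$.

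Therefore Lemma~\ref{lem:P4toP2} applies at each step, and $\gapzvezdica(\Gamma)=\gapzvezdica(\Gamma')$.
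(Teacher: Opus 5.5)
Your proof is correct and follows essentially the same route as the paper. In the single-loop case you split off a $C_3$ or $C_6$ component of gap $0$. Otherwise you compare the number of internal degree-$2$ vertices on the $v_1$--$v_2$ path in $\zeta(\Gamma)$ (which is $w_\Gamma(e_1)+w_\Gamma(e_2)+5$) with the number in $\zeta(\Gamma')$, and shorten by at most two applications of Lemma~\ref{lem:P4toP2}. Your explicit check that every intermediate graph stays loopless and in $\GnoFourCycle$, together with the justification of additivity of $\gap$ over disjoint unions, fills in details the paper leaves implicit.
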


\begin{proof}
If $u$ is a vertex with one loop and incident to no other edges, then $\zeta(\Gamma)\cong\zeta(\Gamma') \cup C_k$ where $k=3$ or $k=6$, so $\gap(\zeta(\Gamma))=\gap(\zeta(\Gamma'))$ by Proposition \ref{prop:from KBS24}~(\ref{prop:2.1(4)}). If not, the edges $e_1$ and $e_2$ are not loops since $\deg(u) =2$. It follows that $\zeta(\Gamma)$ contains $\pathk{v_1}{v_2}{\ell}$, where $\ell = w_{\Gamma}(e_1)+w_{\Gamma}(e_2)+5$. Let $e'\in E(\Gamma')$ be as in the statement. If $e'$ is a loop with $w_{\Gamma'}(e')=1$ then $\zeta(\Gamma')$ contains $\pathk{v_1}{v_2}{5}$. In this case $w_{\Gamma}(e_1)+w_{\Gamma}(e_2)$ is even, so $\gap(\zeta(\Gamma'))=\gap(\zeta(\Gamma))$  by Theorem \ref{lem:P4toP2}. In all other cases $\zeta(\Gamma')$ contains  $\pathk{v_1}{v_2}{m}$, where $m=w_{\Gamma'}(e')+2$, so again $\gap(\zeta(\Gamma'))=\gap(\zeta(\Gamma))$  by Theorem \ref{lem:P4toP2}. The conclusion follows.
\end{proof}

\subsection{Elimination of $1$-edges}

\begin{lemma}[$1$-edge contraction]\label{lem:1-edge-contraction}
Suppose $\Gamma$ is a multigraph with binomial weights and a non-looped edge $e$ of weight $1$. Let $\Gamma'$ be the edge contraction of $\Gamma$ along $e$. Then 
$$\gapzvezdica(\Gamma)=\gapzvezdica(\Gamma').$$
\end{lemma}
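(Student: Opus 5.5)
We prove $\stp(\zeta(\Gamma))=\stp(\zeta(\Gamma'))+4$. Write $e=\{u,v\}$ and $w$ for the vertex of $\Gamma'$ obtained by identifying $u$ and $v$. Since $w_\Gamma(e)=1$, the edge $e$ is replaced in $\zeta(\Gamma)$ by a path $u-x_1-x_2-x_3-v$. All other vertices and edges of $\zeta(\Gamma)$ correspond to those of $\zeta(\Gamma')$, with the incidences at $u$ and at $v$ merged into $w$. Hence $|V(\zeta(\Gamma))|=|V(\zeta(\Gamma'))|+4$, and the displayed rank identity gives $\gapzvezdica(\Gamma)=\gapzvezdica(\Gamma')$.

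We work with $\mathrm{ABC}$ covers, which exist by Proposition~\ref{prop:Types} because both graphs lie in $\GnoFourCycle$. In $\zeta$-graphs every neighbour of $u$, $v$ or $w$ is a degree-two subdivision vertex. Moreover, the neighbourhoods of $u$ and $v$ are disjoint and their union is $N[w]$.

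One technical point must be handled first. Edges of $\Gamma$ parallel to $e$ become loops at $w$, and loops at $u$ or $v$ stay loops at $w$. The subdivision lengths that $\zeta$ prescribes for these loops need not literally equal what the merged picture produces. For example, a parallel $1$-edge $u$--$v$ is a $3$-subdivided path, but it becomes a $1$-loop, which $\zeta$ subdivides $5$ times. I would dispose of this with Lemma~\ref{lem:P4toP2}, applied also with $u=v$, to change path lengths by $2$ without changing the gap. So we may assume $\zeta(\Gamma')$ is literally $\zeta(\Gamma)$ with the path $x_1x_2x_3$ deleted and $u,v$ identified. I expect this bookkeeping to be the main nuisance; the combinatorial core is straightforward.

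To prove $\stp(\zeta(\Gamma))\le\stp(\zeta(\Gamma'))+4$, start from an $\mathrm{ABC}$ cover $\C'$ of $\zeta(\Gamma')$.

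If $w$ is of Type C, keep $u$ and $v$ of Type C. In each component $N^\star[y]$ with $y\in N[w]$, replace $w$ by $u$ or $v$ according to which of them is adjacent to $y$. Then add the stars $\{x_1\}\vee\{u,x_2\}$ and $\{x_3\}\vee\{x_2,v\}$. This creates exactly four new components: $\{x_1\}$, $\{u,x_2\}$, $\{x_3\}$ and $\{x_2,v\}$.

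If $w$ is of Type A or B, make $u$ and $v$ of Type A. Replace $\{w\}\vee N^\star[w]$ by $\{u\}\vee\bigl((N^\star[w]\cap N[u])\cup\{x_1\}\bigr)$ and $\{v\}\vee\bigl((N^\star[w]\cap N[v])\cup\{x_3\}\bigr)$, and add $\{x_2\}\vee\{x_1,x_3\}$. Singleton stars $\{w\}\vee\{y\}$ are transferred to $u$ or $v$ as appropriate. The components $\{w\},N^\star[w]$ are thereby replaced by $\{u\},\{v\}$, the two enlarged sets, $\{x_2\}$ and $\{x_1,x_3\}$, a net gain of $4$.

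For the reverse inequality, take an $\mathrm{ABC}$ cover $\C$ of $\zeta(\Gamma)$. Lemma~\ref{lem:remove-x1-x2}, applied to $x_1x_2$ and to $x_2x_3$, forces one of three type patterns on $x_1,x_2,x_3$.

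If the pattern is B,B,B, then $u$ and $v$ are singletons. Delete the three stars along the path and merge $\{u\},\{v\}$ into $\{w\}$. This may leave two stars centred at $w$, which is still a valid set-join cover, so the count drops by $4$.

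If the pattern is C,A,C, then $u$ and $v$ are of Type A. Delete $\{x_2\}\vee\{x_1,x_3\}$, remove $x_1$ and $x_3$ from the components $N^\star[u]$ and $N^\star[v]$, and merge $u,v$ into $w$. The count drops by at least $4$.

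If the pattern is A,C,A, then $u$ and $v$ are of Type C. Delete the four components $\{x_1\},\{u,x_2\},\{x_3\},\{x_2,v\}$, and rename $u$ and $v$ to $w$ inside the components $N^\star[y]$. No component contains both $u$ and $v$, because their neighbourhoods are disjoint, so this creates no collisions. The count again drops by $4$.

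In each case no spurious edges are created, because every other edge at $u$ or $v$ goes to a distinct subdivision vertex. This gives $\stp(\zeta(\Gamma'))\le\stp(\zeta(\Gamma))-4$, which together with the first inequality proves the identity.
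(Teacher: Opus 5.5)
Your strategy is the paper's: ABC covers on both sides, with a case analysis on the type of $w$ and on the type pattern of $x_1,x_2,x_3$. However, the count fails in the Type B case of $\stp(\zeta(\Gamma))\le\stp(\zeta(\Gamma'))+4$. If $w$ is of Type B, then $N^{\star}_{\C'}[w]=\emptyset$ is not a component, so only $\{w\}$ disappears. Your stars $\{u\}\vee\{x_1\}$, $\{v\}\vee\{x_3\}$, $\{x_2\}\vee\{x_1,x_3\}$ create six components, $\{u\},\{v\},\{x_1\},\{x_3\},\{x_2\},\{x_1,x_3\}$, so the net gain is $5$, not $4$. This case cannot be avoided. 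Take $\Gamma$ to be a $1$-edge $uv$ with two $0$-loops at $u$ and one at $v$. Then $\zeta(\Gamma')$ is three triangles sharing $w$. Its only ABC cover uses singletons everywhere, so $w$ is forced to be of Type B and $\stp(\zeta(\Gamma'))=7$. Your construction then only yields $\stp(\zeta(\Gamma))\le 12$, while the required (and true) value is $11$. Here you must use the B,B,B pattern, as the paper does: keep the singleton stars, now centred at $u$ or $v$, and add $\{u\}\vee\{x_1\}$, $\{x_1\}\vee\{x_2\}$, $\{x_2\}\vee\{x_3\}$, $\{x_3\}\vee\{v\}$. This adds exactly $\{u\},\{v\},\{x_1\},\{x_2\},\{x_3\}$. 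A similar issue arises in your C,A,C case for the reverse inequality. Deleting $\{x_2\}\vee\{x_1,x_3\}$ and identifying $\{u\}$ with $\{v\}$ saves only $3$ components. You reach $4$ only if you also merge the two stars at $w$ into $\{w\}\vee\bigl((N^{\star}_{\C}[u]\cup N^{\star}_{\C}[v])\setminus\{x_1,x_3\}\bigr)$. In your B,B,B case, by contrast, leaving two stars at $w$ is harmless.

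The preliminary reduction for parallel $1$-edges only works in one direction. Lemma~\ref{lem:P4toP2} gives equality only when the shortened graph lies in $\GnoFourCycle$. So you may lengthen each $3$-subdivided path of a parallel $1$-edge in $\zeta(\Gamma)$ to a $5$-subdivided one. After that, deleting $x_1,x_2,x_3$ and identifying $u,v$ gives exactly $\zeta(\Gamma')$. The modified graph is still in $\GnoFourCycle$ with the same local structure at $u$ and $v$, which is all your ABC argument uses. You may not instead shorten the $6$-cycles at $w$ in $\zeta(\Gamma')$. That produces $4$-cycles, ABC covers are no longer available, and the identity actually fails. For two parallel $1$-edges, $\zeta(\Gamma)=C_8$ and the merged picture is $C_4$, with $\stp(C_8)=8\neq\stp(C_4)+4=6$. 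Your remark ``applied also with $u=v$'' suggests the second reading, so make the first explicit. The paper avoids the issue by removing an $8$-cycle from $\zeta(\Gamma)$ and a $6$-cycle through $w$ from $\zeta(\Gamma')$ via Corollary~\ref{cor:remove-C2k}. This reduces both sides to the same graph $G_0$, up to extra components $P_3$ versus $P_5$, which have equal gap.
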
  

\begin{proof}
Let $\Gamma$ be a multigraph, and $\Gamma'$ its edge contraction along the edge $e= \{u,v\}$ of weight 1 with $u\ne v$. Let $G=\zeta(\Gamma)$ and $G'=\zeta(\Gamma')$. We denote by $w$ the vertex of $\Gamma'$ obtained from the edge $e$ and $u-s_1-s_2-s_3-v$ the path in $G$ obtained from $e$. We claim: $\gap(G)=\gap(G')$.

Assume first that there are $k$ $1$-edges between $u$ and $v$ in $\Gamma$, where $k\ge 2$. In this case, the path $u-s_1-s_2-s_3-v$ is part of an $8$-cycle $u-s_1-s_2-s_3-v-t_3-t_2-t_1-u$ in $G$, where $u$ and $v$ are the only vertices of this cycle that can have a degree greater than $2$ in $G$.  By Corollary~\ref{cor:remove-C2k} we have $\gap(G) = \gap(G_1)$, where 
$G_1 = G(\{u,s_1,s_2,s_3,v,t_3,t_2,t_1\})$. Note that $G_1 \cong G_0 \cup (k-2)P_3$ for some graph $G_0$. In particular, $\gap(G) = \gap(G_0) +k-2$.

On the other hand, while the edge $e$ is contracted in $\Gamma'$, the remaining $k-1$ $1$-edges between $u$ and $v$ become $1$-loops at $w$ in $\Gamma'$. Those edges correspond to $6$-cycles in $G'$ that have $w$ as the only vertex of degree greater than $2$. Let $w-t'_1-t'_2-t'_3-t'_4-t'_5-w$ be one of those cycles, and let $G'_1 = G'(\{u,t'_1,t'_2,t'_3,t'_4,t'_5\})$. By Corollary~\ref{cor:remove-C2k} we have $\gap(G') = \gap(G'_1)$. Observing that $G'_1 \cong G_0 \cup (k-2)P_5$, with the graph $G_0$ the same as above, we conclude $\gap(G') = \gap(G_0) +k-2 = \gap(G)$, as desired.

From now on, we assume that there is only one $1$-edge between $u$ and $v$ in $\Gamma$. In this case $V(G')=V(G)\setminus\{u,s_1,s_2,s_3,v\}\cup\{w\}$, hence $|V(G')|=|V(G)|-4$, and to establish our claim we need to prove $\stp(G)=\stp(G') +4$.

We will first prove that $\stp(G')\le\stp(G)-4$. Let $\C$ be an ABC cover of $G$. We consider different cases, depending on the type of vertex $s_2$ with respect to $\C$.  

\noindent The vertex $s_2$ is of {\bf Type A}. In this case $\{s_2\}\vee\{s_1,s_3\} \in \C$, hence $s_1$ and $s_3$ are of Type C, and $u$ and $v$ are of Type A. In particular, $\C$ contains the following set-joins: 
$$\{u\} \vee N^{\star}_{\C}[u], \,\{v\} \vee N^{\star}_{\C}[v],\, \{s_2\}\vee\{s_1,s_3\}.$$
Let $V_R=\{\{u\},  N^{\star}_{\C}[u],\{v\},  N^{\star}_{\C}[v], \{s_2\}, \{s_1,s_3\}\}$. We define 
$\C'$ to be the set that contains all set-joins of $\C$ that do not involve any of the components from $V_R$, as well as
$\{w\}\vee ( N^{\star}_{\C}[u]\cup N^{\star}_{\C}[v])$, and $\{w\}\vee \{y\}$ for $y \in  N^1_{\C}[u]\cup  N^1_{\C}[v]$. In particular, 
$$V(\C')=(V(\C)\setminus V_R)\cup \{\{w\}, N^{\star}_{\C}[u]\cup N^{\star}_{\C}[v] \},$$
and $|V(\C')|=|V(\C)|-4$.

\noindent The vertex $s_2$ is of {\bf Type B}. In this case $\{s_2\}\vee\{s_1\},\{s_2\}\vee\{s_3\} \in \C$, and $s_1$ and $s_3$ are of Type B. Hence, $\{u\}\vee\{s_1\},\{v\}\vee\{s_3\} \in \C$, and $u$ and $v$ are either of Type A or of Type B. Let $V_R=\{\{u\},\{v\},\{s_1\},\{s_2\},\{s_3\},N^{\star}_{\C}[u], N^{\star}_{\C}[v] \}$, where $N^{\star}_{\C}[u]$ or $N^{\star}_{\C}[v]$ may be empty. We define 
$\C'$ to be the set that contains all set-joins of $\C$ that do not involve any of the components from $V_R$, as well as
$\{w\}\vee ( N^{\star}_{\C}[u]\cup N^{\star}_{\C}[v])$ (provided $N^{\star}_{\C}[u]\cup N^{\star}_{\C}[v] \neq \emptyset$), and $\{w\}\vee \{y\}$ for $y \in  (N^1_{\C}[u]\setminus \{s_1\})\cup  (N^1_{\C}[v]\setminus \{s_3\})$. 
In particular, 
$$V(\C')=(V(\C)\setminus V_R)\cup \{\{w\}, N^{\star}_{\C}[u]\cup N^{\star}_{\C}[v] \},$$
and $|V(\C')|\leq|V(\C)|-4$. 

\noindent The vertex $s_2$ is of {\bf Type C}. In this case $\{s_1\}\vee\{u,s_2\},\{s_3\}\vee\{s_2,v\} \in \C$, and $u$, $v$ and $s_2$ are of Type C. Let 
$$
V_R=\{\{s_1\},\{s_3\},\{u,s_2\},\{s_2,v\}\}
\cup \{N^{\star}_{\C}[s], s \in N_G[u]\setminus\{s_1\}\}
\cup \{N^{\star}_{\C}[t], t \in N_G[v]\setminus\{s_3\}\}.
$$
Let $\C'$ be the set that contains all set-joins in $\C$ that do not involve any of the components in $V_R$, as well as
\begin{align*}   
 &\{s\}\vee ((N^{\star}_{\C}[s]\setminus \{u\})\cup \{w\}),\, s \in N_G[u]\setminus\{s_1\}, \text{ and }\\
&\{t\}\vee ((N^{\star}_{\C}[t]\setminus \{v\})\cup \{w\}), t \in N_G[v]\setminus\{s_3\}.
\end{align*}
In particular, $|V(\C')|=|V(\C)|-4$, as desired. 

Now we will prove that $\stp(G)\le\stp(G') +4$. Let $\C'$ be an ABC cover of $G'$. We consider different cases, depending on the type of vertex $w$ with respect to $\C'$. We will construct a set-join cover $\C$ of $G$ with $|V(\C)| \le |V(\C')| +4$. Let
$$ \widehat{\C'} = \{\mc K \vee \mc L \in \C', w\notin \mc K \cup \mc L\}.$$
Note that $\C' = \widehat{\C'} \cup \C'[w]$. 

\noindent The vertex $w$ is of {\bf Type A}. In this case, let 
$$ \C = \widehat{\C'} \cup \{\{u\} \vee N_G[u], \{v\} \vee N_G[v], \{s_2\} \vee \{s_1, s_3\}\}.$$
Since any component of cardinality at least 2 appears only once in $\C'$ by Proposition \ref{prop:gen4cycle}~\eqref{lem:2a}, we have $|V(\widehat{\C'})| \le |V(\C')| -2$ ($\{w\}$ and $N^{\star}_{\C'}[w]$ do not appear in $\widehat{\C'}$). Since $\C'[w]$ cover only edges adjacent to $w$ in $G'$, $\widehat{\C'}$ covers all edges in $G$ that are not adjacent to $u,s_1,s_2,s_3$, or $v$. Furthermore, $\{\{u\} \vee N_G[u], \{v\} \vee N_G[v], \{s_2\} \vee \{s_1, s_3\}\}$ covers all edges in $G$ that are adjacent to $u,s_1,s_2,s_3$, or $v$. So $\C$ is a 
a set-join cover of $G$ with 
\begin{align*}
|V(\C)| &\le |V(\widehat{\C'})| + |V(\{\{u\} \vee N_G[u], \{v\} \vee N_G[v], \{s_2\} \vee \{s_1, s_3\}\})| \\
&\le  |V(\C')| - 2 + 6 = |V(\C')| + 4.    
\end{align*}

\noindent The vertex $w$ is of {\bf Type B}. In this case $\C'[w] = \{\{w\} \vee \{x\}, x \in N_{G'}[w]\}$. Note that any $x \in N_{G'}[w]$ has $\deg_{G'}(x) = 2$, so it is of Type B and thus $\{x\} \in \widehat{\C'}$. Let
$$
\C = \widehat{\C'} \cup \{\{u\} \vee \{x\}, x \in N_{G}[u]\} \cup \{\{v\} \vee \{x\}, x \in N_{G}[v]\} 
   \cup \{\{s_1\} \vee \{s_2\}, \{s_2\} \vee \{s_3\}\}.
$$
We have $|V(\widehat{\C'})| = |V(\C')| -1$ ($\{w\}$ does not appear in $\widehat{\C'}$). As in the previous case, $\widehat{\C'}$ covers all edges in $G$ that are not adjacent to $u,s_1,s_2,s_3$, or $v$ and $\{\{u\} \vee \{x\}, x \in N_{G}[u]\} \cup \{\{v\} \vee \{x\}, x \in N_{G}[v]\} \cup \{\{s_1\} \vee \{s_2\}, \{s_2\} \vee \{s_3\}\}$ covers all edges in $G$ that are adjacent to $u,s_1,s_2,s_3$, or $v$. Since $N_{G}[u] \cup N_{G}[v] = N_{G'}[w]\cup\{s_1, s_3\}$ we have
$$
|V(\C)| \le |V(\widehat{\C'})| + |\{\{u\},\{s_1\},\{s_2\},\{s_3\},\{v\}\}| 
=  |V(\C')| - 1 + 5 = |V(\C')| + 4.    
$$
The vertex $w$ is of {\bf Type C}. In this case $\C'[w] = \{\{x\} \vee N^{\star}_{\C'}[x], x \in N_{G'}[w]\}$. 
Let 
\begin{align*}
\C_2 &= \{\{x\} \vee N^{\star}_{\C'}[x]\cup\{u\}\setminus\{w\}, x \in N_G[u]\},  \\   
\C_3 &= \{\{x\} \vee N^{\star}_{\C'}[x]\cup\{v\}\setminus\{w\}, x \in N_G[v]\}, \text{ and} \\
\C &= \widehat{\C'} \cup \C_2 \cup \C_3 \cup \{\{s_1\} \vee \{u, s_2\}, \{s_3\} \vee \{v, s_2\}\}.
\end{align*}
Since any component of cardinality at least 2 appears only once in $\C'$, we have $|V(\widehat{\C'} \cup \C_2 \cup \C_3)| = |V(\C')|$. Furthermore, $\widehat{\C'} \cup \C_2 \cup \C_3$ covers all edges in $G$ except those on the path $u-s_1-s_2-s_3-v$ and $\{\{s_1\} \vee \{u, s_2\}, \{s_3\} \vee \{v, s_2\}\}$ covers all edges on the path $u-s_1-s_2-s_3-v$. 
So $\C$ is a set-join cover of $G$ with $|V(\C)| = |V(\C')| +4$, which finishes the proof.
\end{proof}

\begin{lemma}[Elimination of $1$-loops]\label{lem:removing-1-loops}
Suppose $\Gamma$ is a multigraph with binomial weights and suppose there exists $v\in V(\Gamma)$ with a loop $e$ of weight $w_{\Gamma}(e)=1$. Then
$$\gapzvezdica(\Gamma)=\gapzvezdica((\Gamma\setminus\{e\})_{-v})=\gapzvezdica(\Gamma_{-v})-1.$$
\end{lemma}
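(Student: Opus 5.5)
We need to prove $\gapzvezdica(\Gamma)=\gapzvezdica((\Gamma\setminus\{e\})_{-v})=\gapzvezdica(\Gamma_{-v})-1$.

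The second equality comes straight from the definition of the operation $\Gamma_{-v}$. The only difference between $\Gamma_{-v}$ and $(\Gamma\setminus\{e\})_{-v}$ is the $1$-loop $e$. In $\Gamma_{-v}$ this loop produces one extra isolated vertex $v'_i$; in $(\Gamma\setminus\{e\})_{-v}$ it produces nothing. So $\zeta(\Gamma_{-v})\cong\zeta((\Gamma\setminus\{e\})_{-v})\cup K_1$. The gap is additive over disjoint unions, since optimal set-join covers of the components can be combined. As $\gap(K_1)=1$, we get $\gapzvezdica(\Gamma_{-v})=\gapzvezdica((\Gamma\setminus\{e\})_{-v})+1$.

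The main claim is the first equality. I will work in $G=\zeta(\Gamma)$. There the loop $e$ becomes a $6$-cycle $v-x_1-x_2-x_3-x_4-x_5-v$, and $x_1,\dots,x_5\in V_{d=2}(G)$. If I index this cycle so that $v$ has an odd index, all even-indexed vertices have degree two. Corollary~\ref{cor:remove-C2k} then gives $\gap(G)=\gap(G(\{v,x_1,\dots,x_5\}))$. Deleting the whole cycle from $\zeta(\Gamma)$ is the same as deleting $v$ from $\zeta(\Gamma\setminus\{e\})$, because the cycle's internal vertices disappear along with $v$. So
\[
\gap(\zeta(\Gamma))=\gap(\zeta(\Gamma\setminus\{e\})(v)).
\]
Lemma~\ref{lem:zetaGamma-v} applied to $\Gamma\setminus\{e\}$ identifies the right-hand side with $\gapzvezdica((\Gamma\setminus\{e\})_{-v})$, which finishes the proof.

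The main thing to check is that Corollary~\ref{cor:remove-C2k} really applies. It needs the $6$-cycle to be an induced subgraph of $G$, and it needs $G\in\GnoFourCycle$ as used in its proof through Proposition~\ref{prop:G1G2}. Both hold, because $\zeta$ turns every edge into a path of length at least three, so $\zeta(\Gamma)$ has no $4$-cycles and no loops. The cycle is induced because its internal vertices have degree two in $G$. There are also degenerate cases: $v$ may carry other loops, or be incident to leaves. Lemma~\ref{lem:zetaGamma-v} is stated for an arbitrary vertex, so it already covers these, and no separate treatment is needed.
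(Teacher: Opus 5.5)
Your proposal is correct and follows the paper's proof essentially step for step. You remove the induced $6$-cycle coming from $e$ via Corollary~\ref{cor:remove-C2k}, identify what remains with $\zeta(\Gamma\setminus\{e\})(v)$ and apply Lemma~\ref{lem:zetaGamma-v}, then obtain the second equality from $\Gamma_{-v}\cong(\Gamma\setminus\{e\})_{-v}\cup K_1$. (Minor remark: Corollary~\ref{cor:remove-C2k} does not actually need $G\in\GnoFourCycle$, since the degree-two condition on alternate cycle vertices already gives the hypothesis of Proposition~\ref{prop:G1G2}, but your stronger check is harmless.)
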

\begin{proof}
Observe that $\zeta(\Gamma)$ contains an induced subgraph $G_1$ isomorphic to $C_6$ with one of the vertices $v$ and all other vertices of degree 2. By Corollary~\ref{cor:remove-C2k} we have $\gap(G) = \gap(G_2)$, where $G_2 = G(V(G_1))$. Furthermore, $\gap(G_2) = \gap(\zeta((\Gamma\setminus\{e\})_{-v}))$ by Lemma~\ref{lem:zetaGamma-v}, and the first equality follows. Note that $\Gamma_{-v} \cong (\Gamma\setminus\{e\})_{-v} \cup K_1$, so we obtain also the second equality.
\end{proof}

\begin{example}\label{ex:1-edge-contraction} 
Reconsider weighted multigraph $\Gamma = \chdv(G)$ from Figure~\ref{fig:compressed-hdv-graph} (see also Figure~\ref{fig:chdv-example}). After a sequence of $1$-edge contractions of blue edges, we obtain multigraph graph $\Gamma'$ in Figure~\ref{fig:1-contracted chdv-example}, and by Lemma~\ref{lem:1-edge-contraction} we have $\gapzvezdica(\Gamma)=\gapzvezdica(\Gamma')$. Now the high degree vertex $v$ of $\Gamma' $ has 3 loops of weight $1$, 2 loops of weight $0$, and a pendent vertex. Let $e$ be one of the loops of weight $1$. Using Lemma~\ref{lem:removing-1-loops} we have $\gapzvezdica(\Gamma)=\gapzvezdica(\Gamma'\setminus\{e\})_{-v}$, where the later graph consists of three copies of $K_1$, as shown in~Figure~\ref{fig:1-contracted chdv-example1}. Thus, $$\gapzvezdica(\Gamma)=\gapzvezdica(\Gamma')=\gapzvezdica((\Gamma'\setminus\{e\})_{-v})=\gapzvezdica(3 K_1)=\gap(3 K_1)=3.$$

\begin{figure}[h]
\centering
\subfigure[$\Gamma$]{\begin{tikzpicture}[scale=0.6]\label{fig:chdv-example}
\tikzset{
every node/.style={inner sep=2pt}
}
\path (3,3) edge node [above]  {$0$} (7,3)  edge [bend left,color=cyan] node [left] {$1$} (3,6) edge [bend right] node [right] {$1$} (3,6);
\path (7,6) edge [color=cyan] node [above]  {$1$} (3,6)  edge [bend left,color=cyan] node [right] {$1$} (7,3) edge [bend right] node [left] {$1$} (7,3);
\path (7,3) edge [color=cyan] node [right]  {$1$} (7,1.5) ; 
\path (3,6) edge node [above]  {$0$} (1.5,6) ;
\tikzset{every loop/.style={min distance=23mm,in=120,out=60,looseness=50}}
\path (3,6) edge  [loop above] node {$1$} ();
\tikzset{every loop/.style={min distance=23mm,in=240,out=300,looseness=50}}
\path (3,3) edge  [loop below] node {$0$} ();
\tikzset{
every node/.style={draw, circle, inner sep=2pt}
}
\foreach \i in {(7,6),(3,6),(3,3),(7,3),(7,1.5)} {
    \node[fill=cyan] at \i {}; }
    \foreach \i in {(1.5,6)} {
    \node[fill=black] at \i {}; }
\end{tikzpicture}}
\subfigure[$\Gamma'$]{\begin{tikzpicture}[scale=0.6]\label{fig:1-contracted chdv-example}
\tikzset{
every node/.style={inner sep=2pt}
}
\path (-1.5,0) edge node [above]  {$0$} (0,0) ;
\tikzset{every loop/.style={min distance=45mm,in=35,out=145,looseness=50}}
\path (0,0) edge  [loop above] node {$1$} ();
\tikzset{every loop/.style={min distance=25mm,in=40,out=140,looseness=50}}
\path (0,0) edge  [loop above] node {$1$} ();
\tikzset{every loop/.style={min distance=15mm,in=60,out=120,looseness=50}}
\path (0,0) edge  [loop above] node [below] {$1$} ();
\tikzset{every loop/.style={min distance=25mm,in=-40,out=-140,looseness=50}}
\path (0,0) edge  [loop below] node [below] {$0$} ();
\tikzset{every loop/.style={min distance=15mm,in=-60,out=-120,looseness=50}}
\path (0,0) edge  [loop below] node [above] {$0$} ();
\tikzset{
every node/.style={draw, circle, inner sep=2pt}
}
 \node[fill=cyan] at (0,0) {}; 
 \node[fill=black] at (-1.5,0) {};
\phantom{  \node[fill=cyan] at (0,-3) {}; 
\node[fill=black] at (2,-3) {}; 
\node[fill=black] at (-2,-3) {}; }
\end{tikzpicture}}
\subfigure[$3 K_1$]{\begin{tikzpicture}[scale=0.6]\label{fig:1-contracted chdv-example1}
 \node[fill=black] at (0,2) {}; 
\node[fill=black] at (0,3.5) {}; 
\node[fill=black] at (-1.5, 2) {}; 
\phantom{  \node[fill=black] at (0,-1) {}; 
\node[fill=black] at (2,-1) {}; 
\node[fill=black] at (-2,-1) {}; 
}
\end{tikzpicture}}
\caption{Graph $\Gamma'$ is obtained from $\Gamma$ by a sequence of four edge contractions (denoted by blue), and  $(\Gamma'\setminus\{e\})_{-v}=3 K_1$ is obtained by further elimination of a $1$-loop $e$ of $\Gamma'$.}\label{fig:complete-1-edge-contraction-Smmax}
\end{figure}

Observe that we used a different method than in Example~\ref{ex:5.9} to obtain the same result.
\end{example}

\subsection{Elimination of parallel $0$-edges}

\begin{lemma}[Elimination of  parallel $0$-edges] \label{lem:parallel-0-edges} 
Let $\Gamma$ be a multigraph with binomial weights containing $k\ge 2$ $0$-edges between vertices $u$ and $v$ (where we allow $u=v$), and let $\Gamma'$  be obtained from $\Gamma$ by removing $k-1$ $0$-edges between those two vertices. Then
$$\gapzvezdica(\Gamma)=\gapzvezdica(\Gamma').$$
\end{lemma}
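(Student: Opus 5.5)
We prove the case $k=2$; the general statement then follows by removing the extra parallel $0$-edges one at a time. Set $G=\zeta(\Gamma)$ and $G'=\zeta(\Gamma')$.

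In $G$ the two parallel $0$-edges between $u$ and $v$ become walks $u-x_1-x_2-v$ and $u-y_1-y_2-v$. The four internal vertices all have degree $2$. Deleting $y_1,y_2$ from $G$ gives exactly $G'$, so $|V(G)|=|V(G')|+2$. Hence it suffices to prove
$$\stp(G)=\stp(G')+2.$$
Corollary~\ref{cor:remove-C2k} cannot be applied to the $6$-cycle $u-x_1-x_2-v-y_2-y_1-u$: whichever alternating class of its vertices we take, it contains $u$ or $v$, and these need not have degree $2$. So we argue directly with $\mathrm{ABC}$ covers. This works because both $G$ and $G'$ lie in $\GnoFourCycle$, and so $\mathrm{ABC}$ covers exist by Proposition~\ref{prop:Types}.

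\textbf{Upper bound $\stp(G)\le \stp(G')+2$.} Let $\C'$ be an $\mathrm{ABC}$ cover of $G'$. By Lemma~\ref{lem:remove-x1-x2} applied to $u-x_1-x_2-v$, there are two possibilities.
\begin{itemize}
\item $x_1,x_2$ are both of Type B. Then $u,v\in V_1(\C')$. We add the three set-joins $\{u\}\vee\{y_1\}$, $\{y_1\}\vee\{y_2\}$ and $\{y_2\}\vee\{v\}$. These introduce only the two new components $\{y_1\}$ and $\{y_2\}$.
\item One of $x_1,x_2$ is of Type A and the other of Type C. By symmetry we may take $x_1$ of Type A and $x_2$ of Type C. Then $u$ is of Type C and $v$ is of Type A. We replace $\{v\}\vee N^{\star}_{\C'}[v]$ by $\{v\}\vee (N^{\star}_{\C'}[v]\cup\{y_2\})$ and add $\{y_1\}\vee\{u,y_2\}$. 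The enlarged component replaces the old one, and the new components are $\{y_1\}$ and $\{u,y_2\}$.
\end{itemize}
In both cases the result is a set-join cover of $G$ of order at most $|V(\C')|+2$.

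\textbf{Lower bound $\stp(G')\le \stp(G)-2$.} Let $\C$ be an $\mathrm{ABC}$ cover of $G$, and apply Lemma~\ref{lem:remove-x1-x2} to both walks. The two walks must be typed compatibly, because the types of $u$ and $v$ are forced by each walk separately.
\begin{itemize}
\item If one walk is of Type B, then $u,v\in V_1(\C)$. The other walk cannot be of type A/C, since that would force $u$ or $v$ to be of Type C. So the $y$-walk is also of Type B, and $\{y_1\},\{y_2\}\in V(\C)$. We delete every set-join containing $y_1$ or $y_2$. The remaining set-joins still cover all edges of $G'$, and the number of components drops by $2$.
\item If both walks are of type A/C, then consistency with the types of $u$ and $v$ forces, up to symmetry, $u$ of Type C, $v$ of Type A, $y_1$ of Type A and $y_2$ of Type C. We delete $\{y_1\}\vee\{u,y_2\}$ and replace $N^{\star}_{\C}[v]$ by $N^{\star}_{\C}[v]\setminus\{y_2\}$. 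This component still contains $x_2$, so it is nonempty. The components $\{y_1\}$ and $\{u,y_2\}$ are lost, and by Proposition~\ref{prop:gen4cycle}~\eqref{lem:2a} neither appears elsewhere in $\C$.
\end{itemize}
Shrinking a component can only merge it with an existing one, which lowers the count further. So we obtain a cover of $G'$ of order at most $|V(\C)|-2$.

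\textbf{Loop case $u=v$.} Here Lemma~\ref{lem:remove-x1-x2} forces all of $x_1,x_2,y_1,y_2$ to be of Type B, so only the Type B arguments above are needed, and they work verbatim.

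\textbf{Main obstacle.} The delicate step is the case analysis on the types in the lower bound. One has to check that the two walks really must be typed compatibly. One also has to check that deleting $y_1,y_2$ and adjusting $N^{\star}_{\C}[v]$ removes exactly two distinct components without uncovering any edge of $G'$. The identity $N^{\star}_{\C}[v]=N_G[v]\setminus V_1(\C)$ is what guarantees that no such edge is lost.
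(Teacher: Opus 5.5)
Your proposal is correct and follows the paper's proof essentially step for step. Like the paper, you reduce to deleting a single parallel edge and compare $\mathrm{ABC}$ covers of $G$ and $G(\{y_1,y_2\})$ via Lemma~\ref{lem:remove-x1-x2}, using the same two constructions in each direction: the three singleton joins in the Type B case, and in the Type A/C case the modification of $\{v\}\vee N^{\star}[v]$ together with the set-join $\{y_1\}\vee\{u,y_2\}$.

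Two small phrasing points. First, your one-edge argument never uses that there are exactly two parallel edges, so it is really the general inductive step rather than just ``the case $k=2$''. Second, that $\{y_1\}$ occurs in no other set-join follows from the Type A description $\C[y_1]=\{\{y_1\}\vee\{u,y_2\}\}$. It does not follow from item (2)(a) of Proposition~\ref{prop:gen4cycle}, which only concerns components of size at least two.
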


\begin{proof}
 Let $\Gamma_1$ be the multi-graph obtained from $\Gamma$ by removing one edge between $u$ and $v$. It is sufficient to prove $\gap(\zeta(\Gamma)) = \gap(\zeta(\Gamma_1))$. Indeed, if there are more than two edges $u$ between $v$, we can repeatedly use this result to reduce $\Gamma$ to $\Gamma'$. 

 Let $G=\zeta(\Gamma)$, and note that $\zeta(\Gamma_1)=G(\{y_1,y_2\})$, where $y_1,y_2$ arise from the edge which is removed from $\Gamma$ to obtain $\Gamma_1$. The graph $G$ contains $u-x_1-x_2-v-y_2-y_1-u$, where $x_1,x_2,y_1,y_2 \in V_{d=2}(G)\setminus\{u,v\}$ are distinct vertices. Note that we allow $u=v$ here.
 
To prove $\stp(G)\leq \stp(G(\{y_1,y_2\}))+2$, assume that $\C'$ is an $\mathrm{ABC}$ cover of $G(\{y_1,y_2\})$. Using Lemma~\ref{lem:remove-x1-x2} for $H=G(\{y_1,y_2\})$ and symmetry, we need to consider two cases.

\noindent{\bf Case 1:} With respect o $\C'$, $x_1$ is of Type A and $x_2$ is of Type C. Then $u$ is of Type C and $v$ of Type A. In this case we have $\{x_1\} \vee \{u,x_2\}, \{v\} \vee N^{\star}_{\C'}[v] \in \C'$ and $x_2 \in N^{\star}_{\C'}[v]$. To construct a cover $\C$ of $G$ from $\C'$ we replace  $\{v\} \vee N^{\star}_{\C'}[v]$ by $\{v\} \vee (N^{\star}_{\C'}[v]\cup\{y_2\})$ and add $\{y_1\}\vee \{u,y_2\}$. Since $|V(\C)|=|V(\C')|+2$ the inequality holds.

\noindent{\bf Case 2:} Both $x_1$ and $x_2$ are of Type B in $\C'$. In this case $u,v,x_1,x_2 \in V_1(\C')$. A cover  $\C$ of $G$ is obtained by adding the set-joins $\{u\}\vee \{y_1\}$,$\{y_1\}\vee \{y_2\}$,$\{y_2\}\vee \{v\}$ to $\C'$. Again we have $|V(\C)|=|V(\C')|+2$.

To prove $\stp(G)\geq \stp(G(\{y_1,y_2\}))+2$, assume that $\C$ is an $\mathrm{ABC}$ cover of $G$. Using Lemma~\ref{lem:remove-x1-x2} for $H=G$ and symmetry, we need to consider two cases.

\noindent{\bf Case 1:} With respect o $\C$, $x_1$ is of Type A and $x_2$ is of Type C. Since $u$ is of Type C, $y_1$ must be of Type A, and  $y_2$ must be of Type C. In particular, $\C$ contains $\{y_1\}\vee \{y_2,u\}$. Removing this set-join from $\C$ reduces the number of components by $2$. A set join cover $\C'$ of $G(\{y_1,y_2\})$ is obtained by also removing $y_2$ from $N_G^{\star}[v]$.

\noindent{\bf Case 2:} Both $x_1$ and $x_2$ are of Type B in $\C$. In this case $u,v,x_1,x_2 \in V_1(\C)$.  This implies that $y_1$ and $y_2$ are of Type B, hence $\{y_i\}$, $i=1,2$, are components in $\C$. Removing these two components, and the three set-joins that use them, gives us a set-join cover $\C'$ of  $G(\{y_1,y_2\})$ with $|V(\C')|=|V(\C)|-2$. 
\end{proof}

\begin{example}\label{ex:garlic}
Let $G=\garlic(k_1,\ldots,k_t)$ be the graph on $|V(G)|=2+\sum_{i\in[t]} k_i$ vertices, $t\geq 2$ and $k_i\geq 2$, obtained as a vertex identification of all leaves of a generalised star graph $\gstar(k_1+1,\ldots,k_t+1)$ into a single vertex. We claim that 
$$\gap(G)= \max\{|\{i\colon k_i \text{ odd}\}|-2, 0\}.$$
Let $o=|\{i\colon k_i \text{ odd}\}|$ and $e=t-o$. The multigraph $\Gamma=\kappa(G)$ contains two vertices $u$ and $v$, and $o$ edges of weight 1 and $e$ edges of weight 0 between $u$ and $v$. By Lemma \ref{lem:parallel-0-edges} we may assume that $e$ is equal to 0 or 1. If $o=0$, then $e=1$ and $\zeta(\Gamma) = P_4$, so $\gap(G)=0$. If $o>0$, then using $1$-edge contraction we obtain a multigraph $\Gamma'$ with only one vertex $v$ and and $o-1$ loops of weight 1 and $e$ loops of weight 0 on it, which by Lemma~\ref{lem:1-edge-contraction} has $\gapzvezdica(\Gamma)=\gapzvezdica(\Gamma')$. If $o=1$, then $\gapzvezdica(\Gamma')=0$, so $\gap(G)=0$ again. If $o>1$, then the graph $\Gamma'_{-v}$ consists of $o-1$ vertices of degree $0$, so by Lemma~\ref{lem:removing-1-loops} $\gapzvezdica(\Gamma')=o-2$, and the claim follows. 
\end{example}

\section{Reduction of a multigraph $\Gamma$ to $\tau(\Gamma)$}\label{sec:tau}

We have found several operations that reduce the computation of $\gapzvezdica(\Gamma)$ to a simplified multigraph. In Example~\ref{ex:huge-gamma-chapter-4} we give an example of a reduction of a multigraph $\Gamma$ to a smaller graph with the same $\gapzvezdica$. While those operations can be performed in any order, we will later present an algorithm that makes this simplification unique and efficient. 

\begin{example}\label{ex:huge-gamma-chapter-4}
    Consider the multigraph with binomial weights $\Gamma$ in Figure~\ref{fig:huge-gamma}. Instead of writing edge weights, we color all edges of weight $0$ black and all edges of weight $1$ by either blue, orange, green or pink. 
    
    To obtain $\gapzvezdica(\Gamma)$ we follow reductions described in Section~\ref{sec:operations-preserve-gap*}. First, we consecutively contract five blue $1$-edges, four orange $1$-edges and three green $1$-edges. After all $12$ contractions we obtain graph $\Gamma'$, see Figure~\ref{fig:huge-gamma'}.
    Next we eliminate one orange, one green, and one pink loop of weight $1$ and the resulting graph is $\Gamma''$, see Figure~\ref{fig:huge-gamma''}. After elimination of all parallel edges of weight $0$ and all parallel loops of weight $0$, the resulting graph $\Gamma'''$ is shown in Figure~\ref{fig:huge-gamma'''}. Observe that $\Gamma'''$ is uniquely obtained from $\Gamma$ by the described process and that it does not contain $1$-edges, $1$-loops or parallel $0$-edges. Let us denote by $\Psi$ the connected component of $\Gamma'''$ on four vertices. Using Lemmas~\ref{lem:1-edge-contraction}, \ref{lem:removing-1-loops} and \ref{lem:parallel-0-edges} it follows that 
    $$
\gapzvezdica(\Gamma)=\gapzvezdica(\Gamma')=\gapzvezdica(\Gamma'')=\gapzvezdica(\Gamma''')
=\gapzvezdica(\Psi)+4\gapzvezdica(K_1)=\gapzvezdica(\Psi)+4.
$$

\begin{figure}[htb]
\centering
  \subfigure[$\Gamma$]{ \label{fig:huge-gamma}
    \begin{tikzpicture}[scale=0.95,
  vertex/.style={circle,draw,fill=white,inner sep=1.2pt,minimum size=5pt},
  cyanedge/.style={cyan,thick},
  orangeedge/.style={orange,thick},
  tealedge/.style={teal,thick},
  blackedge/.style={black,thick}
  ]
 \hugeGamma
\end{tikzpicture}}
\hfill \subfigure[$\Gamma'$]{  \label{fig:huge-gamma'}
    \begin{tikzpicture}[scale=0.95,
  vertex/.style={circle,draw,fill=white,inner sep=1.2pt,minimum size=5pt},
  cyanedge/.style={cyan,thick},
  orangeedge/.style={orange,thick},
  tealedge/.style={teal,thick},
  blackedge/.style={black,thick}
  ]

  \node[vertex] (H) at (0.5,1.2){};
  \node[vertex] (D) at (3,1.2) {};
  \node[vertex] (P) at (5,1.2) {};
  
  \node[vertex] (B0) at (0,2.0) {};
  \node[vertex] (B1) at (1,2.0) {};
  \node[vertex] (B2) at (3,2.0) {};
  \node[vertex] (B3) at (5,2.0) {};
  \tikzset{every loop/.style={min distance=4mm,in=120,out=60,looseness=10}}
\draw[color=magenta] (B3) to[loop above] (B3);
\draw[blackedge] (B2) to[loop above] (B2);
\tikzset{every loop/.style={min distance=4mm,in=130,out=50,looseness=15}}
\draw[blackedge] (B3) to[loop above] (B3);
 \tikzset{every loop/.style={min distance=4mm,in=240,out=300,looseness=10}}
\draw[tealedge] (P) to[loop below] (P);
\draw[orangeedge] (D) to[loop below] (D);
\draw[blackedge] (H) to [loop below] (H);
 \tikzset{every loop/.style={min distance=4mm,in=230,out=310,looseness=15}}
\draw[tealedge] (P) to[loop below] (P);
\draw[orangeedge] (D) to[loop below] (D);
\draw[blackedge] (H) to [loop below] (H);
\tikzset{every loop/.style={min distance=4mm,in=220,out=320,looseness=20}}
\draw[orangeedge] (D) to[loop below] (D);
\draw[blackedge] (H) to [loop below] (H);
\draw[blackedge] (P) to[loop below] (P);
\tikzset{every loop/.style={min distance=4mm,in=215,out=325,looseness=25}}
\draw[orangeedge] (D) to[loop below] (D);
\draw[blackedge] (P) to[loop below] (P);
\tikzset{every loop/.style={min distance=4mm,in=210,out=330,looseness=35}}
\draw[blackedge] (D) to[loop below] (D);
\tikzset{every loop/.style={min distance=4mm,in=205,out=335,looseness=47}}
\draw[blackedge] (D) to[loop below] (D);

  \draw[blackedge] (B0) -- (B1) -- (B2) -- (B3);

  \draw[blackedge] (H) -- (D) -- (B2);
  \draw[blackedge] (P) -- (B3);

  \draw[blackedge] (B2) edge [bend left,color=black,thick] (D);
  \draw[blackedge] (B0) edge [bend left,color=black,thick] (H);
   \draw[blackedge] (B0) edge [bend right,color=black,thick] (H);
   \draw[blackedge] (B1) edge [bend left,color=black,thick] (H);
   \draw[blackedge] (B1) edge [bend right,color=black,thick] (H);
  \draw[blackedge] (H) edge [bend left,color=black,thick] (D);
\end{tikzpicture}}

 \subfigure[$\Gamma''$]{  \label{fig:huge-gamma''}
    \begin{tikzpicture}[scale=0.95,
  vertex/.style={circle,draw,fill=white,inner sep=1.2pt,minimum size=5pt},
  cyanedge/.style={cyan,thick},
  blackedge/.style={black,thick}
  ]

  \node[vertex] (H) at (0.5,1.2){};
  
  \node[vertex] (B0) at (0,2.0) {};
  \node[vertex] (B1) at (1,2.0) {};
  \node[vertex] (B2) at (2,2.0) {};

  \node[vertex, fill=orange] (C1) at (1.5,1.6) {};
  \node[vertex, fill=orange] (C2) at (1.5,1.2) {};
  \node[vertex, fill=orange] (C3) at (1.5,0.8) {};
  \node[vertex, fill=teal] (c4) at (2,1.2) {};

 \tikzset{every loop/.style={min distance=4mm,in=120,out=60,looseness=10}}
\draw[blackedge] (B2) to[loop above] (B2);
 \tikzset{every loop/.style={min distance=4mm,in=240,out=300,looseness=10}}
\draw[blackedge] (H) to [loop below] (H);
 \tikzset{every loop/.style={min distance=4mm,in=230,out=310,looseness=15}}
\draw[blackedge] (H) to [loop below] (H);
\tikzset{every loop/.style={min distance=4mm,in=220,out=320,looseness=20}}
\draw[blackedge] (H) to [loop below] (H);

  \draw[blackedge] (B0) -- (B1) -- (B2);

  \draw[blackedge] (B0) edge [bend left,color=black,thick] (H);
   \draw[blackedge] (B0) edge [bend right,color=black,thick] (H);
   \draw[blackedge] (B1) edge [bend left,color=black,thick] (H);
   \draw[blackedge] (B1) edge [bend right,color=black,thick] (H);
   
\end{tikzpicture}}
\qquad\qquad
\subfigure[$\Gamma'''$]{  \label{fig:huge-gamma'''}
    \begin{tikzpicture}[scale=0.95,
  vertex/.style={circle,draw,fill=white,inner sep=1.2pt,minimum size=5pt},
  cyanedge/.style={cyan,thick},
  blackedge/.style={black,thick}
  ]

 \tauGammaNoLabel
 
  \node[vertex, fill=orange] (C1) at (1.5,1.6) {};
  \node[vertex, fill=orange] (C2) at (1.5,1.2) {};
  \node[vertex, fill=orange] (C3) at (1.5,0.8) {};
  \node[vertex, fill=teal] (c4) at (2,1.2) {};
 \phantom{\tikzset{every loop/.style={min distance=4mm,in=220,out=320,looseness=20}}
\draw[blackedge] (H) to [loop below] (H);}
\end{tikzpicture}}

\subfigure[$\Gamma^{(4)}$]{  \label{fig:huge-gamma-4}
    \begin{tikzpicture}[scale=0.95,
  vertex/.style={circle,draw,fill=white,inner sep=1.2pt,minimum size=5pt},
  cyanedge/.style={cyan,thick},
  blackedge/.style={black,thick}
  ]

  \node[vertex] (H) at (-1,1.2){};
  
  \node[vertex] (B0) at (0,1.2) {};
 \phantom{ \node[vertex] (B1) at (1,0) {};}
  \node[vertex] (B2) at (1,1.2) {};
  \node[vertex, fill=orange] (C1) at (1.5,1.6) {};
  \node[vertex, fill=orange] (C2) at (1.5,1.2) {};
  \node[vertex, fill=orange] (C3) at (1.5,0.8) {};
  \node[vertex, fill=teal] (C4) at (2,1.2) {};

 \tikzset{every loop/.style={min distance=4mm,in=120,out=60,looseness=10}}
\draw[blackedge] (B2) to[loop above] (B2);
 \draw[blackedge] (H) to [loop above] (H);
 
 \draw (B0) edge [bend right,color=green,thick] (H);
 
  \draw[blackedge] (H) -- (B0) --  (B2);

\end{tikzpicture}}
\qquad\qquad
\subfigure[$\Gamma^{(5)}$]{  \label{fig:huge-gamma-5}
    \begin{tikzpicture}[scale=0.95,
  vertex/.style={circle,draw,fill=white,inner sep=1.2pt,minimum size=5pt},
  cyanedge/.style={cyan,thick},
  blackedge/.style={black,thick}
  ]

  \node[vertex] (H) at (0,1.2){};
  
 \phantom{ \node[vertex] (B1) at (1,0) {};}
  \node[vertex] (B2) at (1,1.2) {};
  \node[vertex, fill=orange] (C1) at (1.5,1.6) {};
  \node[vertex, fill=orange] (C2) at (1.5,1.2) {};
  \node[vertex, fill=orange] (C3) at (1.5,0.8) {};
  \node[vertex, fill=teal] (C4) at (2,1.2) {};

 \tikzset{every loop/.style={min distance=4mm,in=120,out=60,looseness=10}}
\draw[blackedge] (B2) to[loop above] (B2);
 \draw[blackedge] (H) to [loop above] (H);
  \tikzset{every loop/.style={min distance=4mm,in=130,out=50,looseness=15}}
  \draw[blackedge] (H) to [loop above] (H);
 
  \draw[blackedge] (H) --  (B2);

\end{tikzpicture}}
\qquad\qquad
\subfigure[$\Gamma^{(6)}$]{  \label{fig:huge-gamma-6}
    \begin{tikzpicture}[scale=0.95,
  vertex/.style={circle,draw,fill=white,inner sep=1.2pt,minimum size=5pt},
  cyanedge/.style={cyan,thick},
  blackedge/.style={black,thick}
  ]

   \node[vertex] (H) at (0,1.2){};
  
 \phantom{ \node[vertex] (B1) at (1,0) {};}
  \node[vertex] (B2) at (1,1.2) {};
  \node[vertex, fill=orange] (C1) at (1.5,1.6) {};
  \node[vertex, fill=orange] (C2) at (1.5,1.2) {};
  \node[vertex, fill=orange] (C3) at (1.5,0.8) {};
  \node[vertex, fill=teal] (C4) at (2,1.2) {};
  
 \tikzset{every loop/.style={min distance=4mm,in=120,out=60,looseness=10}}
\draw[blackedge] (B2) to[loop above] (B2);
 \draw[blackedge] (H) to [loop above] (H);

  \draw[blackedge] (H) --  (B2);

\end{tikzpicture}}
    \caption{A weighted multigraph $\Gamma$ and its reduction to smaller graphs, where all the blue, orange, green and pink edges and loops have weight $1$ and all black edges and loops have weight $0$. All seven graphs have the same $\gapzvezdica$.}
    \label{fig:huge-multigraph}
\end{figure}

To compute $\gapzvezdica(\Gamma''')$ we can eliminate its vertex of degree $2$.
With this operation we obtain graph $\Gamma^{(4)}$ (Figure~\ref{fig:huge-gamma-4}), which contains an edge of weight $1$. Now it is possible to continue the same reduction as described above. After $1$-edge contraction in  $\Gamma^{(4)}$ and  elimination of a parallel $0$-loop on $\Gamma^{(5)}$ we obtain  graph $\Gamma^{(6)}=P_2^{\circ,\circ}\cup 4K_1$ and by Lemmas~\ref{lem:removing-ver-deg-2}, \ref{lem:1-edge-contraction} and \ref{lem:parallel-0-edges} we get
$$ \gapzvezdica(\Gamma)=\gapzvezdica(\Psi)+4=\gapzvezdica(P_2^{\circ,\circ})+4. $$
\end{example}
In the next subsection we will find a shorter way to reduce $\Gamma$ to $\Psi$ 
and compute $\gapzvezdica(\Gamma)$.

\subsection{Operation $\tau_1$}

    Let $\Gamma=(V(\Gamma),E(\Gamma),w_{\Gamma})$ be a connected weighted multigraph with binomial weights. For every $i\in \{0,1\}$, let us denote by $\varepsilon_i(\Gamma)$ the number of edges (including loops) in $E(\Gamma)$ of weight $i$.

    \begin{definition}\label{def:1-subgraph}
    We say that an induced subgraph $\Gamma'$ of $\Gamma$ is a \emph{$1$-component} of $\Gamma$ if it contains a spanning tree consisting only of edges of weight $1$, and any edges incident to $u\in V(\Gamma')$ and $v\in V(\Gamma)\setminus V(\Gamma')$ have weight $0$.

    In particular, it can happen that a $1$-component of $\Gamma$ has only one vertex $u$ (with possible $0$-loops or $1$-loops): $u$ is the only vertex in a $1$-component if and only if $u$ is a vertex of degree $0$ or is incident (in $\Gamma$) only to loops and edges $e=\{u,v\}$, $u\ne v$, with $w_{\Gamma}(e)=0$.
    \end{definition}
    
    Observe that every vertex $v \in V(\Gamma)$ is a vertex in precisely one $1$-component of $\Gamma$ and that for a $1$-component $\Gamma'$ of $\Gamma$ we have $\varepsilon_1(\Gamma') \geq |V(\Gamma')|-1$. 
    
\begin{definition}\label{def:tau1}
     Let  $\Gamma=(V(\Gamma),E(\Gamma),w_{\Gamma})$ be a connected weighted multigraph with binomial weights and $1$-components
     $\Gamma_1,\ldots,\Gamma_k$. 
     Assume the components are ordered so that for some $\ell \in \{0,1,\ldots,k\}$:
     \begin{itemize}
   \item $\varepsilon_1(\Gamma_i)=|V(\Gamma_j)|-1$  for $i\in [\ell]$, and \item $\varepsilon_1(\Gamma_i) \geq |V(\Gamma_i)|$ for $j\in \{\ell+1,\ldots,k\}$.
     \end{itemize}

     We denote  $$t_1(\Gamma):=\sum_{i=\ell+1}^{k}(\varepsilon_1(\Gamma_i) - |V(\Gamma_i)|)$$  and define a weighted graph $\tau_1(\Gamma)$ by
     \begin{itemize}
         \item $V(\tau_1(\Gamma))=\{w_1,\ldots,w_{\ell}\}$;
         \item for $i,j\in [\ell]$ there is a unique edge $\{w_i,w_j\}\in E(\tau_1(\Gamma))$ if and only if there exists $e=\{u,v\}\in E(\Gamma)$ of weight $0$ with $u\in \Gamma_i$ and $v\in \Gamma_j$, where  we allow $i=j$;
         \item all edges and loops in $\tau_1(\Gamma)$ are of weight $0$.
     \end{itemize}
\end{definition}

If a multigraph $\Gamma$ does not contain any $1$-edges, parallel $0$-edges or loops then $\tau_1(\Gamma) = \Gamma$ and $t_1(\Gamma) = 0$.

\begin{example}\label{ex:tau1-reduction-huge-Gamma}
     For multigraph $\Gamma$ in Figure~\ref{fig:huge-gamma} we have $k=7$ and $\ell=4$. More precisely, let  $\Gamma_1$ be the $1$-component of $\Gamma$ on six vertices that has a blue spanning tree ($V(\Gamma_1)=6$ and $\varepsilon_1(\Gamma_1)=5$), $\Gamma_2$ and $\Gamma_3$ single vertices without loops  ($V(\Gamma_2)=V(\Gamma_3)=1$ and $\varepsilon_1(\Gamma_2)=\varepsilon_1(\Gamma_3)=0$), $\Gamma_4$ is a vertex with a black loop  ($V(\Gamma_4)=1$ and $\varepsilon_1(\Gamma_4)=0$). These are four induced subgraphs, such that $\varepsilon_1(\Gamma_i)=|V(\Gamma_i)|-1$  for $i\in [4]$. Moreover, let $\Gamma_5$ be the induced subgraph of $\Gamma$  on five vertices with an orange spanning tree ($V(\Gamma_5)=5$ and $\varepsilon_1(\Gamma_5)=8$), $\Gamma_6$ on four vertices having green  spanning tree ($V(\Gamma_6)=4$ and $\varepsilon_1(\Gamma_6)=5$), $\Gamma_7$ a vertex with two loops ($V(\Gamma_7)=1$  and $\varepsilon_1(\Gamma_7)=1$), as shown in Figure~\ref{fig:spanning-of-huge-gamma}. 
     
   \begin{figure}[htb]
\centering
  \subfigure[$\Gamma$]{ \label{fig:huge-gamma-repeated}
    \begin{tikzpicture}[
  vertex/.style={circle,draw,fill=white,inner sep=1.2pt,minimum size=5pt},
  cyanedge/.style={cyan,thick},
  orangeedge/.style={orange,thick},
  tealedge/.style={teal,thick},
  blackedge/.style={black,thick}
  ]
 
  \node[vertex] (L_t)  at (0,1)  {};
  \node[vertex] (L_m)  at (0,0.3)  {};
  \node[vertex] (L_b)  at (0,-0.4)    {};
  \node[vertex] (R_tL) at (1,1){};
  \node[vertex] (R_mL) at (1,0.3){};
  \node[vertex] (R_bL) at (1,-0.4)  {};

  \draw[cyanedge] (L_t) -- (L_m) -- (L_b);
  \draw[cyanedge] (R_tL) -- (R_mL) -- (R_bL);
  \draw[cyanedge] (L_m) -- (R_mL);
  \draw[blackedge] (L_b) -- (R_mL);
  \draw[blackedge] (L_m) -- (R_bL);

  \node[vertex] (D_t) at (3,1.0) {};
  \node[vertex] (D_m) at (3,0.3) {};
  \node[vertex] (D_b) at (3,-0.4){};
  \node[vertex] (D_rt) at (4,0.6){};
  \node[vertex] (D_rb) at (4,0){};

  \draw[orangeedge] (D_t) -- (D_rt) -- (D_rb) -- (D_b) -- (D_m) --(D_t) ;
\path (D_m)  edge [bend left,color=orange,thick] (D_t) edge [bend right,color=black,thick]  (D_b);
\path (D_rt)  edge [bend left,color=black,thick] (D_rb);

  \node[vertex] (P_t) at (5,1) {};
  \node[vertex] (P_m) at (5,0.3) {};
  \node[vertex] (P_b) at (5,-0.4){};
  \node[vertex] (P_r) at (5.5,0.7){};  
  
  \draw[tealedge] (P_m) -- (P_r) -- (P_t) -- (P_m) -- (P_b);

  \node[vertex] (B0) at (0,2.0) {};
  \node[vertex] (B1) at (1,2.0) {};
  \node[vertex] (B2) at (3,2.0) {};
  \node[vertex] (B3) at (5,2.0) {};
  \phantom{\node[rectangle,draw=none] at (3.5,-1) {$\Gamma_5$};
\node[rectangle,draw=none] at (5.25,-1) {$\Gamma_6$};
\node[rectangle,draw=none] at (5,3) {$\Gamma_7$};
\node[rectangle,draw=none] at (0.5,-1) {$\Gamma_1$};
\node[rectangle,draw=none] at (3,3) {$\Gamma_4$};
\node[rectangle,draw=none] at (0,3) {$\Gamma_2$};
\node[rectangle,draw=none] at (1,3) {$\Gamma_3$};}
  \tikzset{every loop/.style={min distance=4mm,in=120,out=60,looseness=10}}
\draw[color=magenta] (B3) to[loop above] (B3);
\draw[blackedge] (B2) to[loop above] (B2);
\draw[tealedge] (P_t) to[loop above] (P_t);
\draw[blackedge] (P_r) to[loop above] (P_r);
\draw[orangeedge] (D_t) to[loop above] (D_t);
\tikzset{every loop/.style={min distance=4mm,in=130,out=50,looseness=15}}
\draw[blackedge] (B3) to[loop above] (B3);
\tikzset{every loop/.style={min distance=4mm,in=240,out=300,looseness=10}}
\draw[orangeedge] (D_b) to[loop below] (D_b);
\draw[blackedge] (P_b) to[loop below] (P_b);

  \draw[blackedge] (B0)--(B1) -- (B2) -- (B3);

  \draw[blackedge] (L_t) -- (B0)--(R_tL);
  \draw[blackedge] (L_t) -- (B1)--(R_tL);
  \draw[blackedge] (D_t) -- (B2);
  \draw[blackedge] (P_t) -- (B3);
  \draw[blackedge] (L_b)--(R_bL);

  \draw[blackedge] (R_bL) -- (D_b);
  \draw[blackedge] (R_mL) -- (D_m);
  \draw[blackedge] (B2) edge [bend left,color=black,thick] (D_m);
\end{tikzpicture}}
\hfill\subfigure[induced subgraphs of $\Gamma$]{ \label{fig:spanning-of-huge-gamma}
    \begin{tikzpicture}[
  vertex/.style={circle,draw,fill=white,inner sep=1.2pt,minimum size=5pt},
  cyanedge/.style={cyan,thick},
  orangeedge/.style={orange,thick},
  tealedge/.style={teal,thick},
  blackedge/.style={black,thick}
  ]
 
  \node[vertex] (L_t)  at (0,1)  {};
  \node[vertex] (L_m)  at (0,0.3)  {};
  \node[vertex] (L_b)  at (0,-0.4)    {};
  \node[vertex] (R_tL) at (1,1){};
  \node[vertex] (R_mL) at (1,0.3){};
  \node[vertex] (R_bL) at (1,-0.4)  {};

  \draw[cyanedge] (L_t) -- (L_m) -- (L_b);
  \draw[cyanedge] (R_tL) -- (R_mL) -- (R_bL);
  \draw[cyanedge] (L_m) -- (R_mL);
  \draw[blackedge] (L_b) -- (R_mL);
  \draw[blackedge] (L_m) -- (R_bL)--(L_b);

  \node[vertex] (D_t) at (3,1.0) {};
  \node[vertex] (D_m) at (3,0.3) {};
  \node[vertex] (D_b) at (3,-0.4){};
  \node[vertex] (D_rt) at (4,0.6){};
  \node[vertex] (D_rb) at (4,0){};

  \draw[orangeedge] (D_t) -- (D_rt) -- (D_rb) -- (D_b) -- (D_m) --(D_t) ;
\path (D_m)  edge [bend left,color=orange,thick] (D_t) edge [bend right,color=black,thick]  (D_b);
\path (D_rt)  edge [bend left,color=black,thick] (D_rb);

  \node[vertex] (P_t) at (5,1) {};
  \node[vertex] (P_m) at (5,0.3) {};
  \node[vertex] (P_b) at (5,-0.4){};
  \node[vertex] (P_r) at (5.5,0.7){};  
  
  \draw[tealedge] (P_m) -- (P_r) -- (P_t) -- (P_m) -- (P_b);
  
  \node[vertex] (B0) at (0,2.0) {};
  \node[vertex] (B1) at (1,2.0) {};
  \node[vertex] (B2) at (3,2.0) {};
  \node[vertex] (B3) at (5,2.0) {};
  \tikzset{every loop/.style={min distance=4mm,in=120,out=60,looseness=10}}
\draw[color=magenta] (B3) to[loop above] (B3);
\draw[blackedge] (B2) to[loop above] (B2);
\draw[tealedge] (P_t) to[loop above] (P_t);
\draw[blackedge] (P_r) to[loop above] (P_r);
\draw[orangeedge] (D_t) to[loop above] (D_t);
\tikzset{every loop/.style={min distance=4mm,in=130,out=50,looseness=15}}
\draw[blackedge] (B3) to[loop above] (B3);
\tikzset{every loop/.style={min distance=4mm,in=240,out=300,looseness=10}}
\draw[blackedge] (P_b) to[loop below] (P_b);
\draw[orangeedge] (D_b) to[loop below] (D_b);

\node[rectangle,draw=none] at (3.5,-1) {$\Gamma_5$};
\node[rectangle,draw=none] at (5.25,-1) {$\Gamma_6$};
\node[rectangle,draw=none] at (5,3) {$\Gamma_7$};
\node[rectangle,draw=none] at (0.5,-1) {$\Gamma_1$};
\node[rectangle,draw=none] at (3,3) {$\Gamma_4$};
\node[rectangle,draw=none] at (0,3) {$\Gamma_2$};
\node[rectangle,draw=none] at (1,3) {$\Gamma_3$};
\end{tikzpicture}}

\subfigure[$\tau_1(\Gamma)$]{  \label{fig:tau-1-Gamma}
    \begin{tikzpicture}[
  vertex/.style={circle,draw,fill=white,inner sep=1.2pt,minimum size=5pt},
  cyanedge/.style={cyan,thick},
  blackedge/.style={black,thick}
  ]

   \node[vertex] (H) at (0.5,1.2){};
  
  \node[vertex] (B0) at (0,2.0) {};
  \node[vertex] (B1) at (1,2.0) {};
  \node[vertex] (B2) at (2,2.0) {};
  \phantom{\node[vertex, fill=orange] (C1) at (1,1) {};}
 \tikzset{every loop/.style={min distance=4mm,in=240,out=300,looseness=10}}
\draw[blackedge] (B2) to[loop below] (B2);
\draw[blackedge] (H) to [loop below] (H);
  \draw[blackedge] (B1)--(H) -- (B0) -- (B1) -- (B2);

  \node[rectangle,draw=none] at (0.5,0.6) {$w_1$};
  \node[rectangle,draw=none] at (0,2.5) {$w_2$};
  \node[rectangle,draw=none] at (1,2.5) {$w_3$};
  \node[rectangle,draw=none] at (2,2.5) {$w_4$};
  
\end{tikzpicture}}
    \caption{A weighted multigraph $\Gamma$, where all the blue, orange, green and pink edges and loops have weight $1$ and all black edges and loops have weight $0$, its $1$-components and $\tau_1(\Gamma)$.}
    \label{fig:huge-multigraph-tau-reduction}
\end{figure}

Using Definition~\ref{def:tau1} we obtain $V(\tau_1(\Gamma))=\{w_1,w_2,w_3,w_4\}$ and $$E(\tau_1(\Gamma))=\{\{w_1,w_1\},\{w_1,w_2\},\{w_1,w_3\},\{w_2,w_3\},\{w_3,w_4\},\{w_4,w_4\}\},$$
all with weights $0$. This results in graph $\tau_1(\Gamma)$ in Figure~\ref{fig:tau-1-Gamma}.

Note that $\tau_1(\Gamma)$ is equal to connected component $\Psi$ of $\Gamma'''$ in Figure~\ref{fig:huge-gamma'''}. The other four vertices of degree $0$ in $\Gamma'''$ do not appear in $\tau_1(\Gamma)$, but their number is equal to $$t_1(\Gamma)=(8-5)+(5-4)+(1-1)=4.$$
\end{example}

\begin{proposition}\label{prop:tau1}
 If  $\Gamma=(V(\Gamma),E(\Gamma),w_{\Gamma})$ is a weighted multigraph with binomial weights, then     $$\gapzvezdica(\Gamma)=\gapzvezdica(\tau_1(\Gamma))+t_1(\Gamma).$$
\end{proposition}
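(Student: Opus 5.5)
We reduce $\Gamma$ to $\tau_1(\Gamma)$ in three stages, using only operations already shown to preserve $\gapzvezdica$. These are the $1$-edge contractions of Lemma~\ref{lem:1-edge-contraction}, the elimination of $1$-loops in Lemma~\ref{lem:removing-1-loops}, and the deletion of parallel $0$-edges in Lemma~\ref{lem:parallel-0-edges}. This is exactly the procedure carried out by hand in Example~\ref{ex:huge-gamma-chapter-4}. Since $\zeta$ of a disjoint union is the disjoint union of the $\zeta$'s, and $\gap$ is additive over connected components (optimal set-join covers split over components), we may treat $\Gamma$ componentwise. The statement for disconnected $\Gamma$ follows by summing, once the definition of $\tau_1$ is read componentwise.

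\textbf{Stage 1 (contraction).} For each $1$-component $\Gamma_i$, fix a spanning tree of $1$-edges and contract its $|V(\Gamma_i)|-1$ edges one at a time; each contraction preserves $\gapzvezdica$ by Lemma~\ref{lem:1-edge-contraction}. Contraction preserves all other incidences and weights. Hence $\Gamma_i$ collapses to a single vertex $w_i$ carrying exactly $\varepsilon_1(\Gamma_i)-(|V(\Gamma_i)|-1)$ loops of weight $1$, together with the $0$-edges inside $\Gamma_i$, which become $0$-loops at $w_i$. Every $0$-edge of $\Gamma$ joining $\Gamma_i$ and $\Gamma_j$ becomes a $0$-edge $\{w_i,w_j\}$. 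No edge of weight $1$ joins different $1$-components, by Definition~\ref{def:1-subgraph}. Call the resulting multigraph $\Gamma'$. In $\Gamma'$ the vertices $w_1,\dots,w_\ell$ have no $1$-loops, and for $i>\ell$ the vertex $w_i$ has $m_i:=\varepsilon_1(\Gamma_i)-|V(\Gamma_i)|+1\ge 1$ loops of weight $1$.

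\textbf{Stage 2 ($1$-loops).} For each $i>\ell$, apply Lemma~\ref{lem:removing-1-loops} once at $w_i$: $\gapzvezdica(\Gamma')=\gapzvezdica((\Gamma'\setminus\{e\})_{-w_i})$, where $e$ is one of its $1$-loops. By the definition of $\Gamma_{-v}$, removing $w_i$ does the following. It deletes all its $0$-edges and $0$-loops, since these contribute nothing. It turns each of the remaining $m_i-1$ $1$-loops into an isolated vertex $v'_j$. It creates no pendent $u'_j$, because all non-loop edges at $w_i$ have weight $0$. It removes no leaves, since it has no $1$-edges to leaves.

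The key point is that every non-loop edge at $w_i$ is a $0$-edge. Removing $w_i$ therefore simply deletes it (Remark~\ref{rem:gamma-v}) and does not affect the vertices $w_j$, $j\le \ell$, beyond deleting edges. Consequently, after doing this for all $i=\ell+1,\dots,k$ in turn (the order does not matter, since all edges between the $w_i$ are $0$-edges), we obtain the disjoint union of $\sum_{i>\ell}(m_i-1)=t_1(\Gamma)$ isolated vertices with the multigraph induced by $\Gamma'$ on $\{w_1,\dots,w_\ell\}$. Each isolated vertex contributes $\gapzvezdica(K_1)=\gap(K_1)=1$. This additivity over components is the one point needing a line of justification: $\stp$ of a disjoint union is the sum, by restricting a cover to each component.

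\textbf{Stage 3 (parallel $0$-edges).} The remaining multigraph on $\{w_1,\dots,w_\ell\}$ has only $0$-edges and $0$-loops. Reducing each parallel class to one edge by Lemma~\ref{lem:parallel-0-edges} yields exactly $\tau_1(\Gamma)$ as in Definition~\ref{def:tau1}: an edge $\{w_i,w_j\}$ (or loop, when $i=j$) exists iff some $0$-edge of $\Gamma$ joins $\Gamma_i$ and $\Gamma_j$.

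The main obstacle I expect is the bookkeeping in Stage 2. Specifically, one must verify that successive applications of $(\cdot)_{-w_i}$ commute and interact as claimed. This requires confirming that after Stage 1 no $w_i$ with $i>\ell$ is a leaf joined to another vertex by a $1$-edge, and that no new $1$-edges are created. Both hold because all inter-component edges have weight $0$.
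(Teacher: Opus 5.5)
Your proposal is correct and follows the paper's proof essentially step for step. You contract a spanning tree of $1$-edges in each $1$-component (Lemma~\ref{lem:1-edge-contraction}), then eliminate one $1$-loop at each $w_i$ with $i>\ell$ via Lemma~\ref{lem:removing-1-loops} to split off $t_1(\Gamma)$ copies of $K_1$, and finally delete parallel $0$-edges (Lemma~\ref{lem:parallel-0-edges}) to reach $\tau_1(\Gamma)$. Your extra checks on how $(\cdot)_{-w_i}$ acts (only $0$-edges at $w_i$, so no new pendent vertices and no deleted leaves, and the order of removals is irrelevant) make explicit what the paper leaves implicit.
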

\begin{proof}
 Let $\Gamma_1,\ldots,\Gamma_k$ be all the $1$-components of $\Gamma$ and let $\ell\in\{0,1,\ldots,k\}$ be as in Definition~\ref{def:tau1}.
 Let $\Gamma'$ be a graph obtained from $\Gamma$ using $1$-edge contraction consecutively on $\sum_{i\in[k]} (|V(\Gamma_i)|-1)$ edges of weight $1$. This results in a graph $\Gamma'$ with $V(\Gamma')=\{w_1,\ldots,w_k\}$, where the vertex $w_i$ is the $1$-edge contraction of $\Gamma_i$, $i\in [k]$. 
 Note that each $w_i$ has $\varepsilon_1(\Gamma_i)-|V(\Gamma_i)|+1$ loops of weight $1$ and $\varepsilon_0(\Gamma_i)$ loops of weight $0$. Moreover, for each $e\in E(\Gamma)$ which is incident to $u\in V(\Gamma_i)$ and $v\in V(\Gamma_{j})$ for some $i,j \in [k]$, 
 there is $e'\in \Gamma'$ with $w_{\Gamma'}(e')=0$  and is incident to $w_i$ and $w_j$. By Lemma~\ref{lem:1-edge-contraction} we have $\gap(\Gamma)=\gap(\Gamma')$.
    
 Note that for $i\in [\ell]$ vertices $w_i$ have no loops of weight $1$ and all vertices $w_i$, $\ell < i\leq k$,  
 have at least one loop of weight $1$. Let $\Gamma''$ be a graph we obtain from  $\Gamma'$ by eliminating one loop of $w_i$ with weight $1$, for all  $i=\ell+1,\ldots,k$, together with $w_i$ and all edges (and loops) of weight $0$ incident to $w_i$. 
 Graph $\Gamma''$ contains $t_1(\Gamma)$ copies of $K_1$ arising from remaining loops of weight $1$ in $\Gamma'$, and let us denote by $\Psi$ the union of other components of $\Gamma''$. 
 Using Lemma~\ref{lem:removing-1-loops} it follows that $\gapzvezdica(\Gamma')=\gapzvezdica(\Gamma'')=\gapzvezdica(\Psi)+t_1(\Gamma)$. 
 Note that $V(\Psi)=\{w_1,\ldots,w_{\ell}\}$. Lastly, we eliminate all remaining parallel $0$-edges of $\Psi$ and we obtain the graph $\tau_1(\Gamma)$ and using Lemma~\ref{lem:parallel-0-edges} the proposition is proved.
\end{proof}

 \subsection{Operation $\tau_2$} Recall that in a multigraph $\Gamma$ we denote the set of all leaves by $\leaves(\Gamma)$.

 \begin{definition}
 For every weighted multigraph $\Gamma=(V(\Gamma),E(\Gamma),w_{\Gamma})$ with binomial weights let 
  $$N_\Gamma[\leaves(\Gamma)] = \bigcup_{v \in \leaves(\Gamma)} N_\Gamma[v]$$
 the set of all neighbours of leaves in $\Gamma$. We define
 $$\tau_2(\Gamma) := \Gamma\left(\leaves(\Gamma)\cup N_\Gamma[\leaves(\Gamma)]\right)$$
 to be the weighted multigraph obtained from $\Gamma$ by eliminating all leaves and their neighbours, and
 $$t_2(\Gamma) := |\leaves(\Gamma)| - |N_\Gamma[\leaves(\Gamma)]|.$$
 \end{definition}

To clarify the definition above, we offer some comments: 
\begin{itemize}
\item $t_2(\Gamma) \ge 0$ since every leaf has a unique neighbour.
\item If $\Gamma$ has no leaves then $\tau_2(\Gamma) = \Gamma$ and $t_2(\Gamma) =0$. 
\item The multigraph $\tau_2(\Gamma)$ may contain (new) leaves. 
\item The operation $\tau_2$ eliminates any connected components of $\Gamma$ that are isomorphic to $P_2$ with at most one looped vertex.
\end{itemize}

 \begin{proposition}\label{prop:tau2}
 If a weighted multigraph $\Gamma=(V(\Gamma),E(\Gamma),w_{\Gamma})$ with binomial weights contains no edges of weight $1$, then 
 $$\gapzvezdica(\Gamma) = \gapzvezdica(\tau_2(\Gamma)) + t_2(\Gamma).$$
 \end{proposition}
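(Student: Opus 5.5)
We handle the leaves one neighbour at a time, applying the elimination of leaves (Lemma~\ref{lem:removing-leaves-on-chdv}) repeatedly. Write $N_\Gamma[\leaves(\Gamma)]=\{u_1,\ldots,u_r\}$, and for each $j\in[r]$ let $L_j=\{v\in\leaves(\Gamma)\colon N_\Gamma[v]=\{u_j\}\}$. Every leaf has a unique neighbour, so the sets $L_j$ partition $\leaves(\Gamma)$. Hence $t_2(\Gamma)=\sum_{j}(|L_j|-1)$, and it suffices to show that removing $L_j\cup\{u_j\}$ changes $\gapzvezdica$ by exactly $|L_j|-1$. Since $\Gamma$ has no $1$-edges, every edge involved has weight $0$. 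Throughout, a leaf means a vertex of degree one in the multigraph sense, so a vertex carrying only a loop is not a leaf.

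Two auxiliary facts are needed first. The first is that $\gapzvezdica$, equivalently $\gap$, is additive over disjoint unions. One inequality is Proposition~\ref{prop:G1G2} applied to the two parts, which have no edges between them. The other follows by taking the union of optimal set-join covers of the parts. The second fact is that $\gapzvezdica(K_1)=\gap(K_1)=1$.

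\textbf{Generic case.} Assume first that $u_j$ is not itself a leaf. Pick $v\in L_j$. Lemma~\ref{lem:removing-leaves-on-chdv}(2) gives $\gapzvezdica(\Gamma)=\gapzvezdica(\Gamma(v)_{-u_j})$. Because all edges at $u_j$ have weight $0$, Remark~\ref{rem:gamma-v} gives $\Gamma(v)_{-u_j}=\Gamma(v,u_j)$. In this graph the remaining leaves in $L_j\setminus\{v\}$ become isolated vertices, so
\[
\gapzvezdica(\Gamma)=\gapzvezdica(\Gamma(L_j\cup\{u_j\}))+|L_j|-1 .
\]
Removing $u_j$ deletes only edges at $u_j$. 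The leaves of the other groups $L_i$, $i\ne j$, therefore keep their unique neighbours $u_i\ne u_j$ and remain leaves. The new graph again has no $1$-edges, so we iterate over $j=1,\ldots,r$. The result is $\tau_2(\Gamma)$, with the total shift $t_2(\Gamma)$.

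\textbf{Exceptional case.} If $u_j$ is itself a leaf, then $u_j$ and its leaf form a connected component isomorphic to $P_2$ with a single $0$-edge. This component contributes $2$ leaves and $2$ neighbours, so it contributes $0$ to $t_2(\Gamma)$. Its image under $\zeta$ is $P_4$, which has $\gap(P_4)=0$ by Proposition~\ref{prop:from KBS24}(3). By additivity, deleting this component leaves $\gapzvezdica$ unchanged, which is consistent with the formula.

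The main obstacle is bookkeeping rather than a new idea. We must check that the sequential removals never turn an original leaf into an isolated vertex before its own group is processed. This holds because a leaf is adjacent only to its own $u_j$. We must also check that newly created leaves are not removed, which is right since $\tau_2$ uses only the original $\leaves(\Gamma)$. Finally, the additivity of $\gap$ over disjoint unions must be stated explicitly, since it is used to split off isolated vertices and $P_2$ components.
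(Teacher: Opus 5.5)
Your proof is correct and follows the paper's route. You set aside the $P_2$ components, group the leaves by their common neighbour $u$, and use Lemma~\ref{lem:removing-leaves-on-chdv}(2) with Remark~\ref{rem:gamma-v} to pass to $\Gamma(\{u,v\})$. In that graph the other leaves at $u$ become isolated vertices contributing $1$ each, and you iterate over $N_\Gamma[\leaves(\Gamma)]$. Your explicit additivity over disjoint unions and your bookkeeping (original leaves survive until their own group is processed, new leaves are never removed) only make precise what the paper leaves implicit.
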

     
\begin{proof}
Note, eliminating connected components of $\Gamma$ that are isomorphic to $P_2$ with at most one looped vertex does not change $\gapzvezdica$. Hence, we can assume that no such components exist. In particular, $\leaves(\Gamma)\cap N_\Gamma[\leaves(\Gamma)]  = \emptyset$.

Choose $u \in N_\Gamma[\leaves(\Gamma)]$ and let $v\in \leaves(\Gamma)$ be one of its neighbouring leaves. Since  $\Gamma$ contains no edges of weight $1$ we have $\Gamma(v)_{-u} = \Gamma(\{u,v\})$. Moreover, $\Gamma(\{u,v\})$ is equal to $\Gamma\left((N_\Gamma[u]\cap\leaves(\Gamma))\cup\{u\}\right)$ together with $t_2(u):=|\leaves(\Gamma)\cap N_\Gamma[u]|-1$ vertices of degree $0$, arising from leaves that are neighbours of $u$. 
 By Lemma~\ref{lem:removing-leaves-on-chdv}
 $$\gapzvezdica(\Gamma)=\gapzvezdica(\Gamma(v)_{-u})=\gapzvezdica(\Gamma\left((N_\Gamma[u]\cap\leaves(\Gamma))\cup\{u\}\right))+t_2(u).$$
The result follows by repeating this process for all $u \in N_\Gamma[\leaves(\Gamma)]$. 
\end{proof}

\begin{example}
Consider the graph $\Gamma'$ in Figure~\ref{fig:Gamma-twin-leaves}. 
      \begin{figure}[htb]
\centering
\subfigure[$\Gamma'$]{  \label{fig:Gamma-twin-leaves}
    \begin{tikzpicture}[
  vertex/.style={circle,draw,fill=white,inner sep=1.2pt,minimum size=5pt},
  blackedge/.style={black,thick}
  ]

   \node[vertex] (H+) at (0.25,1){};
   \node[vertex] (H-) at (0.25,-1){};

  \node[vertex] (BLL) at (-1,1) {};
  \node[vertex] (BL) at (-0.5,0) {};
  \node[vertex] (BR) at (1,0) {};
  \node[vertex,fill=magenta] (N) at (2,0) {};
  
  \node[vertex] (L+) at (3,1) {};
  \node[vertex,fill=cyan] (L0) at (3,0) {};
  \node[vertex,fill=cyan] (L-) at (3,-1) {};

  \node[vertex,fill=cyan] (AL) at (-3.5,0) {};
  \node[vertex,fill=magenta] (AN) at (-2.5,0) {};
  \node[vertex] (E) at (-1.5,0) {};
  \node[vertex] (LL) at (1.5,1) {};
  
 \tikzset{every loop/.style={min distance=4mm,in=120,out=60,looseness=10}}
\draw[blackedge] (H+) to [loop above] (H+);
\draw[blackedge] (L+) to [loop above] (L+);
\draw[blackedge] (LL) to [loop above] (LL);
\draw[blackedge] (BLL) to [loop above] (BLL);

 \tikzset{every loop/.style={min distance=4mm,in=240,out=300,looseness=10}}
\draw[blackedge] (N) to[loop below] (N);
\draw[blackedge] (H-) to [loop below] (H-);
\draw[blackedge] (AN) to [loop below] (AN);

  \draw[blackedge] (AL)--(AN)--(E)--(BL)--(H+) -- (BR) --(H-) -- (BL)-- (BR)-- (N) -- (L0);
  \draw[blackedge] (L+)--(N)--(L-);
  \draw[blackedge] (LL)--(BR);
  \draw[blackedge] (BLL)--(BL);

\end{tikzpicture}}

\subfigure[$\tau_2(\Gamma')$]{  \label{fig:tau2-Gamma-twin-leaves}
    \begin{tikzpicture}[
  vertex/.style={circle,draw,fill=white,inner sep=1.2pt,minimum size=5pt},
  blackedge/.style={black,thick}
  ]

   \node[vertex] (H+) at (0.25,1){};
   \node[vertex] (H-) at (0.25,-1){};

  \node[vertex] (BLL) at (-1,1) {};
  \node[vertex,fill=magenta!60] (BL) at (-0.5,0) {};
  \node[vertex] (BR) at (1,0) {};

  \node[vertex,fill=cyan!60] (E) at (-1.5,0) {};
  \node[vertex] (LL) at (1.5,1) {};

  \node[vertex] (sing) at (2.5,1) {};
  
 \tikzset{every loop/.style={min distance=4mm,in=120,out=60,looseness=10}}
\draw[blackedge] (H+) to [loop above] (H+);
\draw[blackedge] (LL) to [loop above] (LL);
\draw[blackedge] (BLL) to [loop above] (BLL);
\draw[blackedge] (sing) to [loop above] (sing);

 \tikzset{every loop/.style={min distance=4mm,in=240,out=300,looseness=10}}
\draw[blackedge] (H-) to [loop below] (H-);

  \draw[blackedge] (E)--(BL)--(H+) -- (BR) --(H-) -- (BL)-- (BR);
  \draw[blackedge] (LL)--(BR);
  \draw[blackedge] (BLL)--(BL);
  
\end{tikzpicture}}
\hspace{2cm}
\subfigure[$K_{1,3}^{\circ,\circ,\circ}\cup 2K_1^{\circ}$]{  \label{fig:tau2^2-Gamma-twin-leaves-reduced}
    \begin{tikzpicture}[
  vertex/.style={circle,draw,fill=white,inner sep=1.2pt,minimum size=5pt},
  blackedge/.style={black,thick}
  ]

   \node[vertex] (H+) at (0.25,1){};
   \node[vertex] (H-) at (0.25,-1){};
  
  \node[vertex] (BR) at (1,0) {};
  
  \node[vertex] (LL) at (1.5,1) {};
  
  \node[vertex] (sing2) at (2.5,1) {};
  \node[vertex] (sing1) at (-1,1) {};

 \tikzset{every loop/.style={min distance=4mm,in=120,out=60,looseness=10}}
\draw[blackedge] (H+) to [loop above] (H+);
\draw[blackedge] (LL) to [loop above] (LL);
\draw[blackedge] (sing1) to [loop above] (sing1);
\draw[blackedge] (sing2) to [loop above] (sing2);

 \tikzset{every loop/.style={min distance=4mm,in=240,out=300,looseness=10}}
\draw[blackedge] (H-) to [loop below] (H-);

  \draw[blackedge] (H+) -- (BR) --(H-);
  \draw[blackedge] (LL)--(BR);
  
\end{tikzpicture}}
    \caption{A multigraph $\Gamma'$ and its reductions to smaller graphs using $\tau_2$. In all three graphs all the edge weights are assumed to be equal to $0$, the vertices from $\leaves(\Gamma')$ and $\leaves(\tau_2(\Gamma'))$ are colored blue,  and the vertices from $N_\Gamma[V_{d=1}(\Gamma')]$ and $N_{\tau_2(\Gamma')}[V_{d=1}(\tau_2(\Gamma'))]$ are colored pink.}
    \label{fig:removal-twin-leaves}
\end{figure}

Observe that $\tau_2(\Gamma')$ has a vertex of degree $1$ and $\tau_2(\tau_2(\Gamma'))=K_{1,3}^{\circ,\circ,\circ}\cup 2K_1^{\circ}$ is shown in Figure~\ref{fig:tau2^2-Gamma-twin-leaves}. Since $t_2(\Gamma)=3-2=1$  and $t_2(\tau_2(\Gamma'))=0$, Proposition~\ref{prop:tau2} implies that 
$$\gapzvezdica(\Gamma')=\gapzvezdica(\tau_2(\Gamma'))+1=\gapzvezdica(K_{1,3}^{\circ,\circ,\circ}\cup 2K_1^{\circ})+1=\gapzvezdica(K_{1,3}^{\circ,\circ,\circ})+1.$$
\end{example}
 
 \subsection{Operation $\tau_3$}

The multigraph $\Gamma = \kappa(G)$ contains no vertices of degree $2$, but after operations $\tau_1$ and $\tau_2$ the multigraph $\tau_2(\tau_1(\Gamma))$ may contain such vertices. 
The multigraph obtained by eliminating all vertices of degree $2$ in $\Gamma$ and replacing the corresponding induced paths with edges of weight $0$ or $1$, using Lemma~\ref{lem:removing-ver-deg-2} consequently, is the multigraph $\kappa(\zeta(\Gamma))$. 
Furthermore, we count vertices of degree $0$ which we obtain from parallel $1$-loops and twin leaves using functions $t_1$ and $t_2$, but the multigraph $\tau_2(\tau_1(\Gamma))$ may still contain vertices of degree $0$. 
We eliminate them now. Note that operation $\kappa(\zeta(\Gamma))$ also eliminates vertices with a single 0-loop and no other incident edges. 

Recall that in a multigraph $\Gamma$ we denote the set of all vertices of degree $0$ by $\isolated(\Gamma)$. 
We define
$$\tau_3(\Gamma) := \kappa(\zeta(\Gamma(\isolated(\Gamma)))) \; \text{ 
and } \;
t_3(\Gamma) := |\isolated(\Gamma)|.$$
If $\Gamma$ does not contain any vertices of degree $0$ or $2$ then $\tau_3(\Gamma) = \Gamma$ and $t_3(\Gamma) = 0$.
By equation~\eqref{eq:gap-gapzvezdica} we have 
\begin{equation}\label{eq:tau3}
    \gapzvezdica(\Gamma) = \gapzvezdica(\tau_3(\Gamma)) + t_3(\Gamma).
\end{equation}
Note that the multigraph $\tau_3(\Gamma)$ may contain edges of weight $1$, even if $\Gamma$ does not. 

As an example, consider a graph 
$\tau_1(\Gamma)$ from Figure~\ref{fig:tau-1-Gamma}. The graph $\tau_3(\tau_1(\Gamma))$, shown in Figure~\ref{fig:tau-31-Gamma} contains an edge of weight $1$. On the other hand, observe that for
$\Gamma''=K_{1,3}^{\circ,\circ,\circ}\cup 2K_1^{\circ}$ in Figure~\ref{fig:tau2^2-Gamma-twin-leaves} we have $\Gamma(\isolated(\Gamma''))=\Gamma''$, therefore $\tau_3(\Gamma)=\kappa(\zeta(\Gamma''))=K_{1,3}^{\circ,\circ,\circ}$ and $t_3(\Gamma'')=0$.

   \begin{figure}[htb]
\centering
  
\subfigure[$\tau_3(\tau_1(\Gamma))$]{  \label{fig:tau-31-Gamma}
\begin{tikzpicture}[
  vertex/.style={circle,draw,fill=white,inner sep=1.2pt,minimum size=5pt},
  cyanedge/.style={cyan,thick},
  blackedge/.style={black,thick}
  ]

  \node[vertex] (B0) at (0,2.0) {};
  \node[vertex] (B1) at (1,2.0) {};
  \node[vertex] (B2) at (2,2.0) {};
  \phantom{\node[vertex]  at (2,0) {};}
  
 \tikzset{every loop/.style={min distance=4mm,in=240,out=300,looseness=10}}
\draw[blackedge] (B2) to [loop below] (B2);
\draw[blackedge] (B0) to [loop below] (B0);
  \draw[blackedge] (B1) -- (B2);
  \draw[blackedge] (B0)edge [bend right,color=black,thick] (B1);
  \draw[blackedge] (B0) edge [bend left,color=black,thick] (B1);
  \node[rectangle,draw=none] at (0.5,1.5) {$1$};
  \node[rectangle,draw=none] at (0.5,2.5) {$0$};
  \node[rectangle,draw=none] at (1.5,2.2) {$0$};
  \node[rectangle,draw=none] at (0,1.3) {$0$};
  \node[rectangle,draw=none] at (2,1.3) {$0$};
\end{tikzpicture}}
\hspace{2cm}
\subfigure[$\tau_3(K_{1,3}^{\circ,\circ,\circ}\cup 2K_1^{\circ})$]{  \label{fig:tau2^2-Gamma-twin-leaves}
    \begin{tikzpicture}[
  vertex/.style={circle,draw,fill=white,inner sep=1.2pt,minimum size=5pt},
  blackedge/.style={black,thick}
  ]

   \node[vertex] (H+) at (0.25,1){};
   \node[vertex] (H-) at (0.25,-1){};
  \node[vertex] (BR) at (1,0) {};
  \node[vertex] (LL) at (1.5,1) {};
    \phantom{\node[vertex]  at (-1,1) {};}
    \phantom{\node[vertex]  at (2,1) {};}
  
 \tikzset{every loop/.style={min distance=4mm,in=120,out=60,looseness=10}}
\draw[blackedge] (H+) to [loop above] (H+);
\draw[blackedge] (LL) to [loop above] (LL);

\node[rectangle,draw=none] at (1.5,1.6) {$0$};
\node[rectangle,draw=none] at (0.25,1.6) {$0$};
\node[rectangle,draw=none] at (0.25,-1.6) {$0$};
\node[rectangle,draw=none] at (0.65,-0.8) {$0$};
\node[rectangle,draw=none] at (0.65,0.8) {$0$};
\node[rectangle,draw=none] at (1.2,0.8) {$0$};

 \tikzset{every loop/.style={min distance=4mm,in=240,out=300,looseness=10}}
\draw[blackedge] (H-) to [loop below] (H-);

  \draw[blackedge] (H+) -- (BR) --(H-);
  \draw[blackedge] (LL)--(BR);

\end{tikzpicture}}
    \caption{Two examples of $\tau_3$ operations.}
    \label{fig:tau3-multigraph}
\end{figure}

\subsection{An algorithm for computing $\tau(\Gamma)$}

Using operations $\tau_1$, $\tau_2$ and $\tau_3$ we can define an algorithm that reduces a multigraph $\Gamma=(V(\Gamma),E(\Gamma),w_{\Gamma})$ with binomial weights to a multigraph that is invariant for those operations. That means that no further reductions developed in Section \ref{sec:operations-preserve-gap*} are possible. This algorithm is described in Figure~\ref{fig:algorithm}. 
    
\begin{figure}[htb]
\begin{tikzpicture}[
    startstop/.style={rectangle, rounded corners, minimum width=2.5cm, minimum height=0.7cm, text centered, draw=black, fill=gray!20, font=\bfseries},
    function/.style={rectangle, minimum width=2.0cm, minimum height=0.6cm, text centered, draw=black, fill=white},
    decision/.style={diamond, aspect=2, minimum width=2.5cm, minimum height=0.7cm, text centered, draw=black, fill=gray!10},
    arrow/.style={-{Stealth}, thick}
]

\node (start) [startstop]{$\Gamma_0:=$ input};

\node (f0) [function, below=0.5cm of start] {$\Gamma:=\Gamma_0$,  $t:=0$};
\draw [arrow] (start) -- (f0);

\node (f1) [function, below=0.5cm of f0] {$\Gamma_{\text{old}}:=\Gamma$};
\draw [arrow] (f0) -- (f1);

\node (f2) [function, below=0.5cm of f1] {$(\Gamma,t):=(\tau_1(\Gamma),t+t_1(\Gamma))$};
\draw [arrow] (f1) -- (f2);

\node (f3) [function, below=0.5cm of f2] {$(\Gamma,t):=(\tau_2(\Gamma),t+t_2(\Gamma))$};
\draw [arrow] (f2) -- (f3);

\node (f4) [function, below=0.5cm of f3] {$(\Gamma,t):=(\tau_3(\Gamma),t+t_3(\Gamma))$};
\draw [arrow] (f3) -- (f4);

\node (dec) [decision, below=0.5cm of f4, align=center] {$\Gamma_{\text{old}}= \Gamma$?};
\draw [arrow] (f4) -- (dec);

\node (stop) [startstop, below=1.3cm of dec] {Return $(\Gamma,t)=:(\tau(\Gamma_0), t(\Gamma_0))$};
\draw [arrow] (dec) -- node[right] {Yes} (stop);

\draw [arrow] (dec.west) -| ([xshift=-1.5cm]dec.west) |- (f1.west);
\node [left] at ([xshift=-1.5cm]dec.west |- f3.west) {No};
\end{tikzpicture}

    \caption{An algorithm for computing $\tau(\Gamma_0)$ and $t(\Gamma_0)$}
    \label{fig:algorithm}
\end{figure}

\begin{definition}
Let $\Gamma=(V(\Gamma),E(\Gamma),w_{\Gamma})$ be a multigraph with binomial weights.  The output of the algorithm described in Figure \ref{fig:algorithm}, given the input $\Gamma$, is defined as the pair $(\tau(\Gamma), t(\Gamma))$.     
\end{definition}

Note that $\tau(\Gamma)$ is a weighted simple graph with possible loops, with all edge weights equal to $0$ and all vertices of degree at least $3$.

\begin{proposition}
    Suppose that for a weighted multigraph with binomial weights $\Gamma$ we have $\tau(\Gamma)=\Gamma$. Let $H$ be the graph $\Gamma$ viewed as a simple graph (i.e., we disregard the weights in $\Gamma$).  If $H\in \GnoFourCycle$, then $\gapzvezdica(\Gamma)=\gap(H)$. 
\end{proposition}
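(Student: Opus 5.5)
The plan is to start from $\zeta(\Gamma)$ and undo the subdivisions one edge at a time using Lemma~\ref{lem:P4toP2}, until we arrive at $H$. Since $\tau(\Gamma)=\Gamma$, the multigraph $\Gamma$ is simple (with possible loops), every edge and loop has weight $0$, and every vertex has degree at least $3$. If $\Gamma$ is empty, both sides equal $0$, so we may assume it is not. By definition of $\zeta$, each edge $e=\{u,v\}\in E(\Gamma)$, including each loop with $u=v$, is replaced in $\zeta(\Gamma)$ by $\pathk{u}{v}{2}$, that is, by a walk $u-x_1^e-x_2^e-v$ through two new vertices of degree $2$. Enumerate the edges $e_1,\ldots,e_m$ of $\Gamma$. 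Let $G_0=\zeta(\Gamma)$, and let $G_j$ be obtained from $G_{j-1}$ by deleting $x_1^{e_j},x_2^{e_j}$ and adding the edge $\{u,v\}$, which is a loop at $u$ when $u=v$. Then $G_m=H$, and each $G_j$ is $H$ with the edges $e_{j+1},\ldots,e_m$ subdivided twice. In $G_{j-1}$ the vertices $x_1^{e_j},x_2^{e_j}$ are distinct, have degree $2$, and differ from $u,v$, so the hypotheses of Lemma~\ref{lem:P4toP2} hold. Hence $\gap(G_{j-1})=\gap(G_j)$ as soon as $G_j\in\GnoFourCycle$, and chaining these equalities gives $\gapzvezdica(\Gamma)=\gap(G_0)=\gap(G_m)=\gap(H)$.

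The main step is to show that every partially subdivided graph $G_j$ lies in $\GnoFourCycle$, using only $H\in\GnoFourCycle$. Suppose $u_1\neq u_2$, $v_1\ne v_2$ are vertices of $G_j$ with $\{u_i,v_k\}\in E(G_j)$ for all $i,k$.

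\emph{All four vertices are original vertices of $H$.} Then every edge among them is an edge of $G_j$ between original vertices. Such an edge cannot be one of the subdivided edges, so it is an unsubdivided edge, or an unsubdivided loop, of $H$. This gives a forbidden configuration in $H$, a contradiction.

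\emph{Some vertex is a subdivision vertex.} By symmetry, say $u_1=x$ lies on a subdivided path $a-x-y-b$. Since $x$ has no loop and exactly the two neighbours $a$ and $y$, we get $\{v_1,v_2\}=\{a,y\}$. Then $u_2$ is a neighbour of $y$, and $N_{G_j}[y]=\{x,b\}$ with $u_2\ne x$, so $u_2=b$. Finally, $u_2=b$ must also be adjacent to $a$.
\begin{itemize}
\item If $a\neq b$, the edge $\{a,b\}$ of $H$ is exactly the one that was subdivided. Because $H$ is simple, $a$ and $b$ are not adjacent in $G_j$, a contradiction.
\item If $a=b$, then $a$ would need a loop in $G_j$. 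But the unique possible loop of $a$ in $H$ is the one that was subdivided, again a contradiction.
\end{itemize}

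The only genuinely delicate point is this case analysis. One has to check carefully that simplicity of $H$, meaning at most one edge and at most one loop per pair, is exactly what rules out the configurations created by the degree-$2$ vertices. Loops need separate care, both in the case $u=v$ of Lemma~\ref{lem:P4toP2} and in the case $a=b$ above. Once $G_j\in\GnoFourCycle$ is established for all $j$, the induction closes the proof.
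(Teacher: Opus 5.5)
Your proof is correct, but it takes a more hands-on route than the paper. The paper's proof is two lines. Since $H$ has no vertices of degree $2$ and every edge of $\Gamma$ has weight $0$, one gets $\kappa(H)=\Gamma$. The claim is then just the identity \eqref{eq:gap-gapzvezdica}, $\gap(G)=\gapzvezdica(\kappa(G))$ for $G\in\GnoFourCycle$, applied to $G=H$.

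You bypass $\kappa$ and re-derive that identity for this particular $H$ (note that $\zeta(\kappa(H))=\zeta(\Gamma)$). You do this by chaining Lemma~\ref{lem:P4toP2} one edge at a time, which is exactly the mechanism behind \eqref{eq:gap-zeta}. Your route gains two things:
\begin{enumerate}
\item It explicitly checks that every partially subdivided graph $G_j$ lies in $\GnoFourCycle$. The lemma needs this twice: as a hypothesis on $G_{j-1}$, and for the equality clause on $G_j$. Your case analysis covers both because it handles every $j$, including $j=0$. The paper leaves the membership of intermediate graphs implicit when it invokes Lemma~\ref{lem:P4toP2} to get \eqref{eq:gap-zeta}.
\item It never uses that vertices have degree at least $3$. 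It only uses that all weights are $0$ and that $H$ has no parallel edges or repeated loops. So it proves $\gapzvezdica(\Gamma)=\gap(H)$ for a somewhat wider class of $\Gamma$ than the paper's argument, which needs $V_{d=2}(H)=\emptyset$ to get $\kappa(H)=\Gamma$.
\end{enumerate}
The paper's version is shorter because it reuses the general identity rather than re-proving it.
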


\begin{proof}
Note that $\kappa(H)=\Gamma$ since $H$ contains only vertices of at least $3$. So $\gapzvezdica(\Gamma)=\gap(H)$ by equation~\eqref{eq:gap-gapzvezdica}.
\end{proof}

The next example shows that the above proposition is not true if $H \notin \GnoFourCycle$.

\begin{example}
Let $\Gamma = W_5$ be the wheel weighted multigraph with all edge weights equal to $0$. Then $\tau(\Gamma) = \Gamma$ and the graph $H = W_5$ has $\gap(H) = 2$ by Example~\ref{ex:W5}. On the other hand, we will show in Proposition~\ref{prop:at-most-6-vertices} that $\gapzvezdica(\Gamma) = 0$. To do that, we will need another tool, developed in Section~\ref {sec:vtx-removal}.
\end{example}

We summarize the findings of this section in the following theorem.
  
  \begin{theorem}\label{thm:gap-tau}
Let $\Gamma=(V(\Gamma),E(\Gamma),w_{\Gamma})$ be a weighted multigraph with binomial weights. Then
    $$\gapzvezdica(\Gamma)=\gapzvezdica(\tau(\Gamma))+t(\Gamma).$$
\end{theorem}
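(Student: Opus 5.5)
The plan is to prove the identity by induction on the number of passes through the loop of the algorithm in Figure~\ref{fig:algorithm}, combining the three per-step identities that are already available. Proposition~\ref{prop:tau1} gives $\gapzvezdica(\Gamma)=\gapzvezdica(\tau_1(\Gamma))+t_1(\Gamma)$ for an arbitrary weighted multigraph with binomial weights. Proposition~\ref{prop:tau2} gives $\gapzvezdica(\Gamma)=\gapzvezdica(\tau_2(\Gamma))+t_2(\Gamma)$, but only when $\Gamma$ has no $1$-edges. Equation~\eqref{eq:tau3} gives $\gapzvezdica(\Gamma)=\gapzvezdica(\tau_3(\Gamma))+t_3(\Gamma)$. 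I would prove the following invariant: at every point of the algorithm, the current pair $(\Gamma,t)$ satisfies $\gapzvezdica(\Gamma_0)=\gapzvezdica(\Gamma)+t$. This holds trivially at initialization, where $\Gamma=\Gamma_0$ and $t=0$.

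For the step, each update $(\Gamma,t)\mapsto(\tau_i(\Gamma),t+t_i(\Gamma))$ preserves the invariant by the corresponding identity. The only point to check is the hypothesis of Proposition~\ref{prop:tau2}, namely that $\tau_1(\Gamma)$ has no edges of weight $1$. This is immediate from Definition~\ref{def:tau1}, since all edges and loops of $\tau_1(\Gamma)$ have weight $0$. There is a minor subtlety: Definition~\ref{def:tau1} is stated for connected $\Gamma$, while $\tau_2$ and $\tau_3$ can produce disconnected multigraphs. I would apply $\tau_1$ componentwise, taking the disjoint union of the results and summing the $t_1$ values. This is legitimate because $\zeta$ of a disjoint union is the disjoint union of the $\zeta$'s, and $\gap$ is additive over components (an optimal set-join cover splits by components). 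With that convention, Proposition~\ref{prop:tau1}, whose proof never used connectedness, applies verbatim.

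It remains to show termination, so that the output $(\tau(\Gamma_0),t(\Gamma_0))$ is well defined and the invariant at the stop gives the theorem. For this I would use a potential such as $|V(\Gamma)|+|E(\Gamma)|$. Each $\tau_i$ never increases it: $\tau_1$ contracts $1$-components and collapses parallel edges; $\tau_2$ deletes vertices; $\tau_3$ deletes isolated vertices and suppresses degree-$2$ vertices, and each suppression replaces one vertex and two edges by one edge, which strictly decreases the potential. If none of the three operations changes $\Gamma$, then $\Gamma_{\text{old}}=\Gamma$ and the algorithm stops. If some $\tau_i$ changes $\Gamma$, I need a strict decrease. The delicate case is $\tau_1$ when there are no $1$-edges but there are parallel $0$-edges, and loop bookkeeping there; collapsing parallel edges still strictly decreases $|E|$, so that case is fine.

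The main obstacle is making this termination/strict-decrease argument airtight in degenerate cases, for instance when $\kappa(\zeta(\cdot))$ in $\tau_3$ turns a cycle component into the empty graph or creates loops. I would handle these by a direct case check that $\kappa\circ\zeta$ never increases $|V|+|E|$ and is the identity exactly when there are no degree-$2$ vertices. Once termination holds, the final pair satisfies the invariant, which is exactly $\gapzvezdica(\Gamma_0)=\gapzvezdica(\tau(\Gamma_0))+t(\Gamma_0)$.
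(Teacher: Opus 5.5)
Your proposal is correct and follows the paper's proof. The paper simply chains Proposition~\ref{prop:tau1}, Proposition~\ref{prop:tau2} and equation~\eqref{eq:tau3} along the algorithm, noting, exactly as you do, that $\tau_2$ is only ever applied to $\tau_1(\Gamma)$, which has no $1$-edges. Your invariant formulation, the componentwise treatment of $\tau_1$ for disconnected multigraphs, and the termination argument via $|V(\Gamma)|+|E(\Gamma)|$ are details the paper leaves implicit, and they are sound.
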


\begin{proof}
The claim follows directly from Propositions \ref{prop:tau1}, \ref{prop:tau2}, and equation~\eqref{eq:tau3}. Note that Proposition~\ref{prop:tau2} can be applied, since in the algorithm we use operation $\tau_2$ on the multigraph $\tau_1(\Gamma)$ which does not contain any $1$-edges.
\end{proof}

\begin{example}\label{ex:big-example-algorithm}
     For a detailed example of the algorithm see Figure~\ref{fig:algorithm-example}, where we present each step on the graph $\Gamma$ from Figure~\ref{fig:huge-gamma}. All black edges in graphs are of weight $0$, and all colored edges are of weight $1$. The input is the graph $\Gamma$ shaded gray in the top left corner. For $\Gamma$ the algorithm runs through three loops, and those are presented in the three columns. Since in the first two columns the top ($\Gamma_{\text{old}}$) and the bottom
       ($\Gamma$) graphs are not the same, the algorithm runs through another loop. In the third column the top and the bottom graph coincide ($\Gamma_{\text{old}}=\Gamma$), hence the algorithm stops. The output is the graph $\Gamma_2^{\circ,\circ}:=\tau(\Gamma)$ (shaded gray in the bottom right), and $t(\Gamma)=4$.
       By Theorem~\ref{thm:gap-tau} it follows that $$\gapzvezdica(\Gamma)=\gapzvezdica(\Gamma_2^{\circ,\circ})+4,$$
which gives a faster approach to Example~\ref{ex:huge-gamma-chapter-4}. In Example~\ref{ex:P200} we will compute $\gapzvezdica(\Gamma_2^{\circ,\circ})=0$.
\end{example}

   \begin{figure}[htb]
       \centering
     \begin{tikzpicture}[
  vertex/.style={circle,draw,fill=white,inner sep=1.2pt,minimum size=5pt},
  cyanedge/.style={cyan,thick},
  orangeedge/.style={orange,thick},
  tealedge/.style={teal,thick},
  blackedge/.style={black,thick}
  ]
  
      \begin{scope}[shift={(-0.7,1.4)}]
      \draw[fill=gray!30] (-0.5,2.75) rectangle (6,-1.25);
      \hugeGamma
       \node[rectangle,draw=none] at (3.25,-1) {$t=0$};        
       \node[rectangle,draw=none] at (2.75,-2) {$\downarrow$};
       \end{scope}

       \begin{scope}[scale=0.7,shift={(2,-4.5)}]
      \draw (-0.5,2.75) rectangle (3,0.5);
      \tauGammaNoLabel
       \node[rectangle,draw=none] at (1.25,0.75) {$t=4$};        
       \node[rectangle,draw=none] at (1.25,0.25) {$\downarrow$};
       \end{scope}

       \begin{scope}[scale=0.7,shift={(2,-7.5)}]
      \draw (-0.5,2.75) rectangle (3,0.5);
      \tauGammaNoLabel
       \node[rectangle,draw=none] at (1.25,0.75) {$t=4$};        
       \node[rectangle,draw=none] at (1.25,0.25) {$\downarrow$};
       \end{scope}
       
       \begin{scope}[scale=0.7,shift={(2,-9.5)}]
      \draw (-0.5,2) rectangle (3,0.5);
      \begin{scope}[shift={(1,0)}]\tauGammaDeg
      \end{scope}
       \node[rectangle,draw=none] at (1.25,0.75) {$t=4$};        
       \end{scope}

       \begin{scope}[scale=0.7,shift={(7.25,-2)}]
      \draw (-0.5,2) rectangle (3,0.5);
      \begin{scope}[shift={(1,0)}]\tauGammaDeg
      \end{scope}
       \node[rectangle,draw=none] at (1.25,0.75) {$t=4$};
       \node[rectangle,draw=none] at (1.25,0.25) {$\downarrow$};
       \end{scope}

      \begin{scope}[scale=0.7,shift={(8,-4.5)}]
      \draw (-0.5,2) rectangle (1.5,0.5);
      \tauGammatauGamma
       \node[rectangle,draw=none] at (0.5,0.75) {$t=4$};        
       \node[rectangle,draw=none] at (0.5,0.25) {$\downarrow$};
       \end{scope}
      
       \begin{scope}[scale=0.7,shift={(8,-7)}]
      \draw (-0.5,2) rectangle (1.5,0.5);
      \tauGammatauGamma
       \node[rectangle,draw=none] at (0.5,0.75) {$t=4$};        
       \node[rectangle,draw=none] at (0.5,0.25) {$\downarrow$};
       \end{scope}
      
       \begin{scope}[scale=0.7,shift={(8,-9.5)}]
      \draw (-0.5,2) rectangle (1.5,0.5);
      \tauGammatauGamma
       \node[rectangle,draw=none] at (0.5,0.8) {$t=4$};        
       \end{scope}
       
        \begin{scope}[scale=0.7,shift={(12.5,-2)}]
      \draw (-0.5,2) rectangle (1.5,0.5);
      \tauGammatauGamma
       \node[rectangle,draw=none] at (0.5,0.75) {$t=4$};        
       \node[rectangle,draw=none] at (0.5,0.25) {$\downarrow$};
       \end{scope}

      \begin{scope}[scale=0.7,shift={(12.5,-4.5)}]
      \draw (-0.5,2) rectangle (1.5,0.5);
      \tauGammatauGamma
       \node[rectangle,draw=none] at (0.5,0.75) {$t=4$};        
       \node[rectangle,draw=none] at (0.5,0.25) {$\downarrow$};
       \end{scope}
       \begin{scope}[scale=0.7,shift={(12.5,-7)}]
      \draw (-0.5,2) rectangle (1.5,0.5);
      \tauGammatauGamma
       \node[rectangle,draw=none] at (0.5,0.75) {$t=4$};        
       \node[rectangle,draw=none] at (0.5,0.25) {$\downarrow$};
       \end{scope}
       \begin{scope}[scale=0.7,shift={(12.5,-9.5)}]
      \draw[fill=gray!30] (-0.5,2) rectangle (1.5,0.5);
      \tauGammatauGamma
       \node[rectangle,draw=none] at (0.5,0.8) {$t=4$};        
       \end{scope}
       
       \draw[thick] (3.5,-5.8)--(4,-5.8)--(4,-0.5);
       \draw[->,thick](4,-0.5)--(4.72,-0.5);
       
       \begin{scope}[shift={(3.18,0)}]
       \draw[thick] (3.5,-5.8)--(4.5,-5.8)--(4.5,-0.5);
       \draw[->,thick](4.5,-0.5)--(5.23,-0.5);
        \end{scope}
       \end{tikzpicture}
       \caption{An illustration of an algorithm presented in Figure~\ref{fig:algorithm} on graph $\Gamma$ from Figure~\ref{fig:huge-gamma}. All black edges in graphs are of weight $0$ and all colored edges are of weight $1$.}
       \label{fig:algorithm-example}
   \end{figure}

\section{Computing $\gapzvezdica(\Gamma)$ with vertex removal}\label{sec:vtx-removal}

The algorithm developed in Section \ref{sec:tau} allows us to compute the gap in many but not all cases. 
In this section we develop a complementary method to compute $\gapzvezdica(\Gamma)$ of a reduced multigraph $\tau(\Gamma)$ inductively by removing vertices. This method has high computational complexity but it allows us to compute the gap of for every graph $G \in \GnoFourCycle$.

To prove the main result of this section, Theorem~\ref{thm:gapzvezdica-one-vertex-removed}, we need to return to simple graphs and their gaps. 

\subsection{Vertex removal for simple graphs}

\begin{lemma}\label{lem:st and gap inq}
Let $G$ be a simple graph with $|V(G)|\geq 2$, and $v\in V(G)$. Then
\begin{equation}\label{eq:with_gap}
\gap(G(v))-1\leq \gap(G)\leq \gap(G(v))+1.
\end{equation}
 Moreover, if $G\in \GnoFourCycle$ and there exists an $\mathrm{ABC}$ cover $\C$ of $G$ such that $v$ is of Type A with respect to $\C$, then the left inequality is an equality. 
\end{lemma}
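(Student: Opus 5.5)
Since $\gap(G)=|V(G)|-\stp(G)$ and $|V(G(v))|=|V(G)|-1$, inequality \eqref{eq:with_gap} is equivalent to
$$\stp(G(v))\le \stp(G)\le \stp(G(v))+2 .$$
I will prove each of these two bounds on $\stp$ by converting set-join covers, using the characterisation $\stp(G)=\min|V(\C)|$ from \cite[Theorem 2.12]{KBS24}. After that I will treat the equality case separately.

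\emph{Upper bound $\stp(G)\le\stp(G(v))+2$.} Take an optimal set-join cover $\C'$ of $G(v)$ and add the single set-join $\{v\}\vee N_G[v]$, when $N_G[v]\neq\emptyset$. This covers every edge at $v$, including a loop at $v$, since then $v\in N_G[v]$. It introduces at most two new components, so $\stp(G)\le\stp(G(v))+2$, which gives the left inequality $\gap(G(v))-1\le\gap(G)$.

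\emph{Lower bound $\stp(G(v))\le\stp(G)$.} Take an optimal cover $\C$ of $G$ and delete $v$ from every component. Any set-join in which one side becomes empty is discarded. Every edge of $G(v)$ was covered by some $\mc K\vee\mc L$ with both endpoints different from $v$, so that edge is still covered after the deletion. The map $\mc K\mapsto\mc K\setminus\{v\}$ does not increase the number of distinct components, so the order does not grow. This gives $\stp(G(v))\le\stp(G)$, which is the right inequality.

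\emph{Equality case.} Let $G\in\GnoFourCycle$ and let $\C$ be an $\mathrm{ABC}$ cover with $v$ of Type A. Then $v\in V_1(\C)$, and $\{v\}\vee N^{\star}_{\C}[v]\in\C$ with $|N^{\star}_{\C}[v]|\ge2$. By Remark~\ref{rem:ABC}, $v$ belongs to no component of size at least two. Apply Corollary~\ref{cor:2c2d} with $\mc K=N^{\star}_{\C}[v]$. It produces a cover $\C_1$ of $G(v)$ with $|V(\C_1)|\le|V(\C)|-2$. Hence $\stp(G(v))\le\stp(G)-2$, and combined with the upper bound we get $\stp(G)=\stp(G(v))+2$, which is exactly equality on the left.

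I expect the only delicate point to be the component count in the lower bound and in Corollary~\ref{cor:2c2d}. Distinct components can collapse to the same set after $v$ is deleted, but this only lowers the count, so it helps rather than hurts. The components $\{v\}$ and $\mc K$ are both removed, and this uses that $\{v\}\vee\mc K$ is the unique set-join containing $\mc K$, by Proposition~\ref{prop:gen4cycle}.
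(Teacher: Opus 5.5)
Your proposal is correct and follows the paper's proof: the same equivalence with $\stp(G(v))\le\stp(G)\le\stp(G(v))+2$, the same cover $\C'\cup\{\{v\}\vee N_G[v]\}$ for the upper bound, and the same appeal to Corollary~\ref{cor:2c2d} with $\mc K=N^{\star}_{\C}[v]$ for the Type~A equality case. The only difference is that you spell out the restriction argument for $\stp(G(v))\le\stp(G)$, where the paper simply cites that $G(v)$ is an induced subgraph of $G$.
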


\begin{proof}
    Note that \eqref{eq:with_gap} is equivalent to 
    $$\stp(G(v))+2\geq \stp(G)\geq \stp(G(v)).$$
    The inequality $\stp(G)\geq \stp(G(v))$ follows from the fact that $G(v)$ is an induced subgraph of $G$. If we add $\{v\}\vee N_G[v]$ to a set-join cover $\C'$ of $G(v)$, we get a set-join cover of $G$. As this introduces at most two new components, this implies  $\stp(G(v))+2\geq \stp(G)$. 

   Now, assume that $G\in \GnoFourCycle$ and there exists an ABC cover $\C$ of $G$, where $\mc K\vee \{v\} \in \C$ with $|\mc K| \geq 2$. By Corollary~\ref{cor:2c2d} there exists a set-join cover $\C'$ of $G(v)$ with $|V(\C')|\leq |V(\C)|-2$. Hence, $\stp(G(v))\leq \stp(G)-2$, as desired. 
\end{proof}

\begin{remark}\label{cor:G-vtx-removal}
If $|V(G)|\geq 2$ and there exist  $v,w \in V(G)$ and $k\in\bN$ so that $\gap(G(v))= k-1$ and $\gap(G(w))= k+1$, then $\gap(G) = k$.
\end{remark}

Next, we express the gap of a graph $G$ in terms of gaps of graphs $G(v)$, where $v$ runs over a certain subset of $V(G)$. Using this lemma inductively, we can in principle compute $\gap(G)$ for any graph $G \in \GnoFourCycle$. Since the gap of a graph is the sum of the gaps of its components, we may assume that the graph is connected.

\begin{lemma}\label{lem:verify-thm-for-HDV-NHDV-only}
   Let $G\in \GnoFourCycle$ be a connected graph with $|V(G)|\geq 2$. Let $\mc S$ be the set of all  vertices in $v \in V(G)$ satisfying one of the following conditions:
     \begin{enumerate}[(A)]
         \item\label{thm:part:HDV-only} $\deg(v)\ne 2$, 
         \item[(B)]          $\deg(v)=2$, and there exists an induced walk  $w_1-\pathk{v}{w_2}{r}$ in $G$, such that $w_i\in V_{d\ne 2}(G)$ for $i\in[2]$ and $r$ is even.
         \label{thm:part:deg-two-only}
     \end{enumerate} 
Then
\begin{equation} \label{eq:setS}
    \gap(G)=\max\{0,\max_{v\in \mc S}\{\gap(G(v))-1\}\}.
\end{equation}
\end{lemma}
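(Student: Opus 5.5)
We prove the two inequalities in \eqref{eq:setS} separately. The inequality ``$\geq$'' is immediate: $\gap(G)\geq 0$, and by Lemma~\ref{lem:st and gap inq} we have $\gap(G)\geq \gap(G(v))-1$ for every $v\in V(G)$, in particular for every $v\in\mc S$. So the work is in ``$\leq$''. If $\gap(G)=0$ there is nothing to prove. If $\gap(G)\geq 1$, it suffices to find an $\mathrm{ABC}$ cover $\C$ of $G$ (Proposition~\ref{prop:Types}) and a vertex $v\in\mc S$ of Type A with respect to $\C$. The equality part of Lemma~\ref{lem:st and gap inq} then gives $\gap(G)=\gap(G(v))-1$, and the maximum is attained.

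\emph{Step 1: a Type A vertex exists.} Fix an $\mathrm{ABC}$ cover $\C$ of $G$ and suppose no vertex is of Type A. By Remark~\ref{rem:ABC}, every neighbour of a Type C vertex is of Type A. Since $G$ is connected with $|V(G)|\geq 2$, every vertex has a neighbour, so there are no Type C vertices either. Hence every vertex is of Type B, and every component of $\C$ is a singleton. Then $\stp(G)=|V(\C)|=|V(G)|$ and $\gap(G)=0$, a contradiction. So some vertex $x$ is of Type A.

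\emph{Step 2: moving the Type A vertex into $\mc S$.} If $\deg(x)\neq 2$, then $x\in\mc S$ by (A) and we are done. Otherwise, by Remark~\ref{rem:ABC}, both neighbours of $x$ are of Type C. Extend $x$ in both directions to a maximal walk $w_1-x_1-\cdots-x_k-w_2$ whose interior vertices $x_j$ have degree $2$ and whose endpoints satisfy $w_1,w_2\in\degnottwo(G)$. If no such endpoints exist, then $G$ is a cycle. Since $G\in\GnoFourCycle$ it is not $C_4$, so $\gap(G)=0$ by Proposition~\ref{prop:from KBS24}~\eqref{prop:2.1(4)}, which was excluded.

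Along the walk the types alternate between A and C, including at the endpoints:
\begin{itemize}
\item no vertex is of Type B, since a Type B vertex has only Type B or Type A neighbours that are joined to it by a singleton, incompatible with a neighbouring Type C vertex;
\item a Type A vertex of degree $2$ has both neighbours of Type C;
\item every neighbour of a Type C vertex is of Type A.
\end{itemize}
This is the same mechanism as in Lemma~\ref{lem:remove-x1-x2}. If some endpoint $w_i$ is of Type A, it lies in $\mc S$ by (A). Otherwise both $w_1$ and $w_2$ are of Type C. Then alternation forces $x_1$ to be of Type A and $k$ to be odd. The walk $w_1-\pathk{x_1}{w_2}{k-1}$ has $r=k-1$ even, so $v=x_1$ satisfies (B), and $v$ is of Type A.

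\emph{Main obstacle.} The delicate part is Step 2. One must check that alternation really propagates through the whole degree-$2$ path, including the case $w_1=w_2$, and that the walk is induced as (B) requires. The walk is induced because $G\in\GnoFourCycle$ and the interior vertices have degree $2$. When $w_1=w_2$, alternation with both endpoints of Type C still gives $k$ odd. This is consistent with Lemma~\ref{lem:remove-x1-x2}, which forbids the A/C pattern only when $k=2$.
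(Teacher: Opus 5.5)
Your proof is correct. Its second half takes a different route from the paper's.

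\textbf{Comparison with the paper.} The paper first proves the unrestricted recursion $\gap(G)=\max\{0,\max_{v\in V(G)}\{\gap(G(v))-1\}\}$ via a Type~A vertex of an $\mathrm{ABC}$ cover, which is essentially your Step~1. After that it stops using covers. For each $u\notin\mc S$ it exhibits $v\in\{w_1,w_2,u_1\}\subseteq\mc S$ with $\gap(G(u))=\gap(G(v))$. The argument is that deleting $u$ creates pendent paths, which are stripped with Proposition~\ref{prop:from KBS24}~\eqref{prop:2.1(1)}, with a case split on the parity of the position of $u$ and of $r$.

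You instead stay inside one $\mathrm{ABC}$ cover and move the Type~A witness into $\mc S$ using the forced A/C alternation along the maximal degree-$2$ path.
\begin{itemize}
\item Your route avoids the leaf stripping and the parity bookkeeping. It also says slightly more: when $\gap(G)>0$, every $\mathrm{ABC}$ cover of $G$ has a Type~A vertex in $\mc S$, and the maximum is attained there.
\item The paper's route gives a cover-free statement instead: every value $\gap(G(u))$, $u\in V(G)$, already occurs on $\mc S$. The same leaf-stripping mechanism is reused in the proof of Theorem~\ref{thm:gapzvezdica-one-vertex-removed}.
\end{itemize}

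\textbf{One justification needs repair.} The walk is not induced ``because $G\in\GnoFourCycle$ and the interior vertices have degree $2$''. Membership in $\GnoFourCycle$ does not rule out an edge $\{w_1,w_2\}$, or a loop at an endpoint, when $k$ is odd. For example, $k=1$ with a triangle on $w_1,x_1,w_2$ is allowed.

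In the only case where you actually need (B), however, $w_1$ and $w_2$ are both of Type~C. By Remark~\ref{rem:ABC}, two Type~C vertices are never adjacent. A Type~C vertex also carries no loop, since it would then be its own neighbour and hence of Type~A. So your walk is induced even in the strict sense, and $x_1\in\mc S$ under either reading of ``induced walk'' in (B).
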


\begin{proof}
We first prove the following claim:
Let $G\in \GnoFourCycle$  be a graph with $|V(G)| \ge 2$ that contains no vertices of degree $0$. Then
\begin{equation}\label{eq:gap_inductive}
\gap(G)=\max\{0,\max_{v\in V(G)}\{\gap(G(v))-1\}\}.
\end{equation}
Note that $\gap(G)\geq \gap(G(v))-1$ for all $v \in V(G)$ by Lemma~\ref{lem:st and gap inq}, so $$\gap(G)\ge\max\{0,\max_{v\in V(G)}\{\gap(G(v))-1\}\}.$$   If there exists an optimal set-join cover $\C$ of $G$ containing $\mc K\vee \{w\}$ with $|\mc K| \geq 2$, then by the proof of Proposition~\ref{prop:Types} there exists an ABC cover of $G$ where $w$ is of type $A$. By Lemma~\ref{lem:st and gap inq}  we now have $\gap(G)=\gap(G(w))-1$, and the claim follows. Otherwise, $G$ has the unique optimal set-join cover $\C$ where all its components are singletons. Since $G$ has no vertices of degree $0$, this means that $\gap (G)=0$ and the claim follows again. (Note, the equation \eqref{eq:gap_inductive} already allows us to compute $\gap(G)$ inductively. We proceed to show that the set of vertices that need to be removed in the process can be reduced to the set $\mc S$.)

For $G=C_n$, $n\ne 4$, the set $\mc S$ is empty and $\gap(G)=0$,  so the equation~\eqref{eq:setS} holds. From now on we assume that $G$ is not an $n$-cycle. Using the above claim, it suffices to show that for every $u \in V(G) \setminus \mc S$, there exists $v \in \mc S$ so that $\gap(G(v))=\gap(G(u))$. 

Any $u \in V(G) \setminus \mc S$ satisfies $\deg(u)=2$, and since $G$ is not a cycle, $u$ is a vertex of an induced walk $w_1-u_1-\ldots-u_{r+1}-w_2$ in $G$, $r\geq 0$, such that $\deg(w_i)\ne 2$ for $i\in[2]$, $\deg(u_i)=2$ for $i\in[r+1]$, and $u=u_{i_0}$ for some $i_0\in [r+1]$. 
(We allow $w_1=w_2$ or $\deg(w_i)=1$.) Note $w_1, w_2 \in \mc S$, since they both satisfy the assumption (A). If $r$ is even, then $u_1 \in \mc S$, since it satisfies the assumption (B). 

We claim that we can choose $v \in \{w_1,w_2,u_1\}$  so that   $\gap(G(u))=\gap(G(v))$. Observe that $u \in  V(G) \setminus \mc S$ implies that at least one of its neighbors is $G$ becomes a leaf in $G(u)$. Hence, Proposition~\ref{prop:from KBS24}~(\ref{prop:2.1(1)}) allows us to conclude:
    $$\gap(G(u))=\gap(G(\{w_1,u_1\ldots,u_{i_0-1},u\}))=\gap(G(w_1))$$
    if $i_0$ is even, 
    $$\gap(G(u))=\gap(G(\{u,u_{i_0+1}\ldots,u_r,w_2\}))=\gap(G(w_2))$$
   if $i_0$ is odd and $r$ is even,
    and 
    $$\gap(G(u))=\gap(G(\{u_1,\ldots,u_{i_0-1},u\}))=\gap(G(u_1)).$$ 
    if $i_0$ is odd and $r$ is odd. 
\end{proof}

\subsection{Vertex removal for multigraphs}

In this section, we use Lemma~\ref{lem:verify-thm-for-HDV-NHDV-only} to prove a similar result for weighted multigraphs. 

\begin{lemma} \label{lem:gapzvezdica-one-vertex-removed}
Let $\Gamma=(V(\Gamma),E(\Gamma),w_{\Gamma})$ be an nonempty weighted multigraph with all weights equal to $0$ and $v \in V(\Gamma)$. Then
$$\gapzvezdica(\Gamma)\ge \gapzvezdica(\Gamma(v))-1.$$
\end{lemma}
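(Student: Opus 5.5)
The plan is to pass to the simple graph $\zeta(\Gamma)$, where vertex removal is already controlled by Lemma~\ref{lem:st and gap inq}, and to use the special form of $\Gamma_{-v}$ when every edge has weight $0$.

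First I would note that every edge and loop incident to $v$ has weight $0$. Then, by Remark~\ref{rem:gamma-v}, the technical vertex removal coincides with ordinary removal: $\Gamma_{-v}=\Gamma(v)$. Checking this against the definition, $f_1=k_1=\ell_1=0$. Hence no leaves are deleted and no new vertices $v_i'$ or $u_i'$ are added. Leaves attached to $v$ by $0$-edges stay in $\Gamma(v)$ as isolated vertices, exactly as in $\Gamma_{-v}$, and $0$-loops at $v$ simply disappear.

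Combining this with Lemma~\ref{lem:zetaGamma-v}, I get
\[
\gapzvezdica(\Gamma(v))=\gapzvezdica(\Gamma_{-v})=\gap(\zeta(\Gamma)(v)).
\]
So the statement reduces to the inequality $\gap(\zeta(\Gamma))\ge \gap(\zeta(\Gamma)(v))-1$ for the simple graph $\zeta(\Gamma)$. This is exactly the left inequality in \eqref{eq:with_gap} of Lemma~\ref{lem:st and gap inq}, applied to $G=\zeta(\Gamma)$ and the vertex $v\in V(\Gamma)\subseteq V(\zeta(\Gamma))$. That inequality needs no $\GnoFourCycle$ assumption. Its proof only adds the set-join $\{v\}\vee N_G[v]$ to a cover of $G(v)$, which creates at most two new components.

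The only remaining point is the hypothesis $|V(\zeta(\Gamma))|\ge 2$ in Lemma~\ref{lem:st and gap inq}. If $\zeta(\Gamma)$ has a single vertex, then $\Gamma=K_1$ with no edges. In that case $\Gamma(v)$ is empty, so $\gapzvezdica(\Gamma(v))=0$, and the inequality holds trivially because gaps are nonnegative. I expect no real obstacle. The one step to verify carefully is the identification $\Gamma_{-v}=\Gamma(v)$, in particular the bookkeeping for $0$-leaves and $0$-loops at $v$, so that Lemma~\ref{lem:zetaGamma-v} applies verbatim.
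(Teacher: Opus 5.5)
Your proposal is correct and follows the paper's proof essentially verbatim: you identify $\Gamma_{-v}=\Gamma(v)$ via Remark~\ref{rem:gamma-v}, apply Lemma~\ref{lem:zetaGamma-v}, and then use the left inequality of Lemma~\ref{lem:st and gap inq} for $\zeta(\Gamma)$. The only cosmetic difference is that the paper first reduces to connected $\Gamma$ and then disposes of the one-vertex case, whereas you check the hypothesis $|V(\zeta(\Gamma))|\ge 2$ directly (it fails only for $\Gamma=K_1$ without loops), which works equally well.
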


\begin{proof}
Since $\gapzvezdica(\Gamma_1 \cup \Gamma_2) = \gapzvezdica(\Gamma_1) + \gapzvezdica(\Gamma_2)$ we may assume without loss of generality that $\Gamma$ is connected. If $\Gamma$ contains only one vertex, then $\gapzvezdica(\Gamma(v)) = 0$ and the claim is true. So suppose $|V(\Gamma)| \ge 2$. Since $\Gamma$ contains only edges of weight $0$, we have $\Gamma(v) = \Gamma_{-v}$ by Remark~\ref{rem:gamma-v}, so 
$$\gapzvezdica(\Gamma) = \gap(\zeta(\Gamma)) \ge \gap(\zeta(\Gamma)(v)) - 1 = \gapzvezdica(\Gamma_{-v}) - 1 = \gapzvezdica(\Gamma(v)) - 1 $$
by Lemmas~\ref{lem:st and gap inq} and \ref{lem:zetaGamma-v}.
\end{proof}

Let $\Gamma$ be a weighted multigraph. A set $\mc I \subseteq V(\Gamma)$ is called \emph{independent}, if the subgraph of $\Gamma$ induced on $\mc I$ contains no edges. The cardinality of the biggest independent set in $\Gamma$ is called \emph{independence number} and denoted by $\alpha(\Gamma)$. The independence number  $\alpha(\Gamma)$ can be used to find a lower bound on the $\gapzvezdica(\Gamma)$.

\begin{corollary}\label{cor:removing-k-vertices}
    Let $\Gamma$ be a weighted multigraph with all weights equal to  $0$. For any vertices $v_1,\ldots, v_{k}\in V(\Gamma)$ we have
    $$\gapzvezdica(\Gamma)\geq 
    \gapzvezdica(\Gamma(\{v_1,\ldots,v_{k}\}))-k.$$ 
    In particular, 
  \begin{equation}\label{eq:gap-alpha-ineq}\gapzvezdica(\Gamma)\geq 2 \alpha(\Gamma)-|V(\Gamma)|.
  \end{equation}
\end{corollary}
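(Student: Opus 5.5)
The first inequality follows by induction on $k$ from Lemma~\ref{lem:gapzvezdica-one-vertex-removed}. Removing a vertex from a multigraph whose weights are all $0$ (the operation $\Gamma(v)$, which deletes $v$ and its incident edges) gives again a weighted multigraph with all weights equal to $0$. So Lemma~\ref{lem:gapzvezdica-one-vertex-removed} can be applied repeatedly:
$$\gapzvezdica(\Gamma)\geq \gapzvezdica(\Gamma(v_1))-1\geq \gapzvezdica(\Gamma(\{v_1,v_2\}))-2\geq\cdots\geq \gapzvezdica(\Gamma(\{v_1,\ldots,v_k\}))-k.$$
One detail needs care. Lemma~\ref{lem:gapzvezdica-one-vertex-removed} assumes a nonempty multigraph. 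At each step we remove a vertex that is still present, so the multigraph we apply the lemma to is nonempty. We should also note that $\Gamma(\{v_1,\ldots,v_j\})(v_{j+1})=\Gamma(\{v_1,\ldots,v_{j+1}\})$, which is immediate. If the $v_i$ are not distinct, we discard repetitions; this only makes $k$ smaller and so strengthens the bound.

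For \eqref{eq:gap-alpha-ineq}, take a maximum independent set $\mc I$ with $|\mc I|=\alpha(\Gamma)$, and remove the $k=|V(\Gamma)|-\alpha(\Gamma)$ vertices of $V(\Gamma)\setminus \mc I$. The result $\Gamma(V(\Gamma)\setminus\mc I)$ is the subgraph induced on $\mc I$. By independence it has no edges, and in particular no loops, so it consists of $\alpha(\Gamma)$ isolated vertices. Its $\zeta$-image is the empty graph $\alpha(\Gamma)K_1$, since $\zeta$ adds no vertices when there are no edges. Each vertex of $K_1$ must lie in a component but is covered by no set-join, so the empty cover gives $\stp(\alpha(\Gamma)K_1)=0$, and hence $\gapzvezdica(\alpha(\Gamma) K_1)=\alpha(\Gamma)$. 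This is consistent with the paper's use of $\gapzvezdica(3K_1)=3$. Substituting into the first inequality gives
$$\gapzvezdica(\Gamma)\geq \alpha(\Gamma)-(|V(\Gamma)|-\alpha(\Gamma))=2\alpha(\Gamma)-|V(\Gamma)|.$$

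The only delicate point is the value $\gap(mK_1)=m$. The paper's convention for the empty graph sets $\gap=0$ only when there are no vertices at all. Here, the set of vertices of degree $0$ is counted in the gap, as the use of $t_3$ in \eqref{eq:tau3} already shows, so I expect no real obstacle. The rest is bookkeeping.
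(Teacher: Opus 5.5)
Your proof is correct and follows the paper's argument. The first inequality comes from iterating Lemma~\ref{lem:gapzvezdica-one-vertex-removed}, and \eqref{eq:gap-alpha-ineq} comes from deleting the complement of a maximum independent set, which leaves $\alpha(\Gamma)$ isolated vertices with $\gapzvezdica$ equal to $\alpha(\Gamma)$. Your extra remarks (repeated vertices, nonemptiness at each step, and the empty cover giving $\stp=0$ for an edgeless graph) only make explicit what the paper leaves implicit.
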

\begin{proof}
    The first claim follows from Lemma~\ref{lem:gapzvezdica-one-vertex-removed} by induction on $k$. Let $I$ be maximal independent set in $\Gamma$, let $k = |V(\Gamma)| - \alpha(\Gamma)$ and $V(\Gamma) \setminus I = \{v_1,\ldots,v_{k}\}$. Then 
    $$\gapzvezdica(\Gamma) \geq  \gapzvezdica(\Gamma(\{v_1,\ldots,v_{k}\}))-k = \alpha(\Gamma) - (|V(\Gamma)| - \alpha(\Gamma)) 
     = 2 \alpha(\Gamma)-|V(\Gamma)|,$$
    since $\Gamma(\{v_1,\ldots,v_{k}\})$ contains $\alpha(\Gamma)$ vertices and no edges.
\end{proof}

The above estimate of $\gapzvezdica$ in terms of independence number is sharp for some families of graphs (see e.g. Example~\ref{ex:Kmn}) while in general $\gapzvezdica(\Gamma)- 2 \alpha(\Gamma)+|V(\Gamma)|$ can be arbitrarily large (see e.g. Example~\ref{ex:Gamma12}).

Let $G \in \GnoFourCycle$, and $\Gamma = \tau(\kappa(G))$. ($\Gamma$ is a simple multigraph with all weights equal to $0$ and each vertex of degree at least 3.) By Theorem~\ref{thm:gap-tau} we have:
$$\gap(G) = \gapzvezdica(\Gamma) + t(\kappa(G)).$$ To compute $\gapzvezdica(\Gamma)$ we may assume that $\Gamma$ is a connected graph. In our examples, we will apply the following theorem only to multigraphs of the form $\Gamma = \tau(\kappa(G))$, but we state and prove it more generally. 

\begin{theorem} \label{thm:gapzvezdica-one-vertex-removed}
Let $\Gamma=(V(\Gamma),E(\Gamma),w_{\Gamma})$ be a weighted multigraph with all weights equal to $0$, $|V(\Gamma)| \ge 2$, and no vertices of degree $0$. 
Then
$$\gapzvezdica(\Gamma)=\max\{0,\max_{v\in V(\Gamma)}\{\gapzvezdica(\Gamma(v))-1\}\}.$$
\end{theorem}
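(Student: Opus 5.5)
The plan is to transfer Lemma~\ref{lem:verify-thm-for-HDV-NHDV-only} from $G=\zeta(\Gamma)$ back to $\Gamma$. The inequality ``$\ge$'' is immediate. Since $\Gamma$ has no vertices of degree $0$ and $|V(\Gamma)|\ge 2$, $\zeta(\Gamma)$ has at least two vertices, so $\gapzvezdica(\Gamma)=\gap(\zeta(\Gamma))\ge 0$. Moreover, Lemma~\ref{lem:gapzvezdica-one-vertex-removed} gives $\gapzvezdica(\Gamma)\ge\gapzvezdica(\Gamma(v))-1$ for every $v\in V(\Gamma)$. So the work is the reverse inequality.

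For ``$\le$'' I would set $G=\zeta(\Gamma)\in\GnoFourCycle$ and apply Lemma~\ref{lem:verify-thm-for-HDV-NHDV-only} componentwise, using additivity of $\gap$ over components. It is cleaner to use the claim \eqref{eq:gap_inductive} from its proof, which applies since $G$ has no isolated vertices. It suffices to show that for every $x\in V(G)$ there is $v\in V(\Gamma)$ with $\gap(G(x))\le\gapzvezdica(\Gamma(v))$; then $\gap(G)-1$-type bounds finish. Since all weights are $0$, Remark~\ref{rem:gamma-v} and Lemma~\ref{lem:zetaGamma-v} give $\gapzvezdica(\Gamma(v))=\gap(\zeta(\Gamma)(v))=\gap(G(v))$ for $v\in V(\Gamma)\subseteq V(G)$. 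So original vertices are handled directly, and only the subdivision vertices need work.

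Because all weights are $0$, every subdivision vertex $x$ lies on a path $u-x_1-x_2-w$ with $x_1,x_2$ of degree $2$ and $u,w\in V(\Gamma)$, possibly $u=w$. Say $x=x_1$. In $G(x_1)$, the vertex $x_2$ becomes a leaf with neighbour $w$. By Proposition~\ref{prop:from KBS24}~\eqref{prop:2.1(1)}, $\gap(G(x_1))=\gap(G(\{x_1,x_2,w\}))$. On the other hand, $G(w)$ contains the pendent path $u-x_1-x_2$ with leaf $x_2$. Removing $x_2$ and $x_1$ by Proposition~\ref{prop:from KBS24}~\eqref{prop:2.1(1)} gives $\gap(G(w))=\gap(G(\{w,x_1,x_2\}))$, the same graph. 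Hence $\gap(G(x_1))=\gap(G(w))=\gapzvezdica(\Gamma(w))$.

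The main obstacle is the degenerate cases. The first is a loop ($u=w$): then $G(x_1)$ and $G(w)$ must still be compared, and the leaf-removal steps above remain valid since $u=w$ is the vertex removed. The second is when $u$ or $w$ is itself a leaf of $\Gamma$, so that removing it leaves small path components. These I would check by hand using $\gap(P_n)$ from Proposition~\ref{prop:from KBS24}~\eqref{prop:2.1(3)}. I expect each case to reduce to the same identity with care about which endpoint to choose.
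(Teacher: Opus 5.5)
Your proof is correct and rests on the same ingredients as the paper's proof. Both pass to $G=\zeta(\Gamma)$ and use the vertex-removal recursion for simple graphs in $\GnoFourCycle$. Both identify $\gap(G(v))$ with $\gapzvezdica(\Gamma(v))$ for $v\in V(\Gamma)$ via Lemma~\ref{lem:zetaGamma-v}. Both use leaf removal (Proposition~\ref{prop:from KBS24}~\eqref{prop:2.1(1)}) to trade the removal of a subdivision vertex for the removal of a vertex of $\Gamma$.

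The difference is which recursion you invoke. The paper applies Lemma~\ref{lem:verify-thm-for-HDV-NHDV-only} with its restricted set $\mc S$. That lemma needs $\Gamma$ connected, so the paper must:
\begin{itemize}
\item treat the degree-two vertices of $\mc S$ through the even-parity walk $w_1-\pathk{v}{w_2}{r}$;
\item add a separate component-by-component argument for disconnected $\Gamma$, splitting off the looped single vertices $sK_1^\circ$.
\end{itemize}
You use the unrestricted claim \eqref{eq:gap_inductive}, which holds without connectivity and without $\mc S$. You map every subdivision vertex $x_1$ on $u-x_1-x_2-w$ to the far endpoint $w$, obtaining $\gap(G(x_1))=\gap(G(\{x_1,x_2,w\}))=\gap(G(w))=\gapzvezdica(\Gamma(w))$. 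This treats all multigraphs uniformly, including single vertices carrying several $0$-loops. It also states both inequalities explicitly, whereas the paper's identification of the two maxima is left somewhat implicit. The paper's refinement only buys a smaller set of vertices to test, which plays no role in this theorem.

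The degenerate cases in your last paragraph need no separate hand check. The two leaf removals you perform are valid whatever the degrees of $u$ and $w$ are, including $u=w$ and $w$ a leaf of $\Gamma$:
\begin{itemize}
\item in $G(x_1)$, remove the leaf $x_2$ together with its neighbour $w$;
\item in $G(w)$, remove the leaf $x_2$ together with its neighbour $x_1$.
\end{itemize}
Nothing about $\gap(P_n)$ is needed. Also, $\gap\ge 0$ holds for every graph, so the hypotheses $|V(\Gamma)|\ge 2$ and no isolated vertices matter only for applying \eqref{eq:gap_inductive}, not for the lower bound.
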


\begin{proof}
     Suppose first that $\Gamma$ is connected. Then $G:=\zeta(\Gamma)$ is connected and $V(\Gamma) \subseteq V(G)$. 
     By definition we have $\gapzvezdica(\Gamma) = \gap(G)$. By Lemma~\ref{lem:verify-thm-for-HDV-NHDV-only} we have
     $$\gap(G)=\max\{0,\max_{v\in \mc S}\{\gap(G(v))-1\}\},$$
     where $\mc S$ is as defined in the lemma. 
     Since all edges in $\Gamma$ are of weight $0$, we can describe $G(v)$   for $v \in V(\Gamma)$ in terms of $\zeta(\Gamma(v))$ as follows: the graph $G(v)$ is isomorphic to the graph $\zeta(\Gamma(v))$ with added pendent  $P_2$ for each nonloop edge in $\Gamma$ incident to $v$ and added disjoint $P_2$ for each loop on $v$. By Proposition~\ref{prop:from KBS24}~(\ref{prop:2.1(1)}) it follows that 
     $$\gapzvezdica(\Gamma(v)) = \gap(\zeta(\Gamma(v))) = \gap(G(v)).$$
     Let $v$ be any vertex in $\mc S$. If $\deg(v)\neq 2$, then $v \in V(\Gamma)$. If $\deg(v)=2$, then there exists an induced walk  $w_1-\pathk{v}{w_2}{r}$ in $G$ with $\deg(w_i)\ne 2$ for $i\in[2]$, and $r\geq 0$ even. 
     Let $\pathk{v}{w_2}{r} =  v-u_1-\ldots-u_r-w_2$. 
     By construction of $G$ we have $r=1$ or $u_2 \in V(\Gamma)$. Since $r$ is even, it follows that $u_2 \in V(\Gamma)$. Now
     $$\gap(G(v))=\gap(G(\{v,u_1, u_2\}))=\gap(G(u_2)) = \gapzvezdica(\Gamma(u_2)),$$
     which implies that 
     $$\max_{v\in V(\Gamma)}\{\gapzvezdica(\Gamma(v))\} = \max_{v\in S}\{\gap(G(v))\} $$
     and the result follows. 

     Suppose now that $\Gamma = \Gamma_1 \cup \ldots \cup \Gamma_t \cup sK_1^\circ$, where the graphs $\Gamma_1, ..., \Gamma_t$ are connected and have at least $2$ vertices.
     Then 
     $$\gapzvezdica(\Gamma_i)=\max\{0,\max_{v\in V(\Gamma_i)}\{\gapzvezdica(\Gamma_i(v))-1\}\}$$
     for every $i \in [t]$, as proved above. 
     
     If $\gapzvezdica(\Gamma_i) = 0$ for all $i \in [t]$, then $\gapzvezdica(\Gamma(v)) \le 1$ for all $v\in V(\Gamma)$ and the claim follows. Assume now there exists a $j$ such that $\gapzvezdica(\Gamma_j)=\max_{v\in V(\Gamma_j)}\{\gapzvezdica(\Gamma_j(v))-1\}$. Then
     there exists $u \in \Gamma_j$ such that $\gapzvezdica(\Gamma_j) = \gapzvezdica(\Gamma_j(u))-1$. It follows that 
     $$\gapzvezdica(\Gamma(u)) = \sum_{i=1}^t \gapzvezdica(\Gamma_i) + 1 = \gapzvezdica(\Gamma)+1$$
    and so $\gapzvezdica(\Gamma) \le \max_{v\in V(\Gamma)}\{\gapzvezdica(\Gamma(v))-1\}$.
     On the other hand,  $\gapzvezdica(\Gamma) \ge \gapzvezdica(\Gamma(v))-1$ for all $v \in V(\Gamma)$, by Lemma~\ref{lem:gapzvezdica-one-vertex-removed}, so 
     $$\gapzvezdica(\Gamma)=\max_{v\in V(\Gamma)}\{\gapzvezdica(\Gamma(v))-1\},$$
     and the claim follows.
\end{proof}

\begin{example}\label{ex:P200}
To complete the computation of $\gapzvezdica(\Gamma)$ considered in 
Example~\ref{ex:big-example-algorithm}  we need to compute $\gapzvezdica(\Gamma_0^{\circ,\circ})$, where $\Gamma_0^{\circ,\circ}$  is the output graph in Figure~\ref{fig:algorithm-example}, i.e. a weighted multigraph obtained from $P_2^{\circ,\circ}$ (as in Figure~\ref{fig:P2-OO}) by assigning weight $0$ to all three edges.
Let $V(P_2^{\circ,\circ})=V(\Gamma_2^{\circ,\circ})=\{u,v\}$. 

Since $\Gamma_2^{\circ,\circ}(u)$ is isomorphic to a vertex with a loop of weight $0$, we have $\gapzvezdica(\Gamma_2^{\circ,\circ}(u))=\gap(C_6)=0$. Using symmetry  and Theorem~\ref{thm:gapzvezdica-one-vertex-removed} it follows that $\gapzvezdica(\Gamma_2^{\circ,\circ})=\max\{0,-1,-1\}=0$.
    Therefore, by Example~\ref{ex:big-example-algorithm} we obtain
    $$\gapzvezdica(\Gamma)=\gapzvezdica(\Gamma_2^{\circ,\circ})+4=4.$$

    Since $P_2^{\circ,\circ}\notin\GnoFourCycle$ (when viewed as a simple graph with loops), Lemma~\ref{lem:verify-thm-for-HDV-NHDV-only} cannot be used to compute its gap. However, the vertices $u$ and $v$ are twins, so by Proposition~\ref{prop:from KBS24}(\ref{prop:2.1(2)}) $\gap(P_2^{\circ,\circ})=\gap(K_1^{\circ})+1=1$. As $\gapzvezdica(\Gamma_2^{\circ,\circ})\neq \gap(P_2^{\circ,\circ})$, this example highlights the difference between the two parameters. 
\end{example}

\section{Applications}\label{sec:app}

With the results of the previous two sections, we can compute $\gapzvezdica$ of any multigraph $\Gamma$. We conclude the paper with a few applications. 

We first compute $\gapzvezdica$ for small multigraphs and then for some concrete examples. 

\begin{proposition}\label{prop:at-most-6-vertices}
   Let  $\Gamma=(V(\Gamma),E(\Gamma),w_{\Gamma})$ be a weighted multigraph with binomial weights.
  If every component of $\tau(\Gamma)$ has at most six vertices 
  then  $$\gapzvezdica(\Gamma)=t(\Gamma).$$

    In particular, if $\tau(\Gamma) = \Gamma$ and $|V(\Gamma)| \le 6$, then $\gapzvezdica(\Gamma) = 0$.
\end{proposition}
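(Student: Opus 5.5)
By Theorem~\ref{thm:gap-tau} we have $\gapzvezdica(\Gamma)=\gapzvezdica(\tau(\Gamma))+t(\Gamma)$, and $\gapzvezdica$ is additive over connected components. So the statement reduces to the following claim: if $\Psi$ is connected, $\tau(\Psi)=\Psi$ and $|V(\Psi)|\le 6$, then $\gapzvezdica(\Psi)=0$. Such a $\Psi$ is a simple graph, possibly with loops, all of whose weights are $0$ and all of whose vertices have degree at least $3$. The empty graph and the graph consisting of a single looped vertex are immediate; the latter gives $\gap(C_6)=0$. I would prove the claim by strong induction on $|V(\Psi)|$. The second assertion of the proposition is then the special case $\Gamma=\tau(\Gamma)$, for which $t(\Gamma)=0$.

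For the inductive step with $|V(\Psi)|\ge 2$, I apply Theorem~\ref{thm:gapzvezdica-one-vertex-removed}, which gives $\gapzvezdica(\Psi)=\max\{0,\max_v\{\gapzvezdica(\Psi(v))-1\}\}$. It therefore suffices to show $\gapzvezdica(\Psi(v))\le 1$ for every $v\in V(\Psi)$. Applying Theorem~\ref{thm:gap-tau} again gives $\gapzvezdica(\Psi(v))=\gapzvezdica(\tau(\Psi(v)))+t(\Psi(v))$. Every component of $\tau(\Psi(v))$ has at most $5$ vertices and is a fixed point of $\tau$, so the induction hypothesis gives $\gapzvezdica(\tau(\Psi(v)))=0$. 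The whole problem is thus reduced to the combinatorial bound $t(\Psi(v))\le 1$.

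To bound $t(\Psi(v))$, I trace the algorithm of Figure~\ref{fig:algorithm} on $\Psi(v)$. Since $\Psi$ is simple, deleting $v$ lowers each neighbour's degree by exactly one and leaves all other degrees unchanged. Hence $\Psi(v)$ has all degrees at least $2$, no leaves, no isolated vertices and no $1$-edges. In the first round this gives $t_1=t_2=t_3=0$, and $\tau_3$ only suppresses the degree-$2$ vertices, which are exactly the neighbours of $v$ of degree $3$ in $\Psi$. A looped degree-$2$ vertex with no other edges disappears at no cost (a $C_6$). An induced path through degree-$2$ vertices becomes a single edge whose weight is determined by the parity rule of Lemma~\ref{lem:removing-ver-deg-2}. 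From the second round on, the quantity $t$ can grow only through $t_1$, that is, from $1$-edges or $1$-loops in excess of spanning trees of the $1$-components, together with possible subsequent leaves and isolated vertices.

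The main obstacle is the case analysis showing that on at most $5$ vertices this process produces total $t\le 1$. I would organise it by $d=|N_\Psi[v]\setminus\{v\}|\le 5$ and by how many of these neighbours have degree $3$ in $\Psi$, since only those create new weights. I would use the degree-$\ge 3$ constraint on the at most $5$ remaining vertices to limit how many $1$-edges and $1$-loops can accumulate on a single $1$-component. I expect that two surplus $1$-edges would force a configuration with at least $7$ vertices. If the hand enumeration becomes unwieldy, a fallback is to look for a general inequality of the form $t(\Gamma)\le \max\{0,\text{excess}\}$ computed directly from $\zeta(\Psi(v))$.
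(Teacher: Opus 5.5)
Your reduction is sound as far as it goes. By Theorem~\ref{thm:gap-tau} and additivity it suffices to treat a connected $\Psi$ with $\tau(\Psi)=\Psi$ and $|V(\Psi)|\le 6$, and by Theorem~\ref{thm:gapzvezdica-one-vertex-removed} this amounts to showing $\gapzvezdica(\Psi(v))\le 1$ for every $v$. But the proposal stops exactly there. With your induction hypothesis, $\gapzvezdica(\Psi(v))=t(\Psi(v))$, so the bound you still need, $t(\Psi(v))\le 1$, is equivalent to the statement itself. For it you offer only an expectation (``two surplus $1$-edges would force at least $7$ vertices'') and a planned case analysis through all rounds of the algorithm: suppressed degree-$2$ vertices creating $1$-edges, later contractions, twin leaves, new isolated vertices. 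None of this is carried out, so the central step is missing. Tracking $t$ through the algorithm is also the hard way to get it.

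The missing idea is to apply Theorem~\ref{thm:gapzvezdica-one-vertex-removed} at three consecutive levels rather than once. You then finish with the trivial bound $\gapzvezdica(\Gamma')\le |V(\Gamma')|$ for zero-weight $\Gamma'$. This follows, e.g., by iterating the right inequality of Lemma~\ref{lem:st and gap inq} via Lemma~\ref{lem:zetaGamma-v}, since $\Gamma'_{-v}=\Gamma'(v)$ when all weights are $0$.

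The key observation is that $\Psi$ is simple with at most one loop per vertex and all degrees at least $3$. So every vertex carries a loop or has at least three distinct neighbours, and deleting any two vertices leaves no isolated vertex. Hence the theorem applies to $\Psi$, $\Psi(v)$ and $\Psi(\{v,u\})$ whenever they have at least two vertices. For $|V(\Psi)|=6$ this gives $\gapzvezdica(\Psi(\{u,v,w\}))\le 3$, hence $\gapzvezdica(\Psi(\{u,v\}))\le 2$, then $\gapzvezdica(\Psi(v))\le 1$, and finally $\gapzvezdica(\Psi)=0$. Fewer vertices work the same way.

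This needs no induction over fixed points of $\tau$ and no control of $t$ at all. It also explains the constant $6$: three levels where the theorem applies, plus three remaining vertices. The constant is sharp by $\Gamma_{4,3}$ in Example~\ref{ex:Kmn}.

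A minor slip: a vertex with a single $0$-loop has degree $2$, so it is not a fixed point of $\tau$. Also, $\zeta$ of it is $C_3$, not $C_6$.
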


\begin{proof} Using Theorem~\ref{thm:gap-tau} we may assume that $\tau(\Gamma) = \Gamma$ and $|V(\Gamma)| \le 6$.
    In particular, every vertex in $\Gamma$ has degree at least $3$, which implies that for any pair of vertices $u,v \in V(\Gamma)$, all vertices in  $\Gamma(\{u,v\})$ have degree at least $1$. This allows us to apply  Theorem \ref{thm:gapzvezdica-one-vertex-removed} to $\Gamma$, $\Gamma(u)$, and $\Gamma(\{u,v\})$. Suppose that $|V(\Gamma)| = 6$, and let $u, v, w \in V(\Gamma)$ be three distinct vertices. Since $|V(\Gamma(\{u,v,w\}))| = 3$, we have $\gapzvezdica(\Gamma(\{u,v,w\})) \le 3$. By Theorem \ref{thm:gapzvezdica-one-vertex-removed} $\gapzvezdica(\Gamma(\{u,v\})) \le 2$, $\gapzvezdica(\Gamma(u)) \le 1$, and $\gapzvezdica(\Gamma) =0$. Similarly, if $|V(\Gamma)| = 5$ ($|V(\Gamma)| = 4$), then $\gapzvezdica(\Gamma(\{u,v,w\})) \le 2$ ($\gapzvezdica(\Gamma(\{u,v,w\})) \le 1$), again implying $\gapzvezdica(\Gamma) =0$. Finally, assuming $|V(\Gamma)| = 3$ or $|V(\Gamma)| = 2$, we have $\gapzvezdica(\Gamma(u)) \le 1$ and the conclusion follows. 
\end{proof}

Application of Theorem \ref{thm:gapzvezdica-one-vertex-removed} simplifies for graphs with a large degree of symmetry. This is illustrated in the following examples. 

\begin{example} \label{ex:Petersen}
    Let $P_{5,2}$ be the Petersen graph. 
    Note that the multigraph $\Gamma_{5,2} = \chdv(P_{5,2})$ is a weighted Petersen graph with all edge weights $0$. 
    Since $\tau(\Gamma_{5,2})=\Gamma_{5,2}$, we proceed to compute $\gapzvezdica(\Gamma_{5,2})$ using Theorem~\ref{thm:gapzvezdica-one-vertex-removed}. 
    
    Using symmetries of $\Gamma_{5,2}$, we observe that removing any vertex $v$ from $\Gamma_{5,2}$ results in a multigraph isomorphic to $\Gamma'$ in Figure~\ref{fig:petersen-gamma'}.
    From $\tau(\Gamma')=\emptyset$ and $t(\Gamma')=0$ we get $\gapzvezdica(\Gamma')=0$ by Theorem~\ref{thm:gap-tau}. Therefore,   $$\gapzvezdica(\Gamma_{5,2}) = \max\{0, \gapzvezdica(\Gamma')-1\}=0$$ by Theorem~\ref{thm:gapzvezdica-one-vertex-removed}, and so  $\gap(P_{5,2}) = 0$. 
    \begin{figure}[h]
\centering
\subfigure[$\Gamma_{5,2}$]{\label{fig:petersen}\begin{tikzpicture}[scale=0.5]
\tikzset{
every node/.style={draw, circle, inner sep=2pt}
}
\path (0,4) edge (1,1) edge (-1,1);
\path (-3,3.1) edge (-1.7,2.9) edge (-1.5,0);
\path (3,3.1) edge (1.7,2.9) edge (1.5,0);
\path (-1,1) edge (1.7,2.9) edge (-1.5,0);
\path (1,1) edge (-1.7,2.9) edge (1.5,0);
\path (-1.7,2.9) edge (1.7,2.9);
\path (-1.5,0) edge (1.5,0);
\path (0,5.5) edge (-3,3.1) edge (3,3.1)  edge (0,4);
\node[fill=magenta] at (0,5.5) {}; 
\node[fill=black] at (0,4) {}; 
\node[fill=black] at (-3,3.1) {}; 
\node[fill=black] at (3,3.1) {}; 
\foreach \i in {(-1.5,0),(1.5,0),(-1.7,2.9),(1.7,2.9),(-1,1),(1,1)} {
    \node[fill=black] at \i {}; }
\end{tikzpicture}}
\qquad
\subfigure[$\Gamma'$]{\label{fig:petersen-gamma'}
\begin{tikzpicture}[scale=0.5]
\tikzset{
every node/.style={draw, circle, inner sep=2pt}
}
\path (0,4) edge (1,1) edge (-1,1);
\path (-3,3.1) edge (-1.7,2.9) edge (-1.5,0);
\path (3,3.1) edge (1.7,2.9) edge (1.5,0);
\path (-1,1) edge (1.7,2.9) edge (-1.5,0);
\path (1,1) edge (-1.7,2.9) edge (1.5,0);
\path (-1.7,2.9) edge (1.7,2.9);
\path (-1.5,0) edge (1.5,0);
\foreach \i in {(-1.5,0),(1.5,0),(-1.7,2.9),(1.7,2.9),(-1,1),(1,1)} {
    \node[fill=black] at \i {}; }
\phantom{ \node[fill=black] at (-4.2,0) {}; 
\node[fill=black] at (4.2,0) {}; }
\node[fill=black] at (0,4) {}; 
\node[fill=black] at (-3,3.1) {}; 
\node[fill=black] at (3,3.1) {}; 
\end{tikzpicture}}
 \subfigure[$\tau(\Gamma')$]{ 
\phantom{\begin{tikzpicture}[scale=0.5]
\tikzset{
every node/.style={draw, circle, inner sep=2pt}
}
\node[fill=black] at (-3,3.1) {}; 
\node[fill=black] at (3,3.1) {}; 
\end{tikzpicture}}}
\caption{Graph $\Gamma_{5,2}=\kappa(P_{5,2})$ is a weighted multigraph with all black edges of weight $0$. $\Gamma'$  is obtained by removing a pink vertex from $\Gamma$, and $\tau(\Gamma')$  is an empty graph.}
\label{fig:Petersen-and-tau-gamma}
\end{figure}
\end{example}

\begin{example}\label{ex:Kn}
Let $n \ge 2$ and $\Gamma_n$   be a complete graph $K_n$ (without loops) and with all edge weights equal to 0.  We prove that $\gapzvezdica(\Gamma_n) =0$ by induction on $n$.
    
If $n\leq 3$ we have $\gapzvezdica(\Gamma_2) =\gap(P_4)=0$ and $\gapzvezdica(\Gamma_3)=\gap(C_9)=0$. If
$n \ge 4$ then $\tau(\Gamma_n)=\Gamma_n$ and by induction hypothesis $\gapzvezdica(\Gamma_{n-1}) =0$. For any  $v\in V(\Gamma_n)$ we have $\Gamma_n(v) = \Gamma_{n-1}$, so that by Theorem~\ref{thm:gapzvezdica-one-vertex-removed}
    $$\gapzvezdica(\Gamma_n)=\max\{0,\gapzvezdica(\Gamma_{n-1})-1\} = 0.$$
 The value of  $\gap(K_n)$ can be found in  \cite[Lemma 4.11.]{KBS24}. 
\end{example}

We show in the next example that Proposition~\ref{prop:at-most-6-vertices} is no longer true if $\tau(\Gamma)$ has a connected component with more than six vertices. 

\begin{example} \label{ex:Kmn}
    Let $m, n \ge 1$, $m \ge n$, and  $\Gamma_{m,n}$  be a complete bipartite graph $K_{m,n}$ with all edge weights equal to 0. We claim: $\gapzvezdica(\Gamma_{m,n}) = m-n$, and prove this claim by induction on $m+n$. 
    
    Note,  
    $\tau_2(\Gamma_{m,1}) = \emptyset$ and $t_2(\Gamma_{m,1}) = m-1$, so $\gapzvezdica(\Gamma_{m,1}) = m-1$. In particular,  $\gapzvezdica(\Gamma_{1,1})=\gapzvezdica(\Gamma_2) = 0$.
    
    Assume $n > 1$. The induction hypothesis asserts $\gapzvezdica(\Gamma_{(m-1),n)}) = |m-1-n|$ and $\gapzvezdica(\Gamma_{m,(n-1)}) = m-(n-1)$. Let  $V(\Gamma_{m,n}) = M \cup N$ with $|M|=m$, $|N|=n$ and $E(\Gamma_{m,n}[M])=E(\Gamma_{m,n}[N])=\emptyset$.
    If $m=n$, then $\Gamma_{m,n}(v) \cong \Gamma_{m,(m-1)}$  for any  $v\in V(\Gamma_{m,n})$. Applying Theorem \ref{thm:gapzvezdica-one-vertex-removed} we conclude: 
    $$\gapzvezdica(\Gamma_{m,n})=\max\{0,\gapzvezdica(\Gamma_{m,(m-1)})-1\} =\max\{0,1-1\} = 0 = m-n.$$
    If $m>n$, then  $\Gamma_{m,n}(v) \cong \Gamma_{(m-1),n}$ for any  $v\in M$ and $\Gamma_{m,n}(v) \cong \Gamma_{m,(n-1)}$  for any  $v\in N$. An application of Theorem \ref{thm:gapzvezdica-one-vertex-removed} now completes the proof:
    \begin{align*}
        \gapzvezdica(\Gamma_{m,n}) &= \max\{0,\gapzvezdica(\Gamma_{(m-1),n})-1, \gapzvezdica(\Gamma_{m,(n-1)})-1\}\\
        &=\max\{0, (m-1-n)-1,(m-(n-1))-1\} = m-n.
    \end{align*}
   In particular, $|V(\Gamma_{4,3})|=7$ and $\gapzvezdica(\Gamma_{4,3}) = 1$.

   Note that $\gap(K_{m,n})=m+n-2$ by Proposition~\ref{prop:from KBS24}(\ref{prop:2.1(2)}).
\end{example}

\begin{example}\label{ex:Gamma12} Let $m\ge 5$ be an integer,  let $\Gamma_{m,3}$ be as defined in Example~\ref{ex:Kmn},  and $\Gamma^{\circ}_{m-4}$ a weighted complete multigraph with all the loops on $m-4$ vertices, and all weights equal to $0$. A proof by induction on $m$ (similar as in Example~\ref{ex:Kn}) establishes $\gapzvezdica(\Gamma^{\circ}_{m-4}) = 0$. 

Choose a vertex $u \in V(\Gamma_{m,3})$ of degree $m$, and a vertex $v \in V(\Gamma^{\circ}_{m-4})$, and define
$\Gamma$ to be a weighted multigraph on $2m-1$ vertices with $V(\Gamma) = V(\Gamma_{m,3}) \cup V(\Gamma^{\circ}_{m-4})$, $E(\Gamma) = E(\Gamma_{m,3}) \cup E(\Gamma^{\circ}_{m-4}) \cup \{\{u,v\}\}$, and all weights equal to $0$. 
The biggest independent set of $\Gamma$ is the set of all vertices in $\Gamma_{m,3}$ of degree $3$. This can be seen by noting that a vertex with a loop cannot be a member of an independent set, and that in a complete bipartite multigraph, any independent set is a subset of one of the parts. 
It follows that $\alpha(\Gamma) = m$  and thus $2\alpha(\Gamma)-|V(\Gamma)| = 2m - (2m-1) = 1$. Furthermore, $\Gamma(u) \cong \Gamma_{m,2} \cup \Gamma^{\circ}_{m-4}$, so 
$$\gapzvezdica(\Gamma(u)) = \gapzvezdica(\Gamma_{m,2}) + \gapzvezdica(\Gamma^{\circ}_{m-4}) = m-2$$
by Example~\ref{ex:Kmn}.
Hence, by Lemma~\ref{lem:gapzvezdica-one-vertex-removed} $\gapzvezdica(\Gamma) \ge \gapzvezdica(\Gamma(u)) - 1 = m-3$, and
$$\gapzvezdica(\Gamma) - (2 \alpha(\Gamma)-|V(\Gamma)|) \ge m-4.$$
as desired. This example shows that there can be an arbitrarily large difference between the left and the right-hand side in the inequality~\eqref{eq:gap-alpha-ineq}.
\end{example}

Next example is an application of Corollary~\ref{cor:removing-k-vertices} and offers a planar multigraph with all vertices of degree at least $3$, all weights $0$, and an arbitrarily large $\gapzvezdica$.

\begin{example}
Let  $\Gamma$ be a weighed graph with $3k+2$ vertices $$V(\Gamma)=\{x,y\} \, \cup \, \{z_{1,j},z_{2,j},z_{3,j}\colon j\in[k]\},$$
and $7k$ edges
\begin{align*}
E(\Gamma)=\{\{x,&z_{i,j}\}\colon i\in [3], j\in [k]\} \cup \{\{y,z_{i,j}\}\colon i\in \{1,3\}, j\in [k]\} \\
&\cup \, \{\{z_{1,j},z_{2,j}\},\{z_{2,j},z_{3,j}\}\colon j\in [k]\},
\end{align*}
all with weight $0$, see Figure~\ref{fig:planar-graph-arbitrary-large-gap*}.

\begin{figure}[htb]
    \begin{tikzpicture}
\node (x)  at (0,3) {}; 
\node (y)  at (0,-3) {}; 

 \foreach \i in {-6,-3,4} {
    \draw (y)--(\i,0)--(\i+2,0)--(y);
 } 

\foreach \i in {-6,-5,-4,-3,-2,-1,4,5,6} {
    \draw (x)--(\i,0);
    \node[fill=white] at (\i,0) {}; 
 } 
 
\node[rectangle,draw=none] at (0,3.3) {\small $x$};
\node[rectangle,draw=none] at (0,-3.5) {\small $y$};
\node[rectangle,draw=none] at (-6,-0.3) {\small $z_{1,1}$};
\node[rectangle,draw=none] at (-5,-0.3) {\small $z_{2,1}$};
\node[rectangle,draw=none] at (-4,-0.3) {\small $z_{3,1}$};
\node[rectangle,draw=none] at (-3,-0.3) {\small $z_{1,2}$};
\node[rectangle,draw=none] at (-2,-0.3) {\small $z_{2,2}$};
\node[rectangle,draw=none] at (-1,-0.3) {\small $z_{3,2}$};
\node[rectangle,draw=none] at (4,-0.3) {\small $z_{1,k}$};
\node[rectangle,draw=none] at (5,-0.3) {\small $z_{2,k}$};
\node[rectangle,draw=none] at (6,-0.3) {\small $z_{3,k}$};
\node[rectangle,draw=none] at (1,0) {$\cdots$};
\end{tikzpicture}

\caption{Planar graph $\Gamma$, $|V(\Gamma)|=3k+2$, with   $\gapzvezdica(\Gamma)\geq k-2$.}\label{fig:planar-graph-arbitrary-large-gap*}
\end{figure}

Notice that $\{z_{1,1}, z_{3,1}, \ldots, z_{1,k}, z_{3,k}\}$ is an independent set of cardinality $2k$, so $\alpha(\Gamma) \ge 2k$ and
thus $\gapzvezdica(\Gamma)\geq 4k-(3k+2)=k-2$
by Corollary~\ref{cor:removing-k-vertices}. Using techniques obtained in Sections~\ref{sec:tau} and~\ref{sec:vtx-removal} it is possible to show that $\gapzvezdica(\Gamma)= k-2.$
\end{example}

In Sections~\ref{sec:definition-kappa}--\ref{sec:vtx-removal} we developed methods that can be applied to graphs in $\GnoFourCycle$. However, in some special cases a subgraph of a graph $G\not\in\GnoFourCycle$ that is forbidden in $\GnoFourCycle$ can be removed while maintaining control over the gap. In particular, we may be able to reduce the computation of $\gap(G)$ to a graph in $\GnoFourCycle$. This is illustrated in the following example, where pendent $4$-cycles (cycles of the form $\pathk{w}{w}{3}$) are removed using  Proposition~\ref{prop:from KBS24}(\ref{prop:2.1(2)}).

 A subgraph of $G$ of the form $\pathk{w}{w}{k-1}$, $k\geq 3$,  for some $w\in \hdv(G)$ is a pendent cycle $C_k$ at $w$.
   Let $G$ be a graph with $s$ pendent $4$-cycles $G_i$, $i=1,\ldots,s$, at a vertex $w\in V(G)$. 
   Note that each $4$-cycle $G_i$ contains two twin vertices, both in $N_G[w]$, therefore 
   by Proposition~\ref{prop:from KBS24}~(\ref{prop:2.1(2)}), we have 
   $\gap(G) = \gap(G_1) +s$, where $G_1$ is obtained by removing one of the twins in each pair.
   Note tha $G_1$ contains $s$ pendent paths $P_2$ and by Proposition~\ref{prop:from KBS24}~(\ref{prop:2.1(1)}) it follows that
    \begin{equation}
        \label{eq:pendent-cycles}
        \gap(G)= \gap(G')+s,
    \end{equation}
    where $G'$ is obtained from $G_1$ by removing all pendent paths $P_2$. If $G' \in \GnoFourCycle$, then we can compute $\gap(G)$ using the methods developed in this paper. We offer such an example below.

\begin{example} Let $\clover(f,e,o)$, $f+e+o\geq 2$, be the set of graphs isomorphic to a vertex $w$ with $f$ pendent $4$-cycles, $e$ pendent cycles with an even number of vertices not equal to $4$ and  $o$ pendent cycles with an odd number of vertices. 
Moreover, let $\cloverNoFour(e,o):=\clover(0,e,o)=\clover(f,e,o) \cap \GnoFourCycle$ denote the set of clovers without four cycles.

For every $G\in \clover(f,e,o)$ let $G'$  be a graph obtained by removing all $f$ four cycles of $G$ and therefore $\gap(G)=f+\gap(G')$ by~\eqref{eq:pendent-cycles}.

If $e+o \ge 2$ then $G'\in\cloverNoFour(e,o)$.
The multigraph $\Gamma = \chdv(G')$ contains only one vertex $v$ with $e$ loops of weight $1$ and $o$ loops of weight $0$.
If $e\ge 1$, then $\tau_1(\Gamma)$ is empty and $t_1(\Gamma) = e-1$, so 
$$\gap(G') = \gapzvezdica(\Gamma) = e-1.$$
If $e=0$, then  $\tau_1(\Gamma)$ is a vertex with a 0-loop and $t_1(\Gamma) = 0$, so  $\tau(\Gamma)$ is empty and $t(\Gamma) = 0$, thus  
$$\gap(G') = \gapzvezdica(\Gamma) = 0.$$

If $e+o = 1$ then $G'$ is a cycle and $\gap(G') = 0$. Finally, if $e+o = 0$, then $G' \cong K_1$ and $\gap(G') = 1$.

Therefore
\begin{equation}\label{eq:clover}
    \gap(G)=\begin{cases}
        f+\max\{0,e-1\};  & \text{if } e+o \ge 1, \\
        f+1;  & \text{if } e+o =0.
    \end{cases}
\end{equation}
for every $G\in \clover(f,e,o)$.  
Note that~\eqref{eq:clover} extends to the case $f+e+o=1$. In this case, $G$ is isomorphic to a cycle and so $\gap(G)$ coincides with~\eqref{eq:clover}.
\end{example}

\section*{Acknowledgments}

The authors acknowledge financial support from the ARIS (Slovenian Research and Innovation Agency, research core funding No. P1-0222 and project J1-70034).

\bibliographystyle{plain}
\bibliography{references}

\end{document}